\definecolor{purp}{rgb}{0.73, 0.26, 0.7}
\newtheorem{theorem}{Theorem}[section]
\newtheorem{lemma}[theorem]{Lemma}
\newtheorem{corollary}[theorem]{Corollary}
\newtheorem{proposition}[theorem]{Proposition}
\newtheorem{fact}{Fact}
\newtheorem{definition}[theorem]{Definition}
\newtheorem{remark}[theorem]{Remark}
\newtheorem{example}[theorem]{Example}
\newtheorem{notation}{Notation}
\newcommand{\luk}{\L u\-ka\-si\-e\-w\-icz}
\newcommand{\FP}{\mathcal{F}_{\mathbb{P}}}
\newcommand{\rest}{\upharpoonright}
\newcommand{\supp}{{\rm supp}}
\newcommand{\FPL}{FP(\Pi, \textrm{\L}{_\Delta})}
\begin{document}


\title[States of free product algebras]{{\small Towards a probability theory for product logic: states, integral representation and reasoning}}

\author{Tommaso Flaminio}
\address{Department of Pure and Applied Sciences, University of Insubria. Via Mazzini 5, 21100 Varese, Italy. Email: {\tt tommaso.flaminio@uninsubria.it}}
\author{Lluis Godo}
\address{Artificial Intelligence Research Institute (IIIA - CSIC), Campus de la Univ. Autonoma de Barcelona s/n, 08193 Bellaterra, Spain. Email: {\tt godo@iiia.csic.es}}
\author{Sara Ugolini}
\address{Department of Computer Science, University of Pisa. Largo B. Pontecorvo, 3
56127 Pisa, Italy. Email: {\tt sara.ugolini@di.unipi.it}}

\begin{abstract}
The aim of this paper is to extend probability theory from the classical to the product t-norm fuzzy logic setting. More precisely, we axiomatize a generalized notion of finitely additive probability for 
product logic formulas,  called state,  and 
show that every state is the Lebesgue integral with respect to
a unique regular Borel  
probability measure.  
Furthermore, the relation between states and measures is shown to be one-one. 
In addition, we study geometrical properties of the convex set of states 
and show that extremal states, i.e., the extremal points of the state space, are the same as the truth-value assignments of the logic.
Finally,  we  axiomatize 
a two-tiered modal logic for probabilistic reasoning on product logic events and prove soundness and completeness with respect to probabilistic spaces, where the algebra is a free product algebra and the measure is a state in the above sense. 
\vspace{.2cm}

\noindent {\bf Keywords}. 
Probability theory; nonclassical events; free product algebras; states; Riesz representation theorem; regular Borel measures; two-tiered modal logics.
\end{abstract}





\maketitle
\section{Introduction}
In his monograph \cite{H98}, H\'ajek established the theoretical ground for a wide family of fuzzy (thus, many-valued) logics which, since then, has been significantly developed and further generalized,  
giving rise to a discipline that has been named Mathematical Fuzzy logic (MFL).  H\'ajek's approach consists in fixing the real unit interval as  standard domain to evaluate atomic formulas, while the evaluation of compound sentences only depends on the chosen operation which provides the semantics for the so called {\em strong conjunction} connective. His general approach to fuzzy logics is grounded on the observation that, if strong conjunction is interpreted by a continuous t-norm \cite{KMP},  then any other connective of a logic has a natural standard interpretation.

Among continuous t-norms, the so called  {\L}ukasiewicz, G\"odel and product t-norms play a fundamental role. Indeed, Mostert-Shields' theorem \cite{KMP} shows that a  t-norm is continuous if and only if it can be built from the previous three ones by the construction of ordinal sum. In other words, a t-norm is continuous if and only if it is an ordinal sum of  \L ukasiewicz, G\"odel and product t-norms. These three operations determine three different algebraizable propositional logics (bringing the same names as their associated t-norms), whose equivalent algebraic semantics are the varieties of MV, G\"odel and product algebras respectively.    

The first  generalization of probability theory to the nonclassical settings of t-norm based fuzzy logics in H\'ajek sense, is due to Mundici who, in 1995, introduced the notion of {\em state} for the class of MV-algebras ---the algebraic counterpart  \luk\ logic--- with the aim of capturing the notion of average degree of truth of a proposition,  \cite{Mu}.

In that paper, states are functions mapping an MV-algebra to the real unit interval $[0, 1]$, satisfying a normalization condition and the finite additivity law. Such functions suitably generalize the classical notion of finitely additive probability measures on Boolean algebras, in addition to corresponding to convex combinations of valuations of \luk\ propositional logic. However, states and probability measures were previously studied in \cite{BuKl} (see also \cite{BuKl2,Navara}) on {\L}ukasiewicz tribes ($\sigma$-complete MV-algebras of fuzzy sets) as well as on other t-norm based tribes with continuous operations. 

MV-algebraic states have been deeply studied in recent years, as they enjoy several important properties and characterizations (see \cite{FK15} for a survey).
One of the most important results  in that framework 
is Kroupa-Panti theorem \cite[\S10]{Mu12}, a representation result  showing that every state of an MV-algebra is the Lebesgue integral with respect to a regular Borel probability measure. 
Moreover, the correspondence between states and regular Borel probability measures is one-one.

Many attempts of defining suitable notions of state in different structures have been made (see for instance \cite[\S8]{FK15} for a short survey). In particular, in \cite{AGM1}, the authors 
provide a definition of state for the Lindenbaum algebra of G\"odel logic that corresponds to the integration of the $n$-place truth-functions corresponding to G\"odel formulas, with respect to Borel probability measures on the real unit cube $[0,1]^{n}$. Moreover, such states are shown to correspond to convex combinations of finitely many truth-value assignments. Similar results have been obtained for the case G\"odel logic expanded with Baaz-Monteiro operator $\Delta$ \cite{ABGV17}, and for the case of Nilpotent Minimum logic \cite{AG10}. 

The aims of this contribution are the following: (1) we will introduce and study 
 states for product logic --the remaining fundamental many-valued logic for which such a notion is still lacking-- (2) we will prove that our axiomatization results in characterizing Lebesgue integrals of truth-functions of product logic formulas  with respect to regular Borel probability measures, and (3) following similar lines to those of \cite{FG07,HGE}, we will axiomatize a modal expansion of \luk\ logic for probabilistic reasoning on events  described by formulas of product logic. 
  In more detail, we show that states of the Lindenbaum algebra of product logic over $n$ variables, i.e. the free $n$-generated product algebra, correspond, one-one, to regular Borel probability measures on $[0,1]^n$.\footnote{Note that, unlike Kroupa-Panti theorem, we do not deal with states of {\em any} product algebra but of finitely-generated free product algebras.}
Moreover, and quite surprisingly since in the axiomatization of states the product t-norm operation is only indirectly involved via a condition concerning double negation, 
we prove that every state belongs to the convex closure of product logic valuations. Finally, these results will allow us to introduce a suitable class of probabilistic-like models with respect to which the modal logic we will introduce in Section \ref{sec:logic} turns out to be sound and complete.

The paper is structured as follows. After this introduction, in Section \ref{sec:pre} we will recall the functional representation of the free $n$-generated product algebra $\mathcal{F}_{\mathbb{P}}(n)$, as presented in \cite{ABG} (see also \cite{CG}). We will easily prove that such functions, although they are not continuous, are indeed Borel measurable.  In particular,  from that functional representation of product logic functions, it follows that the domain $[0,1]^n$ of each such a function can be partitioned in
 locally compact and Hausdorff subsets of $[0,1]^{n}$, named $G_{\varepsilon}$ (with $\varepsilon$ varying in a certain set $\Sigma$, depending on the atoms of the boolean skeleton of $\mathcal{F}_{\mathbb{P}}(n)$). More precisely, each $G_{\varepsilon}$ is an $F_\sigma$ set since, in fact, it is a countable union of a family $\{G_{\varepsilon}^{q}\}_{q \in (0, 1] \cap \mathbb{Q}}$ of nested compact subsets of $[0,1]^{n}$, and hence it is a $\sigma$-locally compact set (see \cite[\S 1.11]{Ru}). Over each $G_{\varepsilon}$, the function is actually continuous. Moreover, any continuous function with domain one of the compact sets $G_{\varepsilon}^{q}$ can be uniformly approximated by linear combinations of the functions of $\mathcal{F}_{\mathbb{P}}(n)$ restricted to such subsets.

In Section \ref{sec:states}, we will axiomatize our notion of state of $\mathcal{F}_{\mathbb{P}}(n)$, and show its properties together with some examples. In particular, we will investigate states of the 1-generated free product algebra, and see how this analysis reflects into its spectral space. 

In Section \ref{sec:integral} we will prove our main result, that is to say, for every state $s$ of $\mathcal{F}_{\mathbb{P}}(n)$ there is a unique Borel probability measure $\mu$ on $[0,1]^{n}$ such that $s$ is the Lebesgue integral with respect to $\mu$, and viceversa, every such an integral operator is a state in our sense.
In Section \ref{sec:extremal}, we shall prove that 
the state space of $\mathcal{F}_{\mathbb{P}}(n)$ is  convex and closed. Thus, via Krein-Milman theorem (see for instance \cite{Goodearl}) every state is a convex combination of extremal ones. We will hence characterize the extremal states, proving that they coincide with the homomorphisms of $\FP(n)$ into $[0,1]$, that is to say,  product logic valuations. Thus, the state space results to be generated by the truth-value assignments of the logic.

Finally, Section \ref{sec:logic} is devoted to presenting a logic for probabilistic reasoning on many-valued events represented by formulas of product logic. For that formalism we will provide an (infinitary) axiomatization which is sound and complete with respect to a probabilistic-like semantics given by states of free product algebras.

For the sake of readability we moved some technical proofs in an appendix at the end of the paper.

\section{Product algebras and product functions}\label{sec:pre}
 In this section we are going to recall  some basic facts and preliminary notions about  product algebras. In particular we will focus on  free, finitely generated, product algebras and their functional representation, mainly reporting results from \cite{ABG}. We assume the reader to be familiar with standard notions of universal algebra and algebraic semantics for many-valued logics. For otherwise we point them to the standard monographs \cite{BS} and \cite{H98,Hand1} respectively. To start with, let us recall that a BL-algebra \cite{H98} is a bounded, integral and commutative residuated lattice  ${\bf A}=(A,\odot, \to, \wedge,\vee,0,1)$ which  satisfies the following equations:
\begin{itemize}
\item[] $(x\to y)\vee (y\to x)=1$ (prelinearity),
\item[] $x\odot(x\to y)=x\wedge y$ (divisibility).
\end{itemize}
In what follows we shall adopt the following abbreviations: $\neg x:=x\to 0$, for every $n\in \mathbb{N}$, $x^n:= x\odot\ldots\odot x$ ($n$-times).

A BL-algebra ${\bf A}$ is a {\em product algebra} if it further satisfies
\begin{center}
 $x\wedge \neg x=0$ and 
 $\neg\neg x\to((y\odot x\to z\odot x)\to (y\to z))=1$.
\end{center}
Product algebras form a variety which is denoted by $\mathbb{P}$. 

\begin{example}\label{ex:productAlgebras}
(1) Any Boolean algebra is a product algebra. Furthermore for every product algebra ${\bf A}$, the biggest Boolean subalgebra of ${\bf A}$, $\mathscr{B}({\bf A})$ has universe $\{x\in A\mid \neg\neg x=x\}$ (cf. \cite[Theorem 3.1(1)]{MU15}). The Boolean algebra $\mathscr{B}({\bf A})$ is called the {\em Boolean skeleton} of ${\bf A}$.
\vspace{.2cm}

\noindent(2) Endow the real unit interval $[0,1]$ with operations defined in the following manner: $x\odot y=x\cdot y$ (the usual product), $x\to y=1$ if $x\leq y$ and $x\to y=y/x$ otherwise, $x\wedge y=\min(x,y)$, $x\vee y=\max(x,y)$. Thus $[0,1]_\Pi =([0,1], \odot, \to, \wedge, \vee, 0,1)$ is a product algebra, known as the {\em standard} product algebra. Any non Boolean product algebra, such as $[0,1]_\Pi$, is generic for $\mathbb{P}$, i.e., $\mathbb{P}$ is generated as a variety by $[0,1]_\Pi$ \cite[Corollary 4.1.11]{H98}.

%
\end{example}
For every $n\in\mathbb{N}$, let $\FP(n)$ be the free product algebra over $n$ free generators. That is, since $[0,1]_\Pi$ is generic for $\mathbb{P}$, $\FP(n)$ is  isomorphic to the product subalgebra of $[0,1]^{[0,1]^n}$ generated by the projection maps (see \cite{ABG}).  Thus, every element of $\FP(n)$ can be regarded as a function $[0,1]^n\to[0,1]$ that  we shall call a {\em product function}.

We are going to recall the description of $\FP(n)$ as presented in \cite{ABG} of which we will also adopt the notation for the sake of uniformity. It is known 
that $\mathscr{B}(\FP(n))$, the Boolean skeleton of $\FP(n)$, coincides with the free Boolean algebra over $n$ generators. In particular, $\mathscr{B}(\FP(n))$ is finite and hence atomic. Thus, we can safely identify the set of atoms of $\mathscr{B}(\FP(n))$, say $at(n)$, with the set $\Sigma=\{1,2\}^n$ of strings $\epsilon = (\epsilon_{1}, \ldots, \epsilon_{n})$ of length $n$ over the binary set $\{1,2\}$ and adopt the same notation of \cite{ABG} without danger of confusion:
$$
at(n)=\{p_\epsilon\in \mathcal{F}_{\mathbb{B}}(n)\mid \epsilon\in \Sigma\},
$$ 
that is to say, for every $\epsilon=\langle\epsilon_1,\ldots, \epsilon_n\rangle\in \Sigma$, $p_\epsilon=\bigwedge_{i=1}^n \neg^{\epsilon_i}x_i$, where $x_1, \ldots, x_n$ are the free generators of $\FP(n)$, while $\neg^1=\neg$ and $\neg^2=\neg\neg$.
Thus, for every $\epsilon=\langle\epsilon_1,\ldots, \epsilon_n\rangle$, we define
$$
G_\epsilon=\{\langle t_1,\ldots, t_n\rangle\in [0,1]^n\mid t_i>0 \mbox{ if }\epsilon_i=2\mbox{ and }t_i=0\mbox{ if }\epsilon_i=1\}.
$$ 
The set $\{G_\epsilon\mid \epsilon\in \Sigma\}$  is then a
partition of $[0,1]^n$ (cf. \cite{ABG}).  

For instance, for $n=2$, we have $p_{(1,1)} = \neg x_1 \land \neg x_2,\, p_{(1,2)} = \neg x_1 \land \neg \neg x_2,\, p_{(2,1)} = \neg \neg x_1 \land \neg x_2,\, p_{(2,2)} = \neg \neg x_1 \land \neg \neg x_2$,  
while the following Figure \ref{figG} shows how $[0,1]^2$ is partitioned by $G_{(1,1)}$, $G_{(1,2)}$, $G_{(2,1)}$ and $G_{(2,2)}$. 

\begin{figure}[h]
\begin{center}
\includegraphics[scale=0.5]{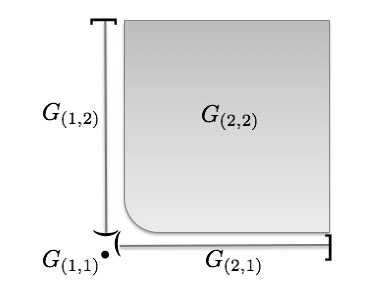}
\caption{The partition of $[0,1]^2$ into $G_{(1,1)}$, $G_{(1,2)}$, $G_{(2,1)}$ and $G_{(2,2)}$.}\label{figG}
\end{center}
\end{figure}

In what follows, for every function $f:[0,1]^n\to[0,1]$ and for every $\epsilon\in \Sigma$, we will denote by $f_\epsilon$  the restriction of $f$ to $G_\epsilon$, i.e. $f_\epsilon = f_{\rest G_\epsilon}$.
\begin{definition}
Let $n\in \mathbb{N}$ and $\mathscr{P}(n)$ be the set of functions $f:[0,1]^n\to[0,1]$ such that, for every $ \epsilon\in \Sigma$,  either $f_\epsilon=0$  or, if $f_\epsilon>0$ pointwise, it is continuous and piecewise monomial.
The pointwise application of the operations $\odot$, $\to$, $\wedge$ and $\vee$, together with the functions constantly $0$ and $1$ make $\mathscr{P}(n)$ into a product algebra that we still  denote by $\mathscr{P}(n)$ without danger of confusion.
\end{definition}

The functional representation theorem for free, finitely generated, product algebras then reads as follows:
\begin{theorem}[\cite{ABG}]\label{thm:repProd}
For every $n\inÊ\mathbb{N}$, $\FP(n)$ is isomorphic to $\mathscr{P}(n)$. Thus a function $f:[0,1]^n\to[0,1]$ is a product function iff $f$ is such that, for every $\epsilon\in \Sigma$, either $f_\epsilon=0$ or $f_\epsilon>0$ and it is continuous and  piecewise monomial. 
\end{theorem}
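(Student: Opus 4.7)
The plan is to establish the isomorphism as a pair of inclusions inside $[0,1]^{[0,1]^n}$, identifying $\FP(n)$---via the universal property and the fact that $[0,1]_\Pi$ is generic for $\mathbb{P}$---with the subalgebra generated by the projections $\pi_1,\ldots,\pi_n$. The inclusion $\FP(n)\subseteq\mathscr{P}(n)$ reduces to showing that $\mathscr{P}(n)$ is a product subalgebra of $[0,1]^{[0,1]^n}$ containing all projections. The projections lie trivially in $\mathscr{P}(n)$: restricted to $G_\epsilon$, the map $\pi_i$ is identically $0$ if $\epsilon_i=1$ (as $t_i=0$ there), and equals the strictly positive continuous monomial $t_i$ if $\epsilon_i=2$. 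Closure of $\mathscr{P}(n)$ under $\wedge,\vee,\odot$ is immediate on each stratum $G_\epsilon$, while for the residuum $\to$, whenever $f_\epsilon,g_\epsilon>0$ one checks that $f_\epsilon\to g_\epsilon$ equals $1$ on $\{f\le g\}$ and $g_\epsilon/f_\epsilon$ on $\{f>g\}$, the two pieces matching on the common boundary and producing again a continuous piecewise monomial (now allowing integer exponents of either sign).

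For the converse inclusion I would leverage the Boolean skeleton: each atom $p_\epsilon=\bigwedge_{i=1}^n\neg^{\epsilon_i}x_i$ is the characteristic function of $G_\epsilon$. Given $f\in\mathscr{P}(n)$, the task becomes to produce, for each $\epsilon\in\Sigma$ with $f_\epsilon\not\equiv 0$, a product-algebra term $\tau_\epsilon(x_1,\ldots,x_n)$ whose interpretation agrees with $f_\epsilon$ on $G_\epsilon$ and vanishes off $G_\epsilon$. Then $f=\bigvee_{\epsilon\in\Sigma}\tau_\epsilon$ lies in the subalgebra generated by the projections, and the vanishing off $G_\epsilon$ can be enforced by conjuncting with the Boolean idempotent $p_\epsilon$.

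The construction of $\tau_\epsilon$ on a fixed $G_\epsilon$ is the heart of the argument. A clean way to proceed is to pass to logarithmic coordinates $y_i=-\log t_i$ for $i\in S:=\{i:\epsilon_i=2\}$. Under this change of variables the operations $\odot,\to,\wedge,\vee$ translate into addition, truncated subtraction, minimum, and maximum on $[0,+\infty)^{|S|}$, and continuous piecewise monomials with integer exponents correspond precisely to continuous piecewise linear functions with integer slopes. A suitable unbounded version of McNaughton's theorem over $[0,+\infty)^{|S|}$ then supplies a term representation in the translated operations, which one pulls back through the exponential to recover $\tau_\epsilon$.

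The main obstacle I foresee is making the logarithmic transfer rigorous near the boundary of $G_\epsilon$: the logarithm sends $t_i\to 0$ to infinity, so one needs the piecewise linear representation on the unbounded orthant to faithfully reflect the continuity and the limiting values of $f_\epsilon$ as one approaches the missing faces where some coordinate vanishes. Once this unbounded McNaughton-type representation is secured, transporting it back to the original coordinates and assembling the pieces as $f=\bigvee_\epsilon(\tau_\epsilon\odot p_\epsilon)$ yields the desired representation and completes the isomorphism $\FP(n)\cong\mathscr{P}(n)$.
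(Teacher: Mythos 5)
First, note that the paper does not prove Theorem \ref{thm:repProd} at all: it is imported verbatim from \cite{ABG} (see also \cite{CG}), so there is no in-paper proof to match yours against. Your two-inclusion architecture --- realize $\FP(n)$ as the subalgebra of $[0,1]^{[0,1]^n}$ generated by the projections, check that $\mathscr{P}(n)$ is a product subalgebra containing the projections, and then exhibit each $f\in\mathscr{P}(n)$ as a term by working stratum-by-stratum over the $G_\epsilon$'s, using $p_\epsilon$ as the characteristic function of $G_\epsilon$ and a logarithmic change of variables to reduce to a piecewise-linear representation problem on the positive orthant --- is exactly the strategy of the cited source, so the overall shape of the argument is right.

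The genuine gap is that the entire mathematical content of the hard inclusion $\mathscr{P}(n)\subseteq\FP(n)$ is concentrated in the step you merely name: ``a suitable unbounded version of McNaughton's theorem over $[0,+\infty)^{|S|}$.'' This is the functional representation of the free cancellative hoop (equivalently, a Baker--Beynon-type theorem for the positive cone): every continuous function $[0,\infty)^{k}\to[0,\infty)$ that is \emph{finitely} piecewise homogeneous linear with \emph{integer} coefficients is a term function in addition, truncated subtraction, $\min$, $\max$. That statement is true but is precisely where all the work lives (a Schauder-hat or normal-form construction is needed), and invoking it without proof leaves the theorem unproved rather than reduced to something easier. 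Moreover, the obstacle you flag as the main difficulty is actually a non-issue: $t_i\mapsto -\log t_i$ is a homeomorphism of $(0,1]^{|S|}$ onto the \emph{closed} orthant $[0,\infty)^{|S|}$, so continuity of $f_\epsilon$ on $G_\epsilon$ transfers exactly, and the ``missing faces'' where some coordinate vanishes are sent to infinity, where nothing needs to be checked. Two smaller points: closure of $\mathscr{P}(n)$ under $\to$ also requires the cases where $f_\epsilon$ or $g_\epsilon$ is identically $0$ on a stratum (easy, but needed to preserve the dichotomy ``$f_\epsilon=0$ or $f_\epsilon>0$ everywhere''), and under $-\log$ the lattice operations swap ($\wedge$ becomes $\max$, $\vee$ becomes $\min$), which your dictionary as written gets backwards.
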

%
In the rest of this section we shall provide preparatory results about the
sets $G_\epsilon$. 
\begin{lemma}\label{lemma:borel}
For every $\epsilon \in \Sigma$, $G_\epsilon$ is a Borel subset of $[0,1]^n$, locally compact and Hausdorff.
\end{lemma}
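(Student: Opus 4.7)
The statement decomposes into three claims about $G_\epsilon$, and I would dispatch them in order of increasing subtlety.

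For the Borel property, I would simply rewrite the definition of $G_\epsilon$ as a finite intersection of sets determined by single coordinates. Let $I_1 = \{i : \epsilon_i = 1\}$ and $I_2 = \{i : \epsilon_i = 2\}$. Then
\[
G_\epsilon \;=\; \bigcap_{i \in I_1} \pi_i^{-1}(\{0\}) \;\cap\; \bigcap_{i \in I_2} \pi_i^{-1}((0,1]),
\]
where $\pi_i \colon [0,1]^n \to [0,1]$ is the $i$-th projection. Each $\pi_i^{-1}(\{0\})$ is closed and each $\pi_i^{-1}((0,1])$ is open in $[0,1]^n$, so $G_\epsilon$ is a finite intersection of Borel sets, hence Borel.

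The Hausdorff property is immediate: $[0,1]^n$ is Hausdorff, and Hausdorffness is hereditary to subspaces, so $G_\epsilon$ with the subspace topology is Hausdorff.

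The only point requiring care is local compactness. The key observation is that $G_\epsilon$ is locally closed in $[0,1]^n$: writing $F = \bigcap_{i \in I_1} \pi_i^{-1}(\{0\})$ and $U = \bigcap_{i \in I_2} \pi_i^{-1}((0,1])$, we have $G_\epsilon = F \cap U$ with $F$ closed and $U$ open in the compact Hausdorff space $[0,1]^n$. Invoking the standard fact that a locally closed subset of a locally compact Hausdorff space is locally compact in the subspace topology finishes the proof. If I wanted a self-contained argument instead of citing this fact, I would take $x = (t_1,\ldots,t_n) \in G_\epsilon$, choose $\delta > 0$ smaller than every $t_i$ with $i \in I_2$, and verify that the set
\[
K_x \;=\; \{0\}^{I_1} \times \prod_{i \in I_2} [t_i - \delta,\, t_i + \delta] \cap [0,1],
\]
suitably reindexed, is a compact neighborhood of $x$ inside $G_\epsilon$. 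The main (mild) obstacle here is just bookkeeping with the two index sets $I_1, I_2$; conceptually the argument is routine once one recognizes $G_\epsilon$ as homeomorphic to $(0,1]^{|I_2|}$, which is manifestly locally compact Hausdorff.
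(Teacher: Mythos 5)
Your proof is correct, but it reaches the Borel property by a different decomposition than the paper's. You write $G_\epsilon$ as a finite intersection $F\cap U$ of a closed set $F=\bigcap_{i\in I_1}\pi_i^{-1}(\{0\})$ and an open set $U=\bigcap_{i\in I_2}\pi_i^{-1}((0,1])$, which immediately gives Borel-ness and, via local closedness, also local compactness; this is the cleaner and more self-contained route if the lemma is read in isolation, and your explicit compact-neighborhood construction (or the homeomorphism $G_\epsilon\cong(0,1]^{|I_2|}$) supplies the local compactness argument that the paper merely asserts ``easily follows.'' The paper instead proves Borel-ness by exhibiting $G_\epsilon$ as the countable union $\bigcup_{q\in\mathbb{Q}\cap(0,1]}\prod_i B_i^q$ of closed (in fact compact) boxes, i.e.\ as an $F_\sigma$ set. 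That choice is not just a way to prove the lemma: the compact sets $G_\epsilon^q=\prod_i B_i^q$ introduced there are the backbone of the rest of the paper (the $\sigma$-compact exhaustion in Remark \ref{rem:nested}, the Stone--Weierstrass approximation in Proposition \ref{fact3}, and the entire Riesz-theorem machinery of Section \ref{sec:integral} all operate on the $G_\epsilon^q$). So your argument buys economy for this one statement, while the paper's buys the infrastructure it needs later; mathematically both are complete and correct.
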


\begin{proof} 
First of all, by definition,  $$G_\epsilon= \prod_{i=1}^n A_i, $$
where $A_i =(0, 1]$ if $\epsilon_i=2$ and $A_i = \{0\}$ if $\epsilon_i=1$. Hence, $G_\epsilon$ can also be expressed as the following countable union of closed sets in the product topology of $[0, 1]^n$, 
\begin{equation}\label{eqGe}
G_\epsilon = \bigcup_{q \in \mathbb{Q} \cap (0, 1]}  \prod_{i=1}^n B^q_i, 
\end{equation}
where $B^q_i = [q, 1]$ if $\epsilon_i=2$ and $B^q_i = \{0\}$ if $\epsilon_i=1$. Therefore  $G_\epsilon$ is a Borel subset of $[0,1]^n$. 
 It also easily follows that each $G_{\epsilon}$ is locally compact and Hausdorff.
\end{proof}
In the  proof of Lemma \ref{lemma:borel}  above, we showed that every $G_\epsilon$ is a countable union of compact subsets of $[0,1]^n$, through (\ref{eqGe}).
For the sake of a later use and a lighter notation, let us introduce the following.
\begin{notation}
For every $\epsilon\in \Sigma$ and for every $q\in (0,1]\cap\mathbb{Q}$, 
$$
G_\epsilon^q=\prod_{i=1}^n B_i^q.
$$ 
\end{notation}
\begin{remark}\label{rem:nested}
For every $\epsilon\in\Sigma$, the set $\{G_\epsilon^q\mid q\in (0,1]\cap \mathbb{Q}\}$ is a countable family of compact subsets of $G_\epsilon$ and 
$$
G_\epsilon=\bigcup_{q\in (0,1]\cap\mathbb{Q}} G_\epsilon^q .
$$ 
Therefore, each $G_\epsilon$ is $\sigma$-locally compact and Hausdorff (see \cite{Willard} for further details about $\sigma$-compact spaces).
Moreover, for each $q_1, q_2\in(0,1]\cap \mathbb{Q}$, $G_\epsilon^{q_1}\subset G_\epsilon^{q_2}$ iff $q_1>q_2$.  
\end{remark}

Given the previous result, we can easily prove what follows.
\begin{theorem}
Every $f \in \mathscr{P}(n)$ is measurable. 
\end{theorem}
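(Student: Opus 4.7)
The plan is to combine the finite Borel partition of $[0,1]^n$ given by the sets $G_\epsilon$ with the fact that $f$ is continuous on each piece of the partition. Concretely, recall from Theorem \ref{thm:repProd} that for every $\epsilon \in \Sigma = \{1,2\}^n$, the restriction $f_\epsilon = f \rest G_\epsilon$ is either identically zero or continuous and piecewise monomial, so in either case it is continuous as a function from the subspace $G_\epsilon$ (with the subspace topology inherited from $[0,1]^n$) into $[0,1]$.

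To verify Borel measurability of $f$, I would fix an arbitrary open set $U \subseteq [0,1]$ and show that $f^{-1}(U)$ is a Borel subset of $[0,1]^n$. By continuity of $f_\epsilon$, the preimage $(f_\epsilon)^{-1}(U) = f^{-1}(U) \cap G_\epsilon$ is open in the subspace $G_\epsilon$, so there exists an open set $V_\epsilon \subseteq [0,1]^n$ with $f^{-1}(U) \cap G_\epsilon = V_\epsilon \cap G_\epsilon$. Since $V_\epsilon$ is open and hence Borel, and $G_\epsilon$ is Borel by Lemma \ref{lemma:borel}, the intersection $V_\epsilon \cap G_\epsilon$ is Borel.

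Finally, since $\Sigma$ is finite (with $|\Sigma|=2^n$) and $\{G_\epsilon\mid \epsilon\in\Sigma\}$ partitions $[0,1]^n$, we obtain
$$
f^{-1}(U) = \bigcup_{\epsilon \in \Sigma} \bigl(f^{-1}(U)\cap G_\epsilon\bigr) = \bigcup_{\epsilon \in \Sigma} \bigl(V_\epsilon \cap G_\epsilon\bigr),
$$
a finite union of Borel sets, hence Borel. This establishes that $f$ is Borel measurable. There is no real obstacle here beyond stitching together Lemma \ref{lemma:borel} (Borelness of the pieces) with the piecewise continuity from Theorem \ref{thm:repProd}; the fact that $\Sigma$ is finite is what makes the argument entirely straightforward, since otherwise one would need to worry about countable versus finite unions, and that is already well within the Borel $\sigma$-algebra.
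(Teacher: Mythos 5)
Your proof is correct and takes essentially the same route as the paper's: both rest on the finite partition $\{G_\epsilon\}_{\epsilon\in\Sigma}$, the Borelness of each $G_\epsilon$ from Lemma \ref{lemma:borel}, and the continuity of $f$ restricted to each piece. The paper packages this as $f=\bigvee_{\epsilon}(f\wedge p_\epsilon)$, noting each $f\wedge p_\epsilon$ is continuous on a Borel set and zero outside (hence measurable) and that suprema of measurable functions are measurable, whereas you unwind the same fact directly at the level of preimages of open sets; the difference is purely presentational.
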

\begin{proof}
We can write each $f \in \mathscr{P}(n)$ as $ f = \bigvee_{\epsilon} (f \land p_{\epsilon})$, where the restriction of each $f \land p_{\epsilon}$ to $G_\epsilon$ is either $0$ or is a piecewise monomial function. By Lemma \ref{lemma:borel}, each $G_{\epsilon}$ is a Borel set. Thus, each $f \land p_{\epsilon}$ is continuous on a Borel set, and $0$ outside, hence it is Borel measurable. The supremum of measurable functions is measurable, thus the claim follows.
\end{proof}

Now, we need to introduce some more notation. For every $n\in \mathbb{N}$ and for every $\epsilon\in \Sigma$, let:
\begin{itemize}
\item[(1)] $\mathscr{P}_\epsilon(n)=\{f_\epsilon:G_\epsilon\to[0,1]\mid f\in \mathscr{P}(n)\}$,
\item[(2)] $\mathscr{L}_\epsilon(n)=\{g:G_\epsilon\to \mathbb{R}\mid (\exists\lambda_1,\ldots,\lambda_k\in \mathbb{R}\setminus\{0\}) \& 
(\exists f_{1,\epsilon}, \ldots, f_{k, \epsilon}\in \mathscr{P}_\epsilon(n)) \mbox{ such that } g=\sum_{i=1}^k\lambda_i f_{i,Ê\epsilon}\}$.
\end{itemize}
In other words, $\mathscr{P}_\epsilon(n)$ is obtained by restricting each function of $\mathscr{P}(n)$ to $G_\epsilon$, while $\mathscr{L}_\epsilon(n)$ is in fact the {\em linear span} of $\mathscr{P}_\epsilon(n)$ with nonzero coefficients. 

\begin{proposition}\label{prop:unique}
For every  $\epsilon \in \Sigma$ and every $g_\epsilon\in \mathscr{L}_\epsilon(n)$, either $g_\epsilon=0$ or $g_\epsilon$ is a  piecewise polynomial function. Moreover, in the latter case, 
$g_\epsilon$ is represented in a unique way as a linear combination of pairwise distinct $f_{1,\epsilon},\ldots,f_{k,\epsilon}\in \mathscr{P}_\epsilon(n)$ with non-zero coefficients.
\end{proposition}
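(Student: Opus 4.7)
The plan is to handle the two assertions in order, reducing the uniqueness part to a linear-independence statement about pairwise distinct nonzero elements of $\mathscr{P}_\epsilon(n)$.

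For the first assertion, if $g_\epsilon = \sum_{i=1}^k \lambda_i f_{i,\epsilon}$, I would first discard any $f_{i,\epsilon}$ that is identically zero (those contribute nothing), so that each remaining $f_{i,\epsilon}$ is continuous and piecewise monomial on $G_\epsilon$ by Theorem \ref{thm:repProd}. Taking the common refinement of the finitely many partitions of $G_\epsilon$ witnessing the piecewise-monomial structure of each surviving $f_{i,\epsilon}$, I obtain a finite partition of $G_\epsilon$ such that on every piece each $f_{i,\epsilon}$ is a single monomial $c_i x^{a_i}$. On each piece the linear combination $\sum_i \lambda_i c_i x^{a_i}$ is a polynomial in the variables indexed by $J_\epsilon = \{i : \epsilon_i = 2\}$, which proves $g_\epsilon$ is piecewise polynomial.

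For the second (uniqueness) assertion, suppose $g_\epsilon = \sum_{i=1}^k \lambda_i f_{i,\epsilon} = \sum_{j=1}^m \mu_j h_{j,\epsilon}$ with both families pairwise distinct and all coefficients nonzero. Collecting like terms in the difference yields a relation $\sum_{l=1}^L \nu_l u_{l,\epsilon} = 0$ with the $u_{l,\epsilon}$'s pairwise distinct elements of $\mathscr{P}_\epsilon(n)$ and every $\nu_l \neq 0$; concluding $L=0$ would force $k=m$ and the two representations to coincide up to reordering. Hence it suffices to prove that pairwise distinct nonzero elements of $\mathscr{P}_\epsilon(n)$ are linearly independent over $\mathbb{R}$.

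For that linear-independence statement, I would again pass to a common monomial refinement of $G_\epsilon$ for all the $u_{l,\epsilon}$'s appearing in a hypothetical relation. On each piece $P$ of the refinement, $\sum_l \nu_l c_l^P x^{a_l^P} = 0$ as polynomials on the open set $P$; grouping by multi-index, the linear independence of distinct monomials on an open subset of $G_\epsilon$ gives $\sum_{l : a_l^P = a} \nu_l c_l^P = 0$ for every multi-index $a$ occurring. The argument is then completed by induction on $L$: since the $u_{l,\epsilon}$ are globally distinct on $G_\epsilon$, some piece $P$ must isolate at least one $u_{l_0,\epsilon}$ in its monomial class (otherwise, two of the $u_{l,\epsilon}$'s would coincide on every piece, hence globally, contradicting distinctness), yielding $\nu_{l_0} = 0$ and reducing the relation to one with fewer terms.

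\textbf{Main obstacle.} The delicate point is the last inductive step: two globally distinct product functions can agree on any single piece of a given refinement, so the separation needed must be extracted from the whole refinement rather than from any individual piece. Making the ``some piece isolates $u_{l_0,\epsilon}$'' assertion rigorous is the technical heart of the argument, and I expect it to require a careful combinatorial bookkeeping of the monomial profiles of the $u_{l,\epsilon}$'s over all pieces of the refinement, or equivalently an extremal-choice argument (e.g., picking the function whose exponent tuple is maximal in a well-chosen order on some piece).
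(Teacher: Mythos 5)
Your treatment of the first assertion (pass to the common refinement of the monomial partitions, so that on each piece the linear combination is a polynomial) is exactly the paper's argument and is fine. The uniqueness part, however, has a genuine gap at precisely the point you flag. Your induction needs the claim that, for pairwise distinct nonzero $u_{1,\epsilon},\ldots,u_{L,\epsilon}\in\mathscr{P}_\epsilon(n)$, some piece of the common refinement isolates some $u_{l_0,\epsilon}$ in its monomial class. The parenthetical justification you offer --- ``otherwise two of the $u_{l,\epsilon}$'s would coincide on every piece, hence globally'' --- is a non sequitur once $L\geq 4$: the negation of the isolation claim only says that on each piece every function shares its monomial with \emph{some} other function, and which functions pair up may change from piece to piece. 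A coincidence pattern such as $\{1,2\}\{3,4\}$ on one piece, $\{1,3\}\{2,4\}$ on another and $\{1,4\}\{2,3\}$ on a third has no singleton class anywhere yet forces no fixed pair to coincide everywhere, so the contradiction you invoke does not arise. Such configurations are in fact obstructed by the continuity of the $u_{l,\epsilon}$'s across adjacent pieces, but showing this requires an actual argument; as submitted, the key step of the uniqueness proof is asserted rather than proved.

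For comparison, the paper does not induct at all: it refines the pieces of the two hypothetical representations simultaneously and argues that on each semialgebraic piece $Y$ one would get two polynomial representations of $g_\epsilon\rest_Y$, which it dismisses because polynomial functions have a unique coefficient representation on such sets. That route is terser but quietly faces the same issue you isolate --- distinct elements of $\mathscr{P}_\epsilon(n)$ may restrict to the \emph{same} monomial on a given piece, so local uniqueness of polynomial coefficients does not by itself match up the two index sets. Your reformulation as a linear-independence statement is therefore a reasonable and arguably more transparent organization, but to complete it you must either prove the isolation lemma (your suggested extremal choice of an exponent tuple maximal in a suitable order is a plausible route, but it has to be carried out, using continuity across piece boundaries) or prove linear independence of distinct elements of $\mathscr{P}_\epsilon(n)$ by some other means.
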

\begin{proof}
Assume $g_\epsilon\neq 0$. Then there is a $k\in \mathbb{N}$, $f_{1,\epsilon},Ê\ldots, f_{k,\epsilon}\in \mathscr{P}_\epsilon(n)$ and $\lambda_1,\ldots,\lambda_k\in \mathbb{R}\setminus\{0\}$ such that  $g_\epsilon=\sum_i \lambda_i\cdot f_{i,\epsilon}$. Each $f_{i,\epsilon}$ is piecewise monomial, meaning that there is a 
partition $\mathcal{P}_i=\{P_{1,i}, \ldots, P_{m_i, i}\}$ of $G_\epsilon$  such that the restricted function $f_{i,\epsilon}{\rest_{P_{j,i}}}$ is monomial. Let $\{Q_1,\ldots, Q_m\}$ be the refined partition of $G_\epsilon$ obtained by taking all possible non-void intersections of elements in the $\mathcal{P}_i$'s. Obviously, $f_{i,\epsilon}{\rest_{Q_j}}$ is monomial for all $Q_j$. 
Moreover, 
$$
g_\epsilon{\rest_{Q_j}}=\left(\sum_i \lambda_i f_{i,\epsilon}\right){\rest_{Q_j}}=\sum_i \lambda_i (f_{i,\epsilon}{\rest_{Q_j}}),
$$
whence $g_\epsilon{\rest_{Q_j}}$ is polynomial. Thus, the claim follows since a finite intersection of semialgebraic sets is semialgebraic, where a semialgebraic set is a set defined by Boolean combination of equalities and inequalities of real polynomials, and hence each $Q_j$ is semialgebraic. This shows that $g_\epsilon$ is piecewise polynomial. As to prove that $g_\epsilon$ is uniquely determined, assume by way of contradiction that there are two different sets $\{f_{1,\epsilon},Ê\ldots, f_{k,\epsilon}\}$ and $\{f'_{1,\epsilon},Ê\ldots, f'_{k',\epsilon}\}$ such that $g_\epsilon=\sum_i \lambda_i\cdot f_{i,\epsilon}=\sum_j \lambda'_j\cdot f'_{j,\epsilon}$. 
Let $P_1,\ldots, P_m$ and $P_1',\ldots, P'_{m'}$ be the semialgebraic sets on which, respectively, the $f_{i,\epsilon}$'s  and the $f'_{j,\epsilon}$'s are monomial. Let $Y$ be a semialgebraic set contained in $P_i\cap P'_j$ for some $i, j$. Thus $g_{\epsilon}$ restricted to $Y$ is polynomial, but $g_{\epsilon}$ is not uniquely displayed on $Y$, since $g_{\epsilon \rest_{Y}} = \sum_i \lambda_i\cdot f_{i,\epsilon \rest_{Y}} =\sum_j \lambda'_j\cdot f'_{j,\epsilon \rest_{Y}}$. But this is contradictory, since polynomial functions have a unique representation on semialgebraic sets. Thus, on each intersection $P_i\cap P'_j$, $g_{\epsilon}$ has a unique representation, and thus it is uniquely determined.
\end{proof}

For every $n\inÊ\mathbb{N}$, every $\epsilon\in \Sigma$ and every $q\in (0,1]\cap\mathbb{Q}$, we denote by $\mathscr{L}_\epsilon^q(n)$ the set of functions obtained by restricting those in $\mathscr{L}_\epsilon(n)$  to $G_\epsilon^q$. 
Further, for every subset $X$ of $[0,1]^n$, we denote by $\mathscr{C}(X)$ the set of all continuos functions from $X$ to $\mathbb{R}$. 


\begin{proposition}\label{fact3}
For every $\epsilon$, for every $q\in (0,1]\cap\mathbb{Q}$ and for every $c\in \mathscr{C}(G_\epsilon^q)$, there is a sequence $g_{1}, g_{2},\ldots\in \mathscr{L}^q_\epsilon(n)$ such that $g_{i}\leq c$ for every $i$, and $\{g_{i}\}$ uniformly converges to $c$.
\end{proposition}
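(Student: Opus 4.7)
The strategy is to invoke the Stone--Weierstrass theorem on the compact Hausdorff space $G_\epsilon^q$ to obtain uniform approximation of $c$ from $\mathscr{L}^q_\epsilon(n)$, and then shift the approximants down by a small positive constant to enforce the pointwise inequality $g_i\le c$. Concretely, once uniform density of $\mathscr{L}^q_\epsilon(n)$ in $\mathscr{C}(G_\epsilon^q)$ is established, I would pick for each $i\in\mathbb{N}$ some $h_i\in \mathscr{L}^q_\epsilon(n)$ with $\|h_i-c\|_\infty<1/i$ and set $g_i=h_i-(2/i)\cdot 1$; since the constant function $1$ lies in $\mathscr{L}^q_\epsilon(n)$, so does $g_i$. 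The shift guarantees $g_i(x)\le c(x)-1/i<c(x)$ everywhere on $G_\epsilon^q$, while $\|g_i-c\|_\infty\le 3/i$ delivers uniform convergence.

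The bulk of the work consists in verifying the hypotheses of Stone--Weierstrass for the family $\mathscr{L}^q_\epsilon(n)\subseteq \mathscr{C}(G_\epsilon^q)$. The inclusion into $\mathscr{C}(G_\epsilon^q)$ is immediate from the definition of $\mathscr{P}(n)$, since each $f_\epsilon$ is either identically zero or continuous on $G_\epsilon$, and linear combinations of continuous functions are continuous. A nonzero constant is provided by the unit $1\in\mathscr{P}(n)$. Point separation holds because two distinct points of $G_\epsilon^q$ must differ in some coordinate $i$ with $\epsilon_i=2$ (the remaining coordinates being pinned to $0$), and the projection $x_i\in\mathscr{P}(n)$ restricts on $G_\epsilon$ to a strictly positive monomial lying in $\mathscr{P}_\epsilon(n)\subseteq\mathscr{L}^q_\epsilon(n)$ that takes distinct values at those two points.

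The less transparent, and in my view main, step is closure of $\mathscr{L}^q_\epsilon(n)$ under pointwise multiplication, since by definition it is only a linear span. This reduces to showing that $\mathscr{P}_\epsilon(n)$ is itself closed under products, and here the key fact is that in the functional representation of $\mathcal{F}_{\mathbb{P}}(n)$ from Theorem \ref{thm:repProd} the strong conjunction $\odot$ is interpreted as the pointwise real product. Hence if $f,g\in\mathscr{P}(n)$ then $f\cdot g=f\odot g\in\mathscr{P}(n)$, and since restriction to $G_\epsilon$ commutes with pointwise operations, $f_\epsilon\cdot g_\epsilon\in\mathscr{P}_\epsilon(n)$. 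Closure of $\mathscr{L}^q_\epsilon(n)$ under products then follows by bilinearity, which completes the verification of the Stone--Weierstrass hypotheses and concludes the plan.
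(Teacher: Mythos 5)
Your proposal is correct and follows essentially the same route as the paper: both invoke Stone--Weierstrass on the compact set $G_\epsilon^q$ after checking that $\mathscr{L}^q_\epsilon(n)$ is a point-separating subalgebra of $\mathscr{C}(G_\epsilon^q)$ containing a nonzero constant. You are in fact more careful than the paper on two points it dismisses as trivial or leaves implicit --- the downward shift by $(2/i)\cdot 1$ to secure $g_i\le c$, and the choice of a coordinate projection with $\epsilon_i=2$ to separate points --- and both of these refinements are sound.
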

\begin{proof}
Since every $c$ is continuous and defined on a compact set, we can get the claim by the Stone-Weierstrass theorem \cite[\S VIII]{Doob} if we show  that $\mathscr{L}^q_\epsilon(n)$ is a subalgebra of $\mathscr{C}(G_\epsilon^q)$, i.e. it is a vector subspace of $\mathscr{C}(G_\epsilon^q)$ that is closed under multiplication of functions, $\mathscr{L}^q_\epsilon(n)$ contains a non-zero constant function and it separates the points. The first two claims are trivial. Thus, let us show that $\mathscr{L}^q_\epsilon(n)$ separates the points, i.e.,  for every $x,y\in G_\epsilon^q$, if $x\neq y$, then $g(x)\neq g(y)$ for some $g\in \mathscr{L}^q_\epsilon(n)$. Indeed, each monomial function $m$ defined on a subset of $[0,1]^n$ is strictly increasing and hence $m(x)\neq m(y)$ if $x\neq y$, whence the claim is settled. 
\end{proof}

\section{States of free product algebras}\label{sec:states}
Let us start introducing the main notion of our investigation, namely, states of free finitely generated product algebras.
\begin{definition}\label{def-State}
A {\em state of} $\FP(n)$ is a map $s:\FP(n)\to [0,1]$ satisfying the following conditions: 
\begin{itemize}
\item[S1.] $s(1)=1$ and $s(0)=0$,
\item[S2.] $s(f\wedge g)+s(f\vee g)=s(f)+s(g)$,
\item[S3.] If $f\leq g$, then $s(f)\leq s(g)$,
\item[S4.] If $f \neq 0$, then $s(f) = 0$ implies $s(\neg\neg f) = 0$.
\end{itemize}
\end{definition}

The following proposition shows some basic facts about states of free product algebras. Their proofs are straightforward and hence omitted.
\begin{proposition} For any state $s:\FP(n)\to [0,1]$ the following  hold: 
\begin{itemize}
\item[(i)] $s$ restricted to  $\mathscr{B}(\FP(n))$ is a finitely additive probability measure;
\item[(ii)] if $f \land g = 0$, $s(f\lor g)=s(f)+s(g)$. Thus, $s(f \lor \neg f) = s(f) + s(\neg f)$;
\item[(iii)] if $f \lor g = 1$, $s(f \land g)=s(f)+s(g)-1$. Thus, $s(f \leftrightarrow g) = s(f \to g) + s(g \to f) - 1$;
\item[(iv)] $s(\neg f) + s(\neg\neg f) = 1$.
\end{itemize}
\end{proposition}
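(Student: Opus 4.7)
The plan is to derive every clause from axioms S1 and S2 alone, invoking only two standard facts about product algebras: the defining identity $x \wedge \neg x = 0$, and prelinearity $(x \to y)\vee (y \to x)=1$. I will not need S3 or S4 for any of these four items.

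For (i), the key observation is that $\mathscr{B}(\FP(n))$ is a subalgebra of $\FP(n)$ whose meet and join coincide with those of $\FP(n)$, so finite additivity in the Boolean sense can be read off from S2. Concretely, for $a,b\in\mathscr{B}(\FP(n))$ with $a\wedge b=0$, apply S2 to obtain $s(a\vee b)+s(0)=s(a)+s(b)$, and use $s(0)=0$ from S1. Together with $s(1)=1$, this gives that the restriction is a finitely additive probability measure.

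For (ii) and (iii), I would simply specialize S2 to the hypothesized case and use S1. If $f\wedge g=0$, then S2 becomes $s(f\vee g)=s(f)+s(g)$; if $f\vee g=1$, then S2 becomes $s(f\wedge g)=s(f)+s(g)-1$. The concrete consequences then fall out by choosing the right pair: for the second half of (ii) take $g=\neg f$ and invoke $f\wedge\neg f=0$; for the second half of (iii) take $(f\to g, g\to f)$, invoke prelinearity $(f\to g)\vee(g\to f)=1$, and recall that $f\leftrightarrow g$ abbreviates $(f\to g)\wedge(g\to f)$.

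For (iv), I would apply (ii) and (iii) to the pair $(\neg f, \neg\neg f)$. Their meet is $0$: instantiating $x\wedge\neg x=0$ at $x=\neg\neg f$ gives $\neg\neg f\wedge\neg(\neg\neg f)=0$, and the standard BL-identity $\neg\neg\neg x=\neg x$ rewrites $\neg(\neg\neg f)$ as $\neg f$. Moreover $\neg\neg f\in\mathscr{B}(\FP(n))$, so $\neg f$ acts as its Boolean complement and hence $\neg f\vee\neg\neg f=1$. Therefore (iii) yields $s(\neg f)+s(\neg\neg f)=1$. The only mildly non-obvious point in the whole argument is recognizing that $\neg f$ and $\neg\neg f$ are Boolean complements of each other, which is an obstacle only in the sense of bookkeeping.
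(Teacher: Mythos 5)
Your proof is correct, and since the paper explicitly omits these proofs as straightforward, your derivation from S1--S2 together with the product-algebra identities $x\wedge\neg x=0$, prelinearity, and $\neg x\vee\neg\neg x=1$ (which holds in every product algebra since it holds in the generic algebra $[0,1]_\Pi$) is exactly the intended argument. The only step worth spelling out is the last identity, which you justify adequately via the Boolean skeleton.
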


\begin{remark}
It is worth  pointing out that states of a free product algebra are lattice valuations (axioms S1--S3) as introduced by Birkhoff in \cite{Bir}. However, if we compare Definition \ref{def-State} with the axiomatization of states of an MV-algebra \cite{Mu,FK15}, it is clear that, while for the  MV-case the monoidal operations are directly involved in the axiomatization of states, in our case the unique axiom that involves the multiplicative connectives of product logic is S4.

In the following Proposition \ref{prop:S4} we will prove that S4 can be equivalently substituted by the condition
\begin{itemize}
\item[(S4')] For every $\epsilon \in \Sigma$ and $f \in \FP(n)$, if $f \land p_{\epsilon} \neq 0$, then $s(f \land p_{\epsilon}) = 0$ implies $s(p_{\epsilon}) = 0$,
\end{itemize}
which involves the atoms of $\mathscr{B}(\FP(n))$  and does not make use of the negation connective $\neg$. It is also worth noticing that the condition (S4') quite closely resembles the condition (C4) of \cite{AGM1} where the authors axiomatized the integral on functions of free G\"odel algebras $\mathcal{F}_{\mathbb{G}}(n)$. To be more precise, the condition (C4) (see \cite[\S 2.2]{AGM1}) says the following: for every $x,y,z\in \mathcal{F}_{\mathbb{G}}(n)$ which are either join-irreducible or $0$, if $x<y<z$  and $s(x)=s(y)$, then $s(y)=s(z)$. Turning back to (S4'), if we take $0< f\land p_\epsilon< p_\epsilon$, then we get something similar to (C4). Indeed, if $0=s(0)=s(f\land p_\epsilon)$, we have that $s(p_\epsilon)=0$ as well, whence $s(f\land p_\epsilon)=s(p_\epsilon)$. 
\end{remark}

\begin{proposition}\label{prop:S4}
The axiom {\em S4} is equivalent to
\begin{itemize}
\item[S4'.] For every $\epsilon \in \Sigma$, if $f \land p_{\epsilon} \neq 0$, then $s(f \land p_{\epsilon}) = 0$ implies $s(p_{\epsilon}) = 0$.
\end{itemize}
\end{proposition}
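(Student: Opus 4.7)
The plan is to prove both implications by exploiting the functional representation of $\FP(n)$ (Theorem \ref{thm:repProd}) and the fact that, in this representation, $p_\epsilon$ is exactly the characteristic function of $G_\epsilon$. The crucial bridging identity, valid in $\FP(n)$, is
\[
\neg\neg(f\wedge p_\epsilon)=\begin{cases} p_\epsilon & \text{if } f\wedge p_\epsilon\neq 0,\\ 0 & \text{if } f\wedge p_\epsilon=0.\end{cases}
\]
This holds because the function $f\wedge p_\epsilon$ coincides with $f_\epsilon$ on $G_\epsilon$ and is $0$ outside; by Theorem \ref{thm:repProd}, if $f_\epsilon$ is not identically zero on $G_\epsilon$ then it is strictly positive pointwise there, so applying $\neg\neg$ turns it into the characteristic function of $G_\epsilon$, which is $p_\epsilon$ itself.

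For the direction \textbf{S4 $\Rightarrow$ S4'}, I would assume $f\wedge p_\epsilon\neq 0$ with $s(f\wedge p_\epsilon)=0$ and apply S4 to $f\wedge p_\epsilon$ to get $s(\neg\neg(f\wedge p_\epsilon))=0$. By the identity above, this is exactly $s(p_\epsilon)=0$.

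For the direction \textbf{S4' $\Rightarrow$ S4}, I would decompose $f$ via the atoms of the Boolean skeleton as $f=\bigvee_{\epsilon\in\Sigma}(f\wedge p_\epsilon)$, where the joinands are pairwise disjoint (since the $p_\epsilon$ are). Using item (ii) of the preceding proposition (additivity on disjoint joins), $s(f)=\sum_\epsilon s(f\wedge p_\epsilon)$; since $s(f)=0$ and all summands are nonnegative, $s(f\wedge p_\epsilon)=0$ for every $\epsilon$. Let $\Sigma_f=\{\epsilon\in\Sigma: f\wedge p_\epsilon\neq 0\}$. By S4', $s(p_\epsilon)=0$ for every $\epsilon\in\Sigma_f$. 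Using the identity above once more, $\neg\neg f=\bigvee_\epsilon\neg\neg(f\wedge p_\epsilon)=\bigvee_{\epsilon\in\Sigma_f}p_\epsilon$, again a disjoint join, and hence $s(\neg\neg f)=\sum_{\epsilon\in\Sigma_f}s(p_\epsilon)=0$.

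The main obstacle I anticipate is simply justifying the identity $\neg\neg(f\wedge p_\epsilon)=p_\epsilon$ cleanly; the rest reduces to finite additivity on disjoint atoms. This identity is immediate from the functional representation, but one could also derive it algebraically from the product algebra axioms (noting $\neg\neg p_\epsilon=p_\epsilon$ since $p_\epsilon\in\mathscr{B}(\FP(n))$) together with the cancellativity condition available in product algebras. Either route suffices; the functional one is the more transparent and is the one I would adopt.
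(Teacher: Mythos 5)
Your proof is correct and follows essentially the same route as the paper's: both directions hinge on the identity $\neg\neg(f\wedge p_\epsilon)=p_\epsilon$ for $f\wedge p_\epsilon\neq 0$ (justified via the functional representation, since $f_\epsilon>0$ pointwise on $G_\epsilon$ by Theorem \ref{thm:repProd}), and the converse direction uses the same decomposition $f=\bigvee_{\epsilon}(f\wedge p_\epsilon)$ into pairwise disjoint joinands together with finite additivity. The only cosmetic difference is that the paper writes the key identity as $\neg\neg(f\wedge p_\epsilon)=\neg\neg f\wedge p_\epsilon=p_\epsilon$, which is the same computation.
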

\begin{proof}
$(S4 \Rightarrow  S4')$. If $f \land p_{\epsilon} \neq 0$, then $s(f \land p_\epsilon) = 0$ implies, by $S4$, $s(\neg\neg (f \land p_{\epsilon})) = 0$ as well. But $\neg\neg (f \land p_{\epsilon}) = \neg \neg f \land \neg \neg p_{\epsilon} =  \neg \neg f \land p_{\epsilon}$. Since $f \land p_{\epsilon} \neq 0$,  it turns out that $f_\epsilon \neq 0$, and  by Theorem \ref{thm:repProd}, $f_\epsilon > 0$ on $G_\epsilon$, and hence $\neg\neg f_\epsilon = 1$, thus  $\neg \neg f \land p_{\epsilon} = p_\epsilon$. 
\vspace{.2cm}

$(S4' \Rightarrow S4)$. Let $f \neq 0$. Since 
$$
f = \bigvee_{\epsilon\in \Sigma} f \land p_\epsilon\mbox{, then }s(f) = \sum_{\epsilon\in \Sigma} s(f \land p_\epsilon).
$$ 
If $s(f) = 0$ then we have $s(f \land p_\epsilon) = 0$ for each $\epsilon$. Thus, by $S4'$, if $f \land p_{\epsilon} \neq 0$ we have $0 = s(p_\epsilon) = s(\neg\neg f \land p_\epsilon)$; otherwise, if   $f \land p_{\epsilon} = 0$, then $\neg\neg f \land p_{\epsilon} = 0$ as well.  Therefore, $s(\neg\neg f) = \sum_\epsilon s(\neg\neg f \land p_\epsilon) =0$. 
\end{proof}

In the next subsection we will investigate, as an example, the states of the free $1$-generated product algebra $\FP(1)$ with the aim of exhibiting a first representation for these functional in terms of measures on the dual side. 

\subsection{States of the free $1$-generated product algebra}

The free 1-generated product algebra $\FP(1) \subset [0, 1]^{[0, 1]}$ consists of one-variable functions $f$ of the form $f(x) = t(x)$, where the term $t(x)$ can be either $1$ (the constant function equal to 1), $0$ (the constant function equal to 0), $x$, $\neg x$, $\neg\neg x$, or it belongs to the following set: 
$$\{ x^n \mid n \in \mathbb{N}\} \cup \{x^n \lor \neg x \mid  n \in \mathbb{N}\} . $$ 
The lattice structure of $\FP(1)$ is depicted in Figure \ref{FP1}.

As we recalled in Section \ref{sec:pre}, the Boolean skeleton $\mathscr{B}(\FP(1))$ of $\FP(1)$ coincides with the free Boolean algebra over $1$ generator. Thus, in this case, $\Sigma=\{\epsilon_1, \epsilon_2\}$ and the two atoms of $\mathscr{B}(\FP(1))$ will be denoted by $p_1$ and $p_2$. Therefore, identifying terms with functions, the elements of $\mathscr{B}(\FP(1))$ are $1$, $0$, $\neg x=p_1$ and $\neg\neg x=p_2$ and the  partition $\{G_1, G_2\}$ of $[0, 1]$ is given by $G_1 = \{0\}$ and $G_2 = (0, 1]$.

Then, as it is easy to check, any map $s: \FP(1) \to [0, 1]$ satisfying the following conditions is a state: 

\begin{itemize}
\item $s(1) = 1$, $s(0) = 0$,
\item 
$s(\neg x) + s(\neg\neg x) = 1$,
\item either $s(\neg\neg x) = s(x) = s(x^n) = 0$ for all $n$, or all of them are positive,
\item $s(x^n) \leq s(x^m)$ whenever $n \geq m$,
\item $s(x^n \lor \neg x) = s(x^n) + s(\neg x)$. 
\end{itemize}
\begin{figure} [ht]
\begin{center}

\begin{tikzpicture}

  \tikzstyle{every node}=[draw, circle, fill=white, minimum size=4pt, inner sep=0pt, label distance=1mm]

    \draw (0,0) node (bot) [label=below: $0$] {}; 
    
    \draw (45:1cm) node(x2) [label=right:$x^2$] {}
   
        -- ++(45:1cm)	node (x) [label=right:$x$] {}
        -- ++(45:2cm)	node (notnot) [label=right:$\neg\neg x$] {}
        -- ++(135:2cm)	node (top) [label=above:$1$] {} 
        -- ++(225:2cm)  node (notor) [label=left:$x \lor \neg x$]{}
        -- ++(225:1cm)  node (notor2) [label=left:$x^2 \lor \neg x$]{};

\draw (135:2cm) node(not) [label=left:$\neg x$] {};

\draw (not)--(bot); 

\draw (x)--(notor); 

\draw (x2)--(notor2);

%
%
    \draw [dotted, very thick] (bot)--(x2); 
       \draw [dotted, very thick] (not)--(notor2); 
     
    \draw [dotted, very thick] (45:0.3cm)--(126:2cm);
    
    \draw [dotted, very thick] (45:0.6cm)--(118:2.1cm);
%
    
\end{tikzpicture}

\end{center}
\caption{\label{FP1} The lattice of the free product algebra with one generator $\FP(1)$}
\end{figure}
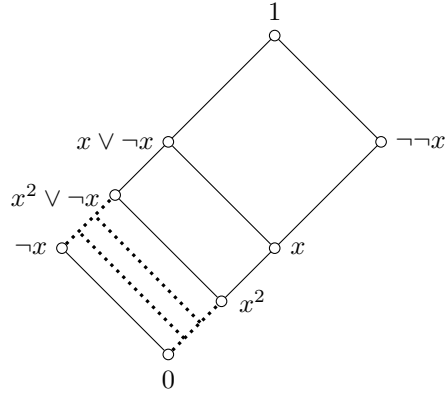


For every $y\in \FP(1)$, let $\langle y \rangle$ denote the principal lattice filter generated by $y$. 
The spectrum, denoted by $\mathfrak{P}$, of prime {\em lattice} filters of $\FP(1)$
, ordered by reverse inclusion, is as in Figure \ref{Spec1}. 
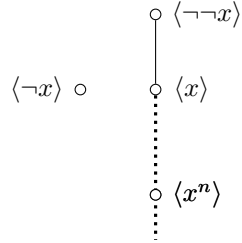
\begin{figure} [ht]
\begin{center}

\begin{tikzpicture}

  \tikzstyle{every node}=[draw, circle, fill=white, minimum size=4pt, inner sep=0pt, label distance=1mm]

    \draw (0,0) node (p1) [label=left: $\langle\neg x\rangle$] {}; 
     \draw (1,0) node (p2) [label=right: $\langle x\rangle$] {}; 
    \draw (1,1) node (p3) [label=right: $\langle\neg\neg x\rangle$] {}; 
         \draw (1,-1.4) node (p4) [ label=right: $\langle x^{n} \rangle$] {};  
    \draw (p2)--(p3); 
          \draw [dotted, very thick] (p2)--(1,-2);
            \draw (1,-1.4) node (p4) [ label=right: $\langle x^{n} \rangle$] {};  
\end{tikzpicture}

\end{center}
\caption{\label{Spec1} The spectral space $\mathfrak{P}$ of the lattice subreduct of the free product algebra with one generator $\FP(1)$}
\end{figure}

Notice that $\mathfrak{P}$ is partially ordered as follows: $\langle \neg x\rangle$ is incompatible with any other element of $\mathfrak{P}$; $\langle \neg\neg x\rangle\geq_\mathfrak{P} \langle x\rangle\geq_\mathfrak{P} \langle x^2 \rangle \geq_\mathfrak{P} \langle x^3 \rangle \geq_\mathfrak{P}\ldots$. 
Priestley duality for bounded distributive lattices \cite{Priestley}, provides us with a lattice isomorphism $R_{(\cdot)}$ between the lattice subreduct of $\FP(1)$ and the lattice of those {\em downsets} of $\mathfrak{P}$, which are clopen with respect to the usual spectral topology. However, since every downset of $\mathfrak{P}$ is clopen, $R_{(\cdot)}$ is onto the whole lattice of downsets of $\mathfrak{P}$. For every $x\in \FP(1)$, it is:
$$
R_x=\{\langle y \rangle\in \mathfrak{P}\mid x\in \langle y\rangle\}.
$$ 
In particular, we have that for instance, $R_{\neg \neg x}=\{\langle y\rangle\in \mathfrak{P}\mid \langle y\rangle\leq_{\mathfrak{P}}\langle\neg\neg x\rangle\}$ and $R_{\neg x\vee x^n}=\{\langle y_1\rangle\in \mathfrak{P}\mid \langle y_1\rangle\leq_{\mathfrak{P}}\langle\neg x\rangle\}\cup \{\langle y_2\rangle\in \mathfrak{P}\mid \langle y_2\rangle\leq_{\mathfrak{P}}\langle x^n\rangle\}$. 
%
%
%
(Fig.  \ref{Spec2} provides examples aimed at clarifying this correspondence). 
The following fact clearly holds:

\begin{fact} \label{claim:dist}
For every $z\in \FP(1)$, if $z=z_1\vee z_2$, then $R_{z}=R_{z_1}\cup R_{z_2}$ and $R_{z_1\wedge z_2}=R_{z_1}\cap R_{z_2}$. 
\end{fact}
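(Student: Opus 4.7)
The plan is to unfold the definition of $R_x$ and use the two defining properties of the elements of $\mathfrak{P}$, namely that each $\langle y\rangle$ is simultaneously a lattice filter and prime.

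For the join equality, by definition $R_{z_1\vee z_2} = \{\langle y\rangle \in \mathfrak{P} \mid z_1 \vee z_2 \in \langle y\rangle\}$. Since each $\langle y\rangle$ is a prime lattice filter, membership of $z_1\vee z_2$ in $\langle y\rangle$ is equivalent to $z_1 \in \langle y\rangle$ or $z_2 \in \langle y\rangle$: the reverse implication uses upward closure (since $z_1, z_2 \leq z_1\vee z_2$), while the forward implication is precisely the primeness condition. This gives $R_{z_1\vee z_2} = R_{z_1}\cup R_{z_2}$.

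For the meet equality, $R_{z_1\wedge z_2} = \{\langle y\rangle \in \mathfrak{P} \mid z_1 \wedge z_2 \in \langle y\rangle\}$, and since each $\langle y\rangle$ is a filter we have $z_1 \wedge z_2 \in \langle y\rangle$ iff $z_1 \in \langle y\rangle$ and $z_2 \in \langle y\rangle$ (closure under finite meets for $\Leftarrow$ and upward closure applied to $z_1\wedge z_2 \leq z_1, z_2$ for $\Rightarrow$). This yields $R_{z_1\wedge z_2} = R_{z_1}\cap R_{z_2}$.

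There is no real obstacle here; the statement is simply the observation that the Priestley representation $R_{(\cdot)}$ already mentioned in the preceding paragraph is a lattice homomorphism, which is immediate once one recalls the defining properties of prime lattice filters. The one point worth being explicit about is that we are working with \emph{lattice} filters and \emph{lattice} primeness, so the argument does not require any use of the monoidal operations $\odot, \to$ of $\FP(1)$.
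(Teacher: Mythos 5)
Your proof is correct and is exactly the argument the paper has in mind: the paper states this fact without proof (``clearly holds''), and your unfolding of $R_{(\cdot)}$ via upward closure, closure under meets, and primeness of the filters in $\mathfrak{P}$ is the standard justification that the Priestley map is a lattice homomorphism. Your closing remark that only the \emph{lattice} structure is used is accurate and consistent with the paper's restriction to the lattice subreduct of $\FP(1)$.
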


%

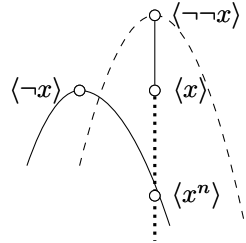
\begin{figure} [ht]
\begin{center}

\begin{tikzpicture}
\tikzstyle{every node}=[draw, circle, fill=white, minimum size=4pt, inner sep=0pt, label distance=1mm]

    \draw (0,0) node (p1) [draw, circle, label=left: $\langle\neg x\rangle$] {}; 
     \draw (1,0) node (p2) [draw, circle,label=right: $\langle x\rangle$] {}; 
    \draw (1,1) node (p3) [draw, circle,label=right: $\langle\neg\neg x\rangle$] {}; 
     \draw (1,-1.4) node (p4) [label=right: $\langle x^{n}\rangle$] {}; 
    \draw (p2)--(p3); 
    \draw (-0.7,-1) parabola bend (0,0)
    (1.2, -1.8);
        \draw[dashed] (0,-1) parabola bend (1,1)
    (2.2, -1.8);
        \draw [dotted, very thick] (p2)--(1,-2);
          \draw (1,-1.4) node (p4) [ label=right: $\langle x^{n} \rangle$] {};  
           \draw (0,0) node (p1) [draw, circle, label=left: $\langle\neg x\rangle$] {}; 
     \draw (1,0) node (p2) [draw, circle,label=right: $\langle x\rangle$] {}; 
\draw (1,1) node (p3) [draw, circle,label=right: $\langle\neg\neg x\rangle$] {}; 
\end{tikzpicture}

\end{center}
\caption{\label{Spec2} This figure shows 
the downsets $R_{\neg\neg x}$ (dashed parabola) and  $R_{\neg x\vee x^n}$ (continuous parabola)}
\end{figure}

With such a representation in mind, let $s$ be a state of $\FP(1)$ and let us define a $[0,1]$-valued function $d_s$ on  $\mathfrak{P}$ in the following way:
\begin{itemize}
\item[i.] $d_s(\langle \neg x\rangle)=s(\neg x)$,
\item[ii.] $d_s(\langle\neg\neg x\rangle)=s(\neg\neg x)-s(x)$, 
\item[iii.] for every $n\in \mathbb{N}$, $d_s(\langle x^n \rangle)=s(x^n)-s(x^{n+1})$.
\end{itemize}
First of all, let us show that $d_s$ is a {\em (discrete) probability distribution}
on $\mathfrak{P}$, indeed: 
$$
\begin{array}{lll}
\sum_{\langle y\rangle\in \mathfrak{P}}d_s(\langle y\rangle)&=&d_s(\langle \neg x\rangle)+d_s(\langle \neg\neg x\rangle)+d_s(\langle x\rangle)+\sum_{n>1} d_s(\langle x^n \rangle) \\
&=& s(\neg x)+s(\neg\neg x)-s(x)+s(x)-s(x^2)+\sum_{n>1} d_s(\langle x^n \rangle)\\
&=& s(\neg x)+s(\neg\neg x)-s(x^2)+s(x^2)-s(x^3)+\sum_{n>2} d_s(\langle x^n \rangle)\\
&=& s(\neg x)+s(\neg\neg x)-s(x^3)+\sum_{n>2} d_s(\langle x^n \rangle)\\
&=&\dots\\
&=& s(\neg x)+s(\neg\neg x)\\
&=& s(\neg x\vee \neg\neg x)\\
&=& s(1)\\
&=&1.
\end{array}
$$
Hence, every state of $\FP(1)$ determines a distribution function $d_s$ on $\mathfrak{P}$. Moreover, notice that the condition (S4) of Definition \ref{def-State} forces $d_s$ to satisfy the following further condition:\\
\begin{itemize}
\item[(D)] If $d_s(\langle y\rangle)=0$, then $d_s(\langle y'\rangle)=0$ for every $\langle y'\rangle\geq_{\mathfrak{P}} \langle y\rangle$.\\
\end{itemize}

\noindent Conversely, let $d:\mathfrak{P}\to[0,1]$ be a distribution satisfying (D) and define $s_d:\FP(1)\to[0,1]$ by the following stipulation: for every $z\in \FP(1)$, 
\begin{equation}\label{eq:statesDens}
s_d(z)=\sum_{\langle y\rangle\in R_z} d(\langle y\rangle).
\end{equation}
Let us show that $s_d$ is a state of $\FP(1)$. Obviously $s_d(1)=1$ and $s_d(0)=0$. As to prove additivity, let $z_1, z_2$ in $\FP(1)$. From Fact \ref{claim:dist}, $R_{z_1\vee z_2}=R_{z_1}\cup R_{z_2}$ and $R_{z_1\wedge z_2}=R_{z_1}\cap R_{z_2}$. Thus, 
$$
\begin{array}{lll}
s_d(z_1\vee z_2)&=& \sum_{\langle y\rangle\in R_{z_1\vee z_2}}d(\langle y\rangle)\\
&=& \sum_{\langle y\rangle\in R_{z_1}\cup R_{z_2}}d(\langle y\rangle)\\
&=&  \sum_{\langle y\rangle\in R_{z_1}} d(\langle y\rangle) + \sum_{\langle y'\rangle\in R_{z_2}}d(\langle y'\rangle)-\sum_{\langle y''\rangle\in R_{z_1}\cap R_{z_2}} d(\langle y''\rangle)\\
&=&  \sum_{\langle y\rangle\in R_{z_1}} d(\langle y\rangle) + \sum_{\langle y'\rangle\in R_{z_2}}d(\langle y'\rangle)-\sum_{\langle y''\rangle\in R_{z_1\wedge z_2}} d(\langle y''\rangle)\\
&=&  s_d(z_1)+s_d(z_2)-s_d(z_1\wedge z_2).
\end{array}
$$
The monotonicity of $s_d$  can be proved  in a similar manner observing that $z_1\leq z_2$ iff $R_{z_1}\subseteq R_{z_2}$.

Let us finally prove that (S4) is satisfied. The two atoms of $\mathscr{B}(\FP(1))$ are $\neg x$ and $\neg\neg x$ and for every $y\in \FP(1)$, either $y\wedge \neg x= \neg x$ if $y=\neg x$, or $y\wedge\neg x=0$ and in this case (S4) is trivially satisfied. As for $\neg\neg x$, let $y\in \FP(1)$ such that $y\wedge\neg \neg x\neq0$. Then, as it is evident from Figure \ref{FP1} (and skipping the trivial cases of $y=1$ and $y=\neg \neg x$) either $y=x^n$ for $n\geq 1$, or $y=\neg x\vee x^n$, for $n\geq 1$. In both cases of $y=x^n$ or $y=\neg x\vee x^n$, $\neg\neg x\wedge y=x^n$. Thus, if $s_d(y\wedge \neg\neg x)=s(x^n)=\sum_{\langle t\rangle\in R_{x^n}}d(\langle t\rangle)=0$, $d(\langle t\rangle)=0$ for all $\langle t\rangle\in R_{x^n}$. Notice that for any other $\langle a\rangle\in R_{\neg\neg x}$, one has $\langle a\rangle\geq_{\mathfrak{P}} \langle t\rangle$ for each $\langle t\rangle\in R_{x^n}$, whence by (D), $d(\langle a\rangle)=0$ ensuring that $s_d(\neg\neg x)=\sum_{\langle a\rangle\in R_{\neg\neg x}}d(\langle a\rangle)=0$. 

Thus, the following holds.
\begin{proposition}
There is a one-one correspondence between the set of states of $\FP(1)$ and the set of distribution functions on $\mathfrak{P}$ that satisfy (D). 
\end{proposition}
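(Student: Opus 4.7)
The plan is to establish that the two constructions $s \mapsto d_s$ and $d \mapsto s_d$ displayed in the text are mutually inverse. The discussion preceding the statement has already verified that $d_s$ is a distribution satisfying (D) whenever $s$ is a state, and that $s_d$ is a state whenever $d$ satisfies (D). So both maps land in the correct target, and it remains only to check that $d_{s_d}=d$ and $s_{d_s}=s$.

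For $d_{s_d}=d$ I would compute directly on each generator of $\mathfrak{P}$, using the explicit downsets: $R_{\neg x}=\{\langle \neg x\rangle\}$, $R_{x^n}=\{\langle x^k\rangle : k\geq n\}$ (in particular $R_x=\bigcup_{k\geq 1}\{\langle x^k\rangle\}$), and $R_{\neg\neg x}=R_x\cup\{\langle \neg\neg x\rangle\}$, all of which are evident from Figures \ref{Spec1}--\ref{Spec2}. Substituting into the definition of $d_{s_d}$ one gets, for example, $d_{s_d}(\langle x^n\rangle)=s_d(x^n)-s_d(x^{n+1})=\sum_{k\geq n}d(\langle x^k\rangle)-\sum_{k\geq n+1}d(\langle x^k\rangle)=d(\langle x^n\rangle)$, and analogously $d_{s_d}(\langle \neg\neg x\rangle)=s_d(\neg\neg x)-s_d(x)=d(\langle \neg\neg x\rangle)$ and $d_{s_d}(\langle \neg x\rangle)=s_d(\neg x)=d(\langle\neg x\rangle)$.

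For $s_{d_s}=s$ I would run through the (enumerable) elements of $\FP(1)$ displayed in Figure \ref{FP1}. The cases $z=0$ and $z=1$ are immediate from the normalization. For $z=\neg x$ we have $R_{\neg x}=\{\langle \neg x\rangle\}$, whence $s_{d_s}(\neg x)=d_s(\langle \neg x\rangle)=s(\neg x)$. For $z=x^n$, the formula (\ref{eq:statesDens}) and the definition of $d_s$ give the telescoping expression $s_{d_s}(x^n)=\sum_{k\geq n}\bigl(s(x^k)-s(x^{k+1})\bigr)$, which collapses to $s(x^n)$. The case $z=\neg\neg x$ reduces to $s_{d_s}(\neg\neg x)=d_s(\langle\neg\neg x\rangle)+s_{d_s}(x)=\bigl(s(\neg\neg x)-s(x)\bigr)+s(x)=s(\neg\neg x)$. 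Finally, for $z=\neg x\vee x^n$ the disjoint decomposition $R_{\neg x\vee x^n}=R_{\neg x}\sqcup R_{x^n}$ supplied by Fact \ref{claim:dist} and additivity of $s$ on disjoint elements (Proposition 3.2(ii)) yield $s_{d_s}(\neg x\vee x^n)=s(\neg x)+s(x^n)=s(\neg x\vee x^n)$.

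The delicate step is the telescoping argument for the $x^n$ case (and hence also the $\neg\neg x$ case), which requires $\lim_{k\to\infty}s(x^k)=0$; this is precisely the fact used implicitly in the text's computation showing $\sum_{\langle y\rangle\in\mathfrak{P}}d_s(\langle y\rangle)=1$, and both the well-definedness of the map $s\mapsto d_s$ into genuine distributions and the inversion identity $s_{d_s}=s$ ultimately rest on it. Once that point is granted, the case analysis above exhausts $\FP(1)$ and the two compositions are the identity, establishing the claimed bijection.
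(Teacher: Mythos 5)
Your proof is correct in substance but takes a genuinely different route from the paper's. The paper only proves injectivity of $s\mapsto d_s$, by a case analysis on an element where two states differ --- in effect recovering $s$ from $d_s$ top--down via the \emph{finite} partial sums $s(\neg x)=d_s(\langle\neg x\rangle)$, $s(x)=s(\neg\neg x)-d_s(\langle\neg\neg x\rangle)$, $s(x^{n+1})=s(x^n)-d_s(\langle x^n\rangle)$ --- and dismisses surjectivity as obvious from (\ref{eq:statesDens}). You instead check that the two assignments are mutually inverse: this makes the surjectivity half explicit (a real improvement), and your verification of $d_{s_d}=d$ is clean and unconditional. The cost is in the other composite: recovering $s(x^n)$ as the tail sum $\sum_{k\geq n} d_s(\langle x^k\rangle)=s(x^n)-\lim_k s(x^k)$ makes the identity $s_{d_s}=s$ literally equivalent to $\lim_k s(x^k)=0$, a fact you flag but leave ``granted''. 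Be aware that this does not follow from S1--S4 alone: the evaluation at the point $1$ is a state with $s(x^k)=1$ for every $k$. It is exactly the normalization $\sum_{\langle y\rangle\in\mathfrak{P}}d_s(\langle y\rangle)=1$ asserted (with the same hidden step) in the text preceding the proposition, so if you take as given that $d_s$ lands in the set of distributions, your telescoping is justified and the argument closes; but you should say this explicitly, since the whole weight of your $s_{d_s}=s$ direction rests on it. The paper's top--down injectivity argument sidesteps the limit entirely, which is the one respect in which its shorter proof is more robust than yours.
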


\begin{proof}
From what we showed above, we can define a map that associates a distribution $d_{s}$ to each state $s$. Thus, it is sufficient to prove that the map is injective, since surjectivity is obvious from (\ref{eq:statesDens}). To this end, let $s_1\neq s_2$ be two states of $\FP(1)$. Thus, there is a $y \in \FP(1)$ such that $s_1(y)\neq s_2(y)$. 
Now, if $y$ is one among $\{\neg x, \neg\neg x\} \cup \{x^{n} \mid n \in \mathbb{N}\}$, it is clear that $d_{s_{1}}(\langle y \rangle) \neq d_{s_{2}}(\langle y \rangle)$. If $y$ is of the kind $x^{n} \lor \neg x$, for $n \in \mathbb{N}$, since $\neg x \land x^{n} = 0$, using (S2) we obtain that $s_{1}(\neg x) + s_{1}(x^{n}) \neq s_{2}(\neg x) + s_{2}(x^{n})$. Therefore, either $s_1(\neg x)\neq s_2(\neg x)$, and thus $d_{s_{1}}(\langle \neg x \rangle) \neq d_{s_{2}}(\langle \neg x \rangle)$, or $s_1(x^{n})\neq s_2(x^{n})$, and thus $d_{s_{1}}(\langle x^{n} \rangle) \neq d_{s_{2}}(\langle x^{n} \rangle)$, which settles the proof.
\end{proof}

\section{Integral representation}\label{sec:integral}

As we recalled in Section \ref{sec:pre}, product functions are not continuous, thus, unlike the case of (free) MV-algebras, an integral representation for states cannot be obtained by directly applying Riesz representation theorem for linear and monotone functionals.\footnote{Recall that Riesz theorem says that, if X is a locally compact Hausdorff space, then for any positive linear functional $I$ on the space $\mathscr{C}_C(X)$ of continuous functions with compact support on $X$, there is a unique regular Borel measure $\mu$ on $X$ such that $I(f) = \int f {\rm d}\mu$, see e.g. \cite[Theorem 2.14]{Ru}.}
%
However, 
the {\em finite} partition $\{G_\epsilon\mid \epsilon\in \Sigma\}$ of $[0,1]^n$ is made of $\sigma$-locally compact sets (Remark \ref{rem:nested}) upon which the restriction $f_\epsilon$ of each product function $f$ is continuous. 
In this setting, we will suitably extend states to real-valued, positive, monotone and linear operators acting on all  continuous functions on the restricted compact domain.  Only then we will in position 
to apply Reisz representation theorem to obtain Borel measures over each $G_{\varepsilon}^{q}$, in such a way that the Lebesgue integral with respect to these measures will act exactly like our  properly restricted functionals. Finally, we will suitably extend the measures obtained by Riesz theorem first to measures on every $G_{\varepsilon}$, and secondly to a measure $\mu$ on the Borel subsets of real unit cube $[0,1]^{n}$. We will hence prove that the Lebesgue integral with respect to $\mu$ behaves like the state $s$ over the functions of $\mathcal{F}_{\mathbb{P}}(n)$.

Thanks to Proposition \ref{prop:unique}, given a state $s$ of $\FP(n)$, for every $\epsilon\in \Sigma$ we can define  a map $\tau_\epsilon:\mathscr{L}_\epsilon(n)\to \mathbb{R}$ in the  following way. Indeed, by Proposition \ref{prop:unique}, every $g\in\mathscr{L}_\epsilon(n) \setminus\{0\}$ is uniquely represented as a linear combination $\sum_{i=1}^k\lambda_i\cdot f_{i,\epsilon}$ for (uniquely determined) non-zero parameters $\lambda_1,\ldots,\lambda_{k}$ and distinct $f_{1,\epsilon},\ldots,f_{k,\epsilon}\in \mathscr{P}_\epsilon(n)$. Thus, we can properly define: 

\begin{equation}\label{eq:tauEpsilon}
\tau_\epsilon(g)= \tau_\epsilon\left(\sum_{i=1}^k\lambda_i\cdot f_{i,\epsilon}\right)=\sum_{\{i\mid s(f_{i,\epsilon}\wedge p_\epsilon)>0\}} \lambda_i\cdot \frac{s(f_{i,\epsilon}\wedge p_\epsilon)}{s(p_\epsilon)}.
\end{equation}

%

Notice that if, for some $i$, $s(f_{i,\epsilon}\wedge p_\epsilon)=0$,  (S4') ensures $s(p_{\epsilon})=0$ and hence $s(f_{j,\epsilon}\wedge p_\epsilon)=0$ as well for any other $j \neq i$, and in such a case, \eqref{eq:tauEpsilon} yields $\tau_\epsilon(g)= 0$ with the proviso that the empty sum is taken to be 0. 

The definition of $\tau_\epsilon$ is completed by putting $\tau_\epsilon(0) = 0$.

\begin{proposition}\label{prop:linear}
For every state $s$ of $\FP(n)$ and for every $\epsilon$, $\tau_\epsilon$ is a linear and monotone map. 
\end{proposition}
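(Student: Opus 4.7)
The plan is to verify linearity and monotonicity separately, building on the unique-representation theorem (Proposition \ref{prop:unique}) and the defining formula \eqref{eq:tauEpsilon}. As a preliminary, set $\sigma_\epsilon(f_\epsilon) := s(f\wedge p_\epsilon)/s(p_\epsilon)$ when $s(p_\epsilon)>0$, and $\sigma_\epsilon\equiv 0$ otherwise; this is consistent with \eqref{eq:tauEpsilon} because (S3) forces $s(f\wedge p_\epsilon)\le s(p_\epsilon)=0$ in the degenerate case, making the sum in \eqref{eq:tauEpsilon} empty. Moreover $\sigma_\epsilon$ is well-defined on restrictions: $f\wedge p_\epsilon$ depends only on $f\!\rest G_\epsilon$, since $p_\epsilon$ is the characteristic function of $G_\epsilon$. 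By Proposition \ref{prop:unique}, formula \eqref{eq:tauEpsilon} is thus the linear extension $\tau_\epsilon(g) = \sum_i \lambda_i \sigma_\epsilon(f_{i,\epsilon})$ applied to the unique expansion $g=\sum_i \lambda_i f_{i,\epsilon}$.

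For linearity, write $g$ and $h$ in their unique forms, say $g=\sum_i \lambda_i f_{i,\epsilon}$ and $h=\sum_j \mu_j f'_{j,\epsilon}$. The combination $\alpha g+\beta h$ is assembled over the joint list $\{f_{i,\epsilon}\}\cup\{f'_{j,\epsilon}\}$ of distinct basis elements by merging coefficients at coinciding terms and deleting zero ones to reach its own unique form. Zero-coefficient summands contribute nothing to \eqref{eq:tauEpsilon}, and $\sigma_\epsilon$ is evaluated separately on each distinct basis element, so $\tau_\epsilon(\alpha g+\beta h)=\alpha\tau_\epsilon(g)+\beta\tau_\epsilon(h)$ is immediate by rearranging a finite sum; no tool beyond Proposition \ref{prop:unique} is needed.

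For monotonicity, by linearity it suffices to show that $g\ge 0$ on $G_\epsilon$ implies $\tau_\epsilon(g)\ge 0$; the case $s(p_\epsilon)=0$ being trivial, assume $s(p_\epsilon)>0$. A basewise monotonicity of $\sigma_\epsilon$ on $\mathscr{P}_\epsilon(n)$ is automatic: if $f_\epsilon\le f'_\epsilon$, then $f\wedge p_\epsilon\le f'\wedge p_\epsilon$ in $\FP(n)$, and (S3) yields $\sigma_\epsilon(f_\epsilon)\le \sigma_\epsilon(f'_\epsilon)$.

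The main obstacle is that the unique representation of a non-negative $g\in\mathscr{L}_\epsilon(n)$ can mix signs --- already $g=x-x^2\ge 0$ on $G_2=(0,1]$ shows this in one variable --- so basewise monotonicity does not conclude on its own. I would bridge the gap by working on the compact subsets $G_\epsilon^q$ where $g$ is continuous: approximate $g$ uniformly from below by a sequence $g_k\in\mathscr{L}_\epsilon^q(n)$ via Proposition \ref{fact3}, and from above by elements in $\mathscr{P}_\epsilon(n)$ (after suitable scaling so that the relevant functions lie in $[0,1]$), then sandwich $\tau_\epsilon(g)$ between quantities controlled by basewise monotonicity. A uniform-continuity estimate for $\tau_\epsilon$ on the finite-dimensional subspaces spanned by the approximating basis functions --- derivable from $0\le\sigma_\epsilon\le 1$ together with a standard coefficient-versus-sup-norm bound --- allows one to pass to the limit first in $k$ and then in $q\to 0^+$, yielding $\tau_\epsilon(g)\ge 0$. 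I expect the bulk of the technical work to lie in this final uniform estimate, which is the genuine reason why positivity of $\tau_\epsilon$ is substantive rather than formal.
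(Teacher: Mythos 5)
Your treatment of linearity is correct and is essentially the paper's own argument (the paper dismisses that half with ``follows by the very definition''); your extra care about merging representations and about the degenerate case $s(p_\epsilon)=0$ is welcome. The problem is the monotonicity half. You have correctly identified the crux --- the unique representation of a nonnegative $g\in\mathscr{L}_\epsilon(n)$ can have coefficients of both signs, so positivity of $\tau_\epsilon$ does not follow termwise --- but the analytic bridge you propose cannot be completed. Approximating $g$ from below by elements of $\mathscr{L}^q_\epsilon(n)$ via Proposition \ref{fact3} gains nothing, since the approximants are again mixed-sign combinations of exactly the same kind; and the ``coefficient-versus-sup-norm bound'' you invoke is only available on a \emph{fixed} finite-dimensional subspace, with a constant that degenerates as the approximating subspaces grow with $k$, so it cannot be made uniform and does not yield sup-norm continuity of $\tau_\epsilon$. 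For the record, the paper's own proof is no more successful at this point: it asserts monotonicity ``by definition and the monotonicity of $s$'', which only covers the case in which the two representations line up termwise.

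In fact no argument can close this gap, because axioms S1--S4 do not imply that $\tau_\epsilon$ is monotone (equivalently, positive). Take $n=1$, the $\epsilon$ with $G_\epsilon=(0,1]$ and $p_\epsilon=\neg\neg x$, and define $s$ on $\FP(1)$ by $s(\neg x)=0$, $s(\neg\neg x)=1$, $s(x)=9/10$, $s(x^k)=(1/10)\cdot 2^{-(k-2)}$ for $k\ge 2$, and $s(x^k\vee\neg x)=s(x^k)+s(\neg x)$. This satisfies all the conditions listed in Section \ref{sec:states} for states of $\FP(1)$ (the values along the chain $\{x^k\}$ are positive and decreasing, and every instance of S2, S3 and S4 checks out on the lattice of Figure \ref{FP1}), so it is a state. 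Now $g=(1-x)^2=1-2x+x^2$ lies in $\mathscr{L}_\epsilon(1)$, its unique representation uses the basis $\{1,x,x^2\}$ with coefficients $(1,-2,1)$, and $g\ge 0$ on $G_\epsilon$; yet
\[
\tau_\epsilon(g)=s(\neg\neg x)-2s(x)+s(x^2)=1-\tfrac{18}{10}+\tfrac{1}{10}=-\tfrac{7}{10}<0=\tau_\epsilon(0).
\]
(Equivalently: any representing measure would have to satisfy $s(x)^2\le s(x^2)$ by Cauchy--Schwarz, and $(1,9/10,1/10,\dots)$ is not a Hausdorff moment sequence.) So the obstruction you isolated is not merely technical: monotonicity of $\tau_\epsilon$ requires strictly more than S1--S4 --- something on the order of complete monotonicity of $k\mapsto s(f^k)$ --- and both your argument and the paper's fail at the same step.
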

\begin{proof}
Linearity follows by the very definition of $\tau_\epsilon$. 
As for the monotonicity of $\tau_{\epsilon}$ let $0<g\leq g'$ with $g=\sum_{i=1}^{k} \lambda_{i}\cdot f_{i,\epsilon}$ and $g'=\sum_{i=1}^{k'} \lambda'_{i}\cdot f'_{i,\epsilon}$ as given by Proposition \ref{prop:unique}. Then,  $\tau_{\epsilon}(g)>0$ implies $\tau_{\epsilon}(g')\geq \tau_{\epsilon}(g)$ by definition and the monotonicity of $s$. On the other hand, if $\tau_{\epsilon}(g)=0$, then $s(f_{i,\epsilon}\wedge p_\epsilon)=0$ for every $i$ whence, by (S4'), $s(p_{\epsilon})=0$. Therefore, $s(f'_{i,\epsilon}\wedge p_\epsilon)=0$ by monotonicity of $s$ thus, $\tau_{\epsilon}(g')=0$.
\end{proof}

Now, for every  $q\in (0,1]\cap\mathbb{Q}$, let $\tau_\epsilon^q: \mathscr{L}^q_\epsilon(n)\to \mathbb{R}$ be defined as follows: for every $g\in \mathscr{L}^q_\epsilon(n)$, 
$$
\tau_\epsilon^q(g)=\inf\{\tau_\epsilon(g')\mid g'\in \mathscr{L}_\epsilon(n),\; g'_{\rest{G_\epsilon^q}}=g\}.
$$

\begin{proposition}\label{lemma:tauQ}
$\tau_{\epsilon}^q$ is a linear and monotone functional over $\mathscr{L}^q_\epsilon(n)$.
Moreover, if $q_2 \leq q_1$, $g\in \mathscr{L}^{q_1}_\epsilon(n)$, and $g'\in \mathscr{L}^{q_2}_\epsilon(n)$ extends $g$, then  $\tau_{\epsilon}^{q_{1}} (g) \leq \tau_{\epsilon}^{q_{2}}(g')$.
\end{proposition}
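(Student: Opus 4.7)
The plan is to verify the three assertions in order of increasing subtlety, drawing on the linearity and monotonicity of $\tau_\epsilon$ from Proposition~\ref{prop:linear}. The easiest step is the final inequality: when $q_2 \leq q_1$, Remark~\ref{rem:nested} gives $G_\epsilon^{q_1} \subseteq G_\epsilon^{q_2}$, so every $h \in \mathscr{L}_\epsilon(n)$ that restricts to $g'$ on $G_\epsilon^{q_2}$ automatically restricts to $g$ on $G_\epsilon^{q_1}$. Thus the family of extensions of $g'$ is a subfamily of the family of extensions of $g$, and the infimum over the smaller family produces a larger number, yielding $\tau_\epsilon^{q_1}(g) \leq \tau_\epsilon^{q_2}(g')$.

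For linearity, the key observation is a bijection between extensions of $g_1 + g_2$ and pairs of extensions of $g_1$ and $g_2$: given any extension $h$ of $g_1 + g_2$ and any fixed extension $g_1'$ of $g_1$, the difference $h - g_1'$ lies in $\mathscr{L}_\epsilon(n)$ and extends $g_2$. Combined with the linearity of $\tau_\epsilon$, this decomposes the infimum defining $\tau_\epsilon^q(g_1+g_2)$ into an iterated infimum, yielding additivity. Positive homogeneity follows because $g' \mapsto \alpha g'$ bijects extensions for $\alpha > 0$; the case $\alpha < 0$ then follows formally from $0 = \tau_\epsilon^q(g + (-g)) = \tau_\epsilon^q(g) + \tau_\epsilon^q(-g)$, together with $\tau_\epsilon^q(0) = 0$. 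For monotonicity, given $g \leq g'$ in $\mathscr{L}^q_\epsilon(n)$ and an extension $h'$ of $g'$, I would construct an extension $h$ of $g$ with $h \leq h'$ pointwise on $G_\epsilon$, for instance via $h := h' + (g - g')^{\star}$ where $(g - g')^{\star} \in \mathscr{L}_\epsilon(n)$ is a non-positive extension of the non-positive function $g - g'$. Then $\tau_\epsilon(h) \leq \tau_\epsilon(h')$ by the monotonicity of $\tau_\epsilon$, and passing to the infimum yields $\tau_\epsilon^{q}(g) \leq \tau_\epsilon^{q}(g')$.

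The main obstacle I expect is the well-definedness of $\tau_\epsilon^q$ as a real-valued functional. Specifically, the negative-scalar case of linearity tacitly requires that $\tau_\epsilon(h) = 0$ for every $h \in \mathscr{L}_\epsilon(n)$ vanishing on $G_\epsilon^q$: otherwise, scaling $h$ by $\lambda \to \infty$ would drive the infimum defining $\tau_\epsilon^q(0)$ to $-\infty$, contradicting the sought-after additivity. Once this invariance of $\tau_\epsilon$ under the choice of extension is secured, the infimum in the definition of $\tau_\epsilon^q$ collapses to a single value, and both full linearity and monotonicity become direct consequences of the corresponding properties of $\tau_\epsilon$. Establishing this invariance is the technical heart of the argument and rests on the explicit formula \eqref{eq:tauEpsilon} together with axiom (S4'); a related subtlety is guaranteeing the existence of the non-positive extension $(g-g')^{\star}$ needed to close the monotonicity argument, which is where the piecewise-polynomial flexibility of $\mathscr{L}_\epsilon(n)$ must be exploited.
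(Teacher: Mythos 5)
Your overall strategy is the paper's: treat $\tau_\epsilon^q$ as an infimum over extensions, reduce everything to the linearity and monotonicity of $\tau_\epsilon$ from Proposition~\ref{prop:linear}, and obtain the final inequality from the containment of the two families of extensions --- which is exactly how the paper disposes of that part. Your additivity argument is in fact cleaner than the printed one: the paper decomposes an arbitrary extension $h$ of $g+g'$ as $z+z'$ with $z\leq h$ extending $g$ and then has to argue separately that the iterated infimum splits, whereas your translation $h\mapsto h-g_1'$ by a single fixed extension of $g_1$ gives a bijection between extensions of $g_1+g_2$ and extensions of $g_2$ in one step. For monotonicity the paper takes $k=h\wedge l$ with $l$ any extension of $g$, i.e.\ it uses a lattice meet inside $\mathscr{L}_\epsilon(n)$ where you use the additive correction $h'+(g-g')^{\star}$; the two devices are interchangeable, and each rests on a closure property of $\mathscr{L}_\epsilon(n)$ (closure under $\wedge$, respectively existence of a non-positive extension of a non-positive function) that neither you nor the paper actually verifies.

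The one place you go beyond the paper is the well-definedness issue. You are right that for $\tau_\epsilon^q$ to be real-valued, and for homogeneity with negative scalars, one needs $\tau_\epsilon(h)=0$ for every $h\in\mathscr{L}_\epsilon(n)$ vanishing on $G_\epsilon^q$, equivalently that $\tau_\epsilon$ is constant on the set of extensions of a given $g$, so that the infimum collapses. You flag this as the technical heart but do not prove it; be aware that the paper does not prove it either --- its proof tacitly assumes the infimum is finite and dismisses the scalar case with ``in a very similar way'', which hides exactly this point. So your proposal is not weaker than the published argument; it makes explicit a gap the paper leaves implicit. If you want to close it, the natural route would be to combine the uniqueness of representations from Proposition~\ref{prop:unique} with the observation that the pieces of product functions are cut out by global monomial comparisons, so that a linear combination vanishing on the full-dimensional compact $G_\epsilon^q$ should already vanish on all of $G_\epsilon$; that would make $\tau_\epsilon$ independent of the chosen extension and render the rest of your argument, and the paper's, fully rigorous.
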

\begin{proof}
See Appendix A.
\end{proof}
Now, we want to extend $\tau_\epsilon^q$ to a linear and monotone functional on the set $\mathscr{C}(G_\epsilon^q)$ of real-valued continuous functions over $G_\epsilon^q$. For every $c\in\mathscr{C}(G_\epsilon^q)$, let $Seq(c)$ be the set of countable increasing sequences $\overline{g}=\{g_i\}_{i\in \mathbb{N}}$ of elements in $\mathscr{L}^q_\epsilon(n)$ uniformly converging to $c$, in symbols, $\overline{g} \nnearrow\! c$.

 
 Thus, for every $c\in \mathscr{C}(G_\epsilon^q)$  and for every $\overline{g}\in Seq(c)$ we first define
 $$
 \sigma_{\overline{g}}(c)=\bigvee_{i\in\mathbb{N}}\tau_\epsilon^q(g_i), 
 $$
 and  finally we put
 \begin{equation}\label{eq:sigmaXQ}
 \sigma_\epsilon^q(c)=\bigvee_{\overline{g}\in Seq(c)}\sigma_{\overline{g}}(c).
 \end{equation}
 
 \begin{lemma}\label{lemmaSigmaEpsilon}
For every $\epsilon\in \Sigma$ and  every $q\in (0,1]\cap\mathbb{Q}$, $\sigma^q_{\epsilon}$ is a positive, monotone and linear functional. Moreover $\sigma^q_\epsilon$ extends $\tau^q_\epsilon$ on $\mathscr{L}^q_\epsilon(n)$. 
\end{lemma}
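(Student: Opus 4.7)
The plan is to establish the master identity
\begin{equation*}
\sigma^q_\epsilon(c) = \lim_{i\to\infty} \tau^q_\epsilon(g_i) \qquad \text{for every } \overline{g} = \{g_i\}_{i\in\mathbb{N}} \in Seq(c),
\end{equation*}
from which the four claimed properties of $\sigma^q_\epsilon$ (extension, linearity, positivity, monotonicity) follow by passing to the limit. The engine driving this identity is a uniform continuity estimate for $\tau^q_\epsilon$: since all constants $\delta\cdot 1$ lie in $\mathscr{L}^q_\epsilon(n)$ and the bound $-\|g-h\|_\infty\cdot 1 \leq g-h \leq \|g-h\|_\infty\cdot 1$ holds pointwise on $G_\epsilon^q$, the linearity and monotonicity of $\tau^q_\epsilon$ from Proposition \ref{lemma:tauQ} yield
\begin{equation*}
|\tau^q_\epsilon(g)-\tau^q_\epsilon(h)| \;\leq\; \|g-h\|_\infty \cdot \tau^q_\epsilon(1), \qquad g,h\in\mathscr{L}^q_\epsilon(n).
\end{equation*}
For each $\overline{g}\in Seq(c)$, monotonicity of $\tau^q_\epsilon$ makes $\{\tau^q_\epsilon(g_i)\}$ non-decreasing and bounded above by $\|c\|_\infty\tau^q_\epsilon(1)$, hence convergent; and for any two sequences $\overline{g},\overline{h}\in Seq(c)$ one has $\|g_i-h_i\|_\infty\to 0$, so the continuity bound forces both limits to coincide, giving the identity.

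Before invoking the identity we must confirm that $Seq(c)\neq\emptyset$. Proposition \ref{fact3} supplies $\{h_n\}\subset\mathscr{L}^q_\epsilon(n)$ with $h_n\leq c$ and $h_n\to c$ uniformly, but no monotonicity, and the standard running-max trick is unavailable because $\mathscr{L}^q_\epsilon(n)$ is merely the linear span of $\mathscr{P}_\epsilon(n)$ and need not be closed under $\vee$. The workaround is to thin to a subsequence with $\|h_n-c\|_\infty \leq 1/2^n$ and set $g_n := h_n - 3/2^n$. The pointwise triangle bound $|h_{n+1}-h_n|\leq 3/2^{n+1}$ gives
\begin{equation*}
g_{n+1}-g_n = (h_{n+1}-h_n) + 3/2^{n+1} \geq 0
\end{equation*}
pointwise, while $g_n\leq c$ and $\|g_n-c\|_\infty \leq 4/2^n$, so $\overline{g}\in Seq(c)$. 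This shift construction is the single genuine technical point of the proof; it will be the main obstacle, since everything else is a routine limit argument once the master identity is available.

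With the identity in hand, the four properties are immediate. \emph{Extension}: for $g\in\mathscr{L}^q_\epsilon(n)$, the constant sequence $g_i=g$ lies in $Seq(g)$, hence $\sigma^q_\epsilon(g)=\tau^q_\epsilon(g)$. \emph{Additivity}: if $\overline{g}^{(1)}\in Seq(c_1)$ and $\overline{g}^{(2)}\in Seq(c_2)$, then $\{g^{(1)}_i + g^{(2)}_i\}\in Seq(c_1+c_2)$, and linearity of $\tau^q_\epsilon$ transfers through the limit. Homogeneity for $\lambda\geq 0$ uses $\lambda\overline{g}\in Seq(\lambda c)$, and $\sigma^q_\epsilon(-c)=-\sigma^q_\epsilon(c)$ then follows from additivity together with $\sigma^q_\epsilon(0)=0$, completing linearity. \emph{Positivity}: if $c\geq 0$ and $\overline{g}\in Seq(c)$, then for each $\eta>0$ the inequality $g_i + \eta\cdot 1\geq 0$ holds pointwise once $i$ is large, so monotonicity gives $\tau^q_\epsilon(g_i)\geq -\eta\,\tau^q_\epsilon(1)$; letting $i\to\infty$ and then $\eta\to 0$ yields $\sigma^q_\epsilon(c)\geq 0$. \emph{Monotonicity} is then the composition of additivity and positivity applied to $c_2-c_1\geq 0$.
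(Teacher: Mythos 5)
Your proof is correct, but it runs on a different engine than the paper's. The paper keeps the outer supremum in the definition of $\sigma^q_\epsilon$ alive and handles it by direct manipulation of sequences: monotonicity via a domination fact (for each $\overline{g}\nnearrow c$ build $\overline{g'}\nnearrow c'$ with $g'_i=g_i\vee g_i^*$ eventually above $g_i$) and linearity via a decomposition fact (every $\overline{t}\nnearrow c+c'$ splits as $\overline{a}+\overline{a'}$ with $\overline{a}\nnearrow c$, $\overline{a'}\nnearrow c'$). You instead collapse the definition outright: the sup-norm Lipschitz bound $|\tau^q_\epsilon(g)-\tau^q_\epsilon(h)|\leq \|g-h\|_\infty\,\tau^q_\epsilon(1)$, available because constants lie in $\mathscr{L}^q_\epsilon(n)$ and Proposition \ref{lemma:tauQ} gives linearity and monotonicity of $\tau^q_\epsilon$, forces every sequence in $Seq(c)$ to produce the same limit, so $\sigma^q_\epsilon(c)$ is that common value and each property is read off from one convenient sequence. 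This buys you two things the paper's route does not. First, you avoid the question of whether $\mathscr{L}^q_\epsilon(n)$ is closed under $\vee$ (needed for the paper's $g_i\vee g_i^*$) and whether the difference of two increasing sequences is again increasing (needed for the paper's splitting of $\overline{t}$ into $\overline{a}$ and $\overline{t}-\overline{a}$); neither is justified in the paper, and your caution about the first is well placed since the linear span of a sublattice need not be a sublattice. Second, you explicitly verify $Seq(c)\neq\emptyset$, which the paper leaves implicit: Proposition \ref{fact3} only yields a uniformly convergent minorizing sequence, and your thin-and-shift construction $g_n=h_n-3/2^n$ with $\|h_n-c\|_\infty\leq 1/2^n$ correctly upgrades it to an increasing one. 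The cost is negligible; if anything your argument is the more robust of the two, being the standard density-extension pattern for positive linear functionals.
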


\begin{proof}
See Appendix A.
\end{proof}

The previous Lemma \ref{lemmaSigmaEpsilon} has the following immediate consequence.
\begin{theorem}\label{cor:int}
For every $\epsilon \in \Sigma$ and every rational $q$, there is a unique regular Borel measure $\mu_\epsilon^{q}$ such that, for any $c \in  \mathscr{C}(G^q_\epsilon) $, 
$$\sigma_\epsilon^q(c)  = \int_{G^q_\epsilon} c \:{\rm d}\mu_{\epsilon}^q.$$
In particular, for all $g\in \mathscr{L}^q_\epsilon(n)$,
$$
\tau^q_\epsilon(g)=\int_{G^q_\epsilon} g \:{\rm d}\mu_{\epsilon}^q.
$$
\end{theorem}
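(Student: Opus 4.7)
The plan is to apply the Riesz representation theorem directly, since the essential analytic groundwork has already been accomplished by Lemma \ref{lemmaSigmaEpsilon}. First, I would verify that $G^q_\epsilon$ satisfies the topological hypotheses required. Recalling that $G^q_\epsilon = \prod_{i=1}^n B_i^q$, where each $B_i^q$ is either the closed interval $[q,1]$ or the singleton $\{0\}$, one sees that $G^q_\epsilon$ is a finite product of compact Hausdorff spaces, hence itself compact (and therefore locally compact) and Hausdorff in the subspace topology inherited from $[0,1]^n$.

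Next, I would note that since $G^q_\epsilon$ is compact, every continuous real-valued function on it automatically has compact support, so that $\mathscr{C}_c(G^q_\epsilon) = \mathscr{C}(G^q_\epsilon)$. By Lemma \ref{lemmaSigmaEpsilon}, $\sigma^q_\epsilon$ is a positive, monotone and linear functional defined on the whole of $\mathscr{C}(G^q_\epsilon)$. The version of the Riesz representation theorem recalled in the footnote of this section then yields a unique regular Borel measure $\mu_\epsilon^q$ on $G^q_\epsilon$ such that
$$\sigma_\epsilon^q(c) = \int_{G^q_\epsilon} c \; {\rm d}\mu_\epsilon^q$$
for every $c \in \mathscr{C}(G^q_\epsilon)$, which is exactly the first assertion.

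For the second, ``in particular'' assertion, I would invoke the remaining half of Lemma \ref{lemmaSigmaEpsilon}, namely that $\sigma^q_\epsilon$ agrees with $\tau^q_\epsilon$ on $\mathscr{L}^q_\epsilon(n)$. Since elements of $\mathscr{L}^q_\epsilon(n)$ are continuous on the compact set $G^q_\epsilon$ (as already used in the proof of Proposition \ref{fact3}, where $\mathscr{L}^q_\epsilon(n)$ is treated as a subalgebra of $\mathscr{C}(G^q_\epsilon)$), the integral representation just obtained specializes to
$$\tau_\epsilon^q(g) = \sigma_\epsilon^q(g) = \int_{G^q_\epsilon} g \; {\rm d}\mu_\epsilon^q$$
for every $g \in \mathscr{L}^q_\epsilon(n)$, which concludes the argument.

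Because the analytic effort has been absorbed into Lemma \ref{lemmaSigmaEpsilon}, no serious obstacle remains: the only residual task is a matching of domains, checking that $\sigma^q_\epsilon$ is defined on the space of all continuous functions over a compact Hausdorff set and that $\mathscr{L}^q_\epsilon(n)$ lies inside that space, so that Riesz applies with uniqueness. The proof should therefore take only a few lines.
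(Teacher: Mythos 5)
Your proposal is correct and follows exactly the paper's own argument: apply the Riesz representation theorem to the positive linear functional $\sigma^q_\epsilon$ on $\mathscr{C}(G^q_\epsilon)$, using the compactness of $G^q_\epsilon$, and then derive the second assertion from the fact that $\sigma^q_\epsilon$ extends $\tau^q_\epsilon$ on $\mathscr{L}^q_\epsilon(n)$ (the last part of Lemma \ref{lemmaSigmaEpsilon}). The only difference is that you spell out the topological verifications, which the paper leaves implicit.
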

\begin{proof}
From Lemma \ref{lemmaSigmaEpsilon}, $\sigma_\epsilon^q$ is a (positive) linear functional over $\mathscr{C}(G^q_\epsilon)$, with $G^q_\epsilon$ being compact, thus the the first part of the claim  follows from Riesz representation theorem \cite[Theorem 2.14]{Ru}. The last part of the claim, finally follows from the last part of Lemma \ref{lemmaSigmaEpsilon}.
\end{proof}
With respect to the notation used in the previous Theorem \ref{cor:int},  the following  lemma holds.
\begin{lemma}\label{lemma:intTau}
For every Borel subset $B$ of $G_\epsilon$ and for every $q\in (0,1]\cap\mathbb{Q}$, if $q'\leq q$, $\mu_\epsilon^q(B\cap G_\epsilon^q)=\mu_\epsilon^{q'}(B\cap G_\epsilon^{q})$.
\end{lemma}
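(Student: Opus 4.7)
My plan is to show that $\mu_\epsilon^q$ coincides with the restriction of $\mu_\epsilon^{q'}$ to Borel subsets of $G_\epsilon^q$; the claim is then immediate. Define $\nu(B) := \mu_\epsilon^{q'}(B)$ for each Borel $B \subseteq G_\epsilon^q$. Since $q' \leq q$ gives $G_\epsilon^q \subseteq G_\epsilon^{q'}$ by Remark \ref{rem:nested}, and $G_\epsilon^q$ is compact and hence closed in $G_\epsilon^{q'}$, $\nu$ is a finite regular Borel measure on $G_\epsilon^q$. By the uniqueness part of Riesz's representation theorem (invoked to produce $\mu_\epsilon^q$ in Theorem \ref{cor:int}), $\nu = \mu_\epsilon^q$ if and only if the two measures induce the same positive linear functional on $\mathscr{C}(G_\epsilon^q)$.

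So I must verify $\int_{G_\epsilon^q} c \, {\rm d}\nu = \sigma_\epsilon^q(c)$ for every $c \in \mathscr{C}(G_\epsilon^q)$. Since $\mathscr{L}_\epsilon^q(n)$ is uniformly dense in $\mathscr{C}(G_\epsilon^q)$ by Proposition \ref{fact3} and both functionals are bounded linear in the sup norm, this reduces to proving
$$\int_{G_\epsilon^q} g \, {\rm d}\mu_\epsilon^{q'} \;=\; \tau_\epsilon^q(g) \qquad \text{for every } g \in \mathscr{L}_\epsilon^q(n),$$
recalling that $\sigma_\epsilon^q$ extends $\tau_\epsilon^q$ on $\mathscr{L}_\epsilon^q(n)$ by Lemma \ref{lemmaSigmaEpsilon}.

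To prove this equality I will exploit the common origin of both sides in the global functional $\tau_\epsilon$ on $\mathscr{L}_\epsilon(n)$. For any extension $h \in \mathscr{L}_\epsilon(n)$ with $h\rest{G_\epsilon^q} = g$, the restriction $h\rest{G_\epsilon^{q'}}$ lies in $\mathscr{L}_\epsilon^{q'}(n)$ and Theorem \ref{cor:int} combined with additivity of the Lebesgue integral gives
$$\tau_\epsilon^{q'}(h\rest{G_\epsilon^{q'}}) \;=\; \int_{G_\epsilon^q} g \, {\rm d}\mu_\epsilon^{q'} \;+\; \int_{G_\epsilon^{q'} \setminus G_\epsilon^q} h \, {\rm d}\mu_\epsilon^{q'}.$$
Comparing this identity with the infimum characterisations $\tau_\epsilon^q(g) = \inf\{\tau_\epsilon(h') : h'\rest{G_\epsilon^q}=g\}$ and $\tau_\epsilon^{q'}(h\rest{G_\epsilon^{q'}}) = \inf\{\tau_\epsilon(h') : h'\rest{G_\epsilon^{q'}}=h\rest{G_\epsilon^{q'}}\}$ (noting that the latter infimum is taken over a subset of the former), the goal is to produce a minimising sequence of extensions $\{h_k\} \subseteq \mathscr{L}_\epsilon(n)$ for which simultaneously $\tau_\epsilon(h_k) \to \tau_\epsilon^q(g)$ and $\int_{G_\epsilon^{q'} \setminus G_\epsilon^q} h_k \, {\rm d}\mu_\epsilon^{q'} \to 0$.

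The hard part will be constructing such a sequence. Since $\mathscr{L}_\epsilon(n)$ is not closed under multiplication by arbitrary continuous cutoffs, one cannot simply multiply a chosen extension by a bump concentrated on $G_\epsilon^q$. Instead, the construction must leverage the piecewise monomial structure from Proposition \ref{prop:unique} together with the nested family $\{G_\epsilon^{q_k}\}$ (with $q_k \in (q',q)$ tending to $q$) to produce extensions whose mass outside $G_\epsilon^q$ against $\mu_\epsilon^{q'}$ becomes negligible in the limit, while realising the infimum defining $\tau_\epsilon^q(g)$. Reconciling this algebraic constraint on the $h_k$ with quantitative measure-theoretic control over $\mu_\epsilon^{q'}$ near the boundary of $G_\epsilon^q$ inside $G_\epsilon^{q'}$ is the principal obstacle, and I would expect the paper to develop it in the appendix.
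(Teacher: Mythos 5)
Your reduction is sound as far as it goes: restricting $\mu_\epsilon^{q'}$ to $G_\epsilon^q$ gives a regular Borel measure on a compact metric space, Riesz uniqueness reduces the lemma to the equality of the two induced functionals on $\mathscr{C}(G_\epsilon^q)$, and density of $\mathscr{L}_\epsilon^q(n)$ (Proposition \ref{fact3}) plus Lemma \ref{lemmaSigmaEpsilon} reduces that to $\int_{G_\epsilon^q} g\, {\rm d}\mu_\epsilon^{q'} = \tau_\epsilon^q(g)$ for $g\in\mathscr{L}_\epsilon^q(n)$. But that last identity is exactly where the content of the lemma lives, and you do not prove it: you only record the splitting $\tau_\epsilon^{q'}(h\rest{G_\epsilon^{q'}}) = \int_{G_\epsilon^q} g\,{\rm d}\mu_\epsilon^{q'} + \int_{G_\epsilon^{q'}\setminus G_\epsilon^q} h\,{\rm d}\mu_\epsilon^{q'}$ and then defer the construction of a minimising sequence $\{h_k\}$ with vanishing boundary term. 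That construction is genuinely problematic: elements of $\mathscr{L}_\epsilon(n)$ are linear combinations of functions that are either identically $0$ or strictly positive on all of $G_\epsilon$, so they cannot be truncated near $\partial G_\epsilon^q$, and bounding $\int_{G_\epsilon^{q'}\setminus G_\epsilon^q} h_k\,{\rm d}\mu_\epsilon^{q'}$ (which can even be negative, since the $\lambda_i$ may have either sign) requires quantitative information about $\mu_\epsilon^{q'}$ near $G_\epsilon^q$ --- essentially the statement being proved. So the proposal has a real gap, not just an omitted routine verification.

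The paper avoids this obstacle entirely by never comparing integrals of functions over the two domains. Writing $Z=B\cap G_\epsilon^q$, it uses outer regularity of the Riesz measures to express $\mu_\epsilon^{q^*}(Z)=\inf\{\tau_\epsilon^{q^*}(g)\mid g\in\mathscr{L}_\epsilon^{q^*}(n),\ g\geq\chi_Z\}$ for $q^*\in\{q,q'\}$, and then unfolds the definition $\tau_\epsilon^{q^*}(g)=\inf\{\tau_\epsilon(f)\mid f\rest{G_\epsilon^{q^*}}=g\}$: both double infima collapse to the single infimum of $\tau_\epsilon(f)$ over the \emph{same} set of global functions $f\in\mathscr{L}_\epsilon(n)$ dominating $\chi_Z$ on the relevant domain, so the two quantities are literally equal and no control of mass on $G_\epsilon^{q'}\setminus G_\epsilon^q$ is ever needed. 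If you want to salvage your framing, the fix is to import that collapse: prove the set-level identity $\mu_\epsilon^q(Z)=\mu_\epsilon^{q'}(Z)$ first (as the paper does) rather than the function-level identity, since $g\cdot\chi_{G_\epsilon^q}$ is not continuous on $G_\epsilon^{q'}$ and cannot be reached by $\tau_\epsilon^{q'}$ directly.
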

\begin{proof}
Let us write $Z=B\cap G_\epsilon^q$. First of all, notice that $\mu_\epsilon^q(Z)=\inf\{\sigma_{\epsilon}^q(c)\mid c\in \mathscr{C}(G_\epsilon^q), \; c\geq \chi_Z\}$ and $\mu_\epsilon^{q'}(Z)=\inf\{\sigma_{\epsilon}^{q'}(c')\mid c'\in \mathscr{C}(G_\epsilon^{q'}), \; c'\geq \chi_Z\}$ (where $\chi_Z$ denotes the characteristic function of $Z$). 

Since for each $r\in (0,1]\cap\mathbb{Q}$, $\mathscr{L}_\epsilon^r(n)$ is dense in $\mathscr{C}(G_\epsilon^r)$, we can safely write, for $q^*$ either being $q$ or $q'$, 
$$
\mu_\epsilon^{q^*}(Z)=\inf\{\sigma_{\epsilon}^{q^*}(g)\mid g\in \mathscr{L}_\epsilon^{q^*}(n), \; g\geq \chi_Z\},
$$
whence, from Lemma \ref{lemmaSigmaEpsilon}, 
$$
\mu_\epsilon^{q^*}(Z)=\inf\{\tau_{\epsilon}^{q^*}(g)\mid g\in \mathscr{L}_\epsilon^{q^*}(n), \; g\geq \chi_Z\}.
$$
Let us now define
$$
\Delta=\{g\in \mathscr{L}_\epsilon^{q}(n), \; g\geq \chi_Z\}\mbox{ and } \Delta'=\{g'\in \mathscr{L}_\epsilon^{q'}(n), \; g'\geq \chi_Z\}
$$
and 
$$
I=\{f\in \mathscr{L}_\epsilon(n)\mid f_{\rest G_\epsilon^q}\in \Delta\}\mbox{ and }I'=\{h\in \mathscr{L}_\epsilon(n)\mid h_{\rest G_\epsilon^{q'}}\in \Delta'\}.
$$
Clearly $I=I'$. Indeed, if $f\in I$, then $f\in \Delta(\epsilon)$, whence $f_{\rest G_\epsilon^{q'}}\in \mathscr{L}_\epsilon^{q'}(n)$ and $f_{\rest G_\epsilon^{q'}}\geq \chi_Z$ because $q'\leq q$, whence $Z=B\cap G_\epsilon^q\subseteq B\cap G_\epsilon^{q'}$. Conversely, if $h\in I'$, $h_{\rest G_\epsilon^q}\geq \chi_Z$ again because $q'\leq q$. Then, for every $g\in \Delta$, there is a $g'\in \Delta'$ such that $\tau_\epsilon^q(g)=\tau_{\epsilon}^{q'}(g')$ and vice versa. Thus, by the very definition of $\tau_\epsilon^q$, the claim is settled.
\end{proof}
Now, recalling  Remark \ref{rem:nested},  for every $\epsilon$ and for $q_1\geq q_2$, we have $G_\epsilon^{q_1}\subseteq G_\epsilon^{q_2}$. Thus, the following is an immediate consequence of the above result.
\begin{corollary}\label{cor:BorelMeasure}
If $B$ is a Borel subset of $G_\epsilon^q$ for some $q$, then for all $q'\leq q$, $\mu_\epsilon^q(B)=\mu_\epsilon^{q'}(B)$. 
\end{corollary}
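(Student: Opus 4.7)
The plan is to derive this corollary as a direct consequence of Lemma \ref{lemma:intTau}, whose statement has been carefully designed to make this immediate. The only ingredients needed beyond Lemma \ref{lemma:intTau} are the nesting of the compact sets $G_\epsilon^q$ from Remark \ref{rem:nested} and a brief verification that $B$ qualifies as an input to the lemma.

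First, I would fix $\epsilon \in \Sigma$, $q \in (0,1]\cap\mathbb{Q}$, a Borel subset $B \subseteq G_\epsilon^q$, and some $q' \leq q$. By Remark \ref{rem:nested}, $q' \leq q$ yields the inclusion $G_\epsilon^q \subseteq G_\epsilon^{q'}$, so in particular $B \subseteq G_\epsilon^{q'}$ as well, and $B$ is thus a Borel set in each of $G_\epsilon^q$, $G_\epsilon^{q'}$, and the ambient $G_\epsilon$ (Borel measurability is preserved by taking subspace topology inside a Borel subspace).

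Next, I would invoke Lemma \ref{lemma:intTau} with this $B$, viewed as a Borel subset of $G_\epsilon$, and with the rationals $q'\leq q$ as given. The lemma delivers
\[
\mu_\epsilon^q(B \cap G_\epsilon^q) \;=\; \mu_\epsilon^{q'}(B \cap G_\epsilon^q).
\]
Finally, since $B \subseteq G_\epsilon^q$ by hypothesis, we have $B \cap G_\epsilon^q = B$, and substituting on both sides yields $\mu_\epsilon^q(B) = \mu_\epsilon^{q'}(B)$, as required.

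There is essentially no obstacle here; the work has all been done upstream in Lemma \ref{lemma:intTau}. The only point to be mildly careful about is the direction of the nesting in Remark \ref{rem:nested} (smaller $q$ gives a larger $G_\epsilon^q$), which is what guarantees that $B$ still sits inside $G_\epsilon^{q'}$ so that $\mu_\epsilon^{q'}(B)$ is even well-defined. Once that is checked, the corollary is a one-line consequence of the lemma together with the set-theoretic identity $B \cap G_\epsilon^q = B$.
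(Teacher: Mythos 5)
Your proof is correct and follows exactly the route the paper intends: the paper itself presents this corollary as an immediate consequence of Lemma \ref{lemma:intTau} together with the nesting $G_\epsilon^{q}\subseteq G_\epsilon^{q'}$ for $q'\leq q$ from Remark \ref{rem:nested}, which is precisely your argument via the identity $B\cap G_\epsilon^q=B$. Your extra remark that $B\subseteq G_\epsilon^{q'}$ is needed for $\mu_\epsilon^{q'}(B)$ to be defined is a sensible, if minor, addition.
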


We can now establish an integral representation for the linear and monotone functionals $\tau_\epsilon$ on $\mathscr{L}_\epsilon(n)$. 

\begin{lemma}\label{lemma:riesz}
For every $\epsilon \in \Sigma$, there is a 
Borel probability measure $\mu_\epsilon$ on the Borel subsets of $G_\epsilon$ such that, for every $g\in \mathscr{L}_\epsilon(n)$,
\begin{equation}\label{eqMuEpsilon}
\tau_\epsilon(g)=\int_{G_\epsilon}g\;{\rm d}\mu_\epsilon.
\end{equation}
\end{lemma}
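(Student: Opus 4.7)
The plan is to glue the regular Borel measures $\mu_\epsilon^q$ from Theorem \ref{cor:int} along the nested compact exhaustion $\{G_\epsilon^q\}_{q\in(0,1]\cap\mathbb{Q}}$ (Remark \ref{rem:nested}) into a single measure $\mu_\epsilon$ on $G_\epsilon$, and then to derive the integral representation \eqref{eqMuEpsilon} via dominated convergence together with a $\pm g$ symmetry trick that avoids controlling directly how the infima defining $\tau_\epsilon^q$ approach $\tau_\epsilon$. Specifically, for every Borel $B\subseteq G_\epsilon$ I would set
$$\mu_\epsilon(B)\;:=\;\lim_{q\to 0^+}\mu_\epsilon^q(B\cap G_\epsilon^q),$$
the limit existing as a monotone limit of non-negative reals: Corollary \ref{cor:BorelMeasure} gives the consistency $\mu_\epsilon^{q'}(B\cap G_\epsilon^q)=\mu_\epsilon^q(B\cap G_\epsilon^q)$ whenever $q'\leq q$, and the sets $G_\epsilon^q$ grow as $q\to 0^+$. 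Countable additivity of $\mu_\epsilon$ then follows from countable additivity of each $\mu_\epsilon^q$ by swapping $\lim_q$ with $\sum_n$ (legitimate by monotone convergence for non-negative, non-decreasing sequences), and by construction $\mu_\epsilon$ restricted to Borel subsets of $G_\epsilon^q$ coincides with $\mu_\epsilon^q$.

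Before invoking dominated convergence I would verify that $\mu_\epsilon$ is finite. By Theorem \ref{cor:int}, $\mu_\epsilon^q(G_\epsilon^q)=\tau_\epsilon^q(1)$; and since the constant function $1=(p_\epsilon)_\epsilon\in\mathscr{L}_\epsilon(n)$ is an admissible extension of itself in the infimum defining $\tau_\epsilon^q$, one has $\tau_\epsilon^q(1)\leq\tau_\epsilon(1)=s(p_\epsilon)/s(p_\epsilon)=1$ whenever $s(p_\epsilon)>0$. (The degenerate case $s(p_\epsilon)=0$ forces $\tau_\epsilon\equiv 0$ by monotonicity of $s$, and the lemma then holds trivially with $\mu_\epsilon=0$.) Thus $\mu_\epsilon(G_\epsilon)\leq 1$, and $\mu_\epsilon$ is a finite Borel measure.

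For the integral representation, fix $g\in\mathscr{L}_\epsilon(n)$, necessarily bounded. For each $q$,
$$\int_{G_\epsilon^q} g\,d\mu_\epsilon\;=\;\int_{G_\epsilon^q} g\,d\mu_\epsilon^q\;=\;\tau_\epsilon^q(g_{\rest{G_\epsilon^q}})\;\leq\;\tau_\epsilon(g),$$
the middle equality being Theorem \ref{cor:int} and the inequality coming from taking $g$ itself in the infimum defining $\tau_\epsilon^q$. Dominated convergence (applicable since $g$ is bounded and $\mu_\epsilon(G_\epsilon)\leq 1$) yields $\int_{G_\epsilon} g\,d\mu_\epsilon\leq\tau_\epsilon(g)$ on letting $q\to 0^+$. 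Applying the same inequality to $-g\in\mathscr{L}_\epsilon(n)$ and using linearity of the integral and of $\tau_\epsilon$ delivers the reverse inequality, hence \eqref{eqMuEpsilon}. Finally, specializing to $g=1$ yields $\mu_\epsilon(G_\epsilon)=\tau_\epsilon(1)=1$, so $\mu_\epsilon$ is a probability measure in the non-degenerate case. The main obstacle one might anticipate---showing $\tau_\epsilon^q(g_{\rest{G_\epsilon^q}})\to\tau_\epsilon(g)$ directly, which would require understanding how the infima over extensions stabilize---is sidestepped by the $\pm g$ symmetry together with the one-sided inequality above.
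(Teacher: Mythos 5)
Your proposal is correct and follows the same overall architecture as the paper's proof: take the measures $\mu_\epsilon^q$ supplied by Theorem \ref{cor:int}, glue them along the exhaustion $G_\epsilon=\bigcup_q G_\epsilon^q$ using the consistency property of Corollary \ref{cor:BorelMeasure}, and pass to the limit in $q$. The differences are in the execution, and they mostly work in your favour. First, you build $\mu_\epsilon$ as a monotone setwise limit and check $\sigma$-additivity by interchanging $\lim_q$ with $\sum_n$, whereas the paper extends each $\mu_\epsilon^q$ by zero and invokes the Vitali--Hahn--Saks theorem; your version is more elementary and equally valid. Second, and more substantively, the paper closes the argument by asserting $\lim_q \tau_\epsilon^q(g_{\rest G_\epsilon^q})=\tau_\epsilon(g)$ ``by the definition of $\tau_\epsilon^q$ and Proposition \ref{lemma:tauQ}''; only the inequality $\leq$ is immediate from the infimum over extensions, and the reverse inequality is left unargued. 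Your $\pm g$ symmetry (legitimate, since $\mathscr{L}_\epsilon(n)$ is closed under negation and $\tau_\epsilon$ is linear by Proposition \ref{prop:linear}) upgrades the one-sided estimate $\int_{G_\epsilon} g\,{\rm d}\mu_\epsilon\leq\tau_\epsilon(g)$ to an equality and thereby supplies exactly the missing half. Relatedly, dominated convergence is the right tool for the limit of integrals: the paper appeals to Levi's monotone convergence theorem for the truncations $g_q$, but these need not be monotone in $q$ when $g$ takes negative values, as elements of $\mathscr{L}_\epsilon(n)$ may. One small caveat: in the degenerate case $s(p_\epsilon)=0$ your $\mu_\epsilon=0$ satisfies the integral identity but is not a probability measure; this imprecision is already present in the lemma as stated (and in the paper's own construction), and it is harmless downstream since Theorem \ref{int-repr} weights $\mu_\epsilon$ by $s(p_\epsilon)$.
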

\begin{proof}
See Appendix A.
\end{proof}

Finally, based on the previous results, next theorem provides an integral representation for states of product logic functions.

\begin{theorem}[Integral representation]\label{int-repr}
For every state $s$ of $\FP(n)$ there is a unique regular Borel probability measure $\mu$ such that 
$$
s(f)=\int_{[0,1]^n} f\; {\rm d}\mu.
$$
\end{theorem}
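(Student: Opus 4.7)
The plan is to assemble the local data supplied by Lemma \ref{lemma:riesz} into a single regular Borel probability measure on $[0,1]^n$ weighted by the Boolean-skeleton masses $s(p_\epsilon)$, and then verify both the integral identity and uniqueness. First I would unpack the identification $p_\epsilon = \chi_{G_\epsilon}$ (immediate from the pointwise definitions), so that $f \wedge p_\epsilon = f \cdot \chi_{G_\epsilon}$ for each $f \in \FP(n)$; since $\{p_\epsilon\}_{\epsilon \in \Sigma}$ is a partition of unity in $\mathscr{B}(\FP(n))$ and $s$ is finitely additive on the skeleton, $\sum_\epsilon s(p_\epsilon) = s(1) = 1$. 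I then define
\[
\mu(B) \;=\; \sum_{\epsilon \in \Sigma} s(p_\epsilon)\,\mu_\epsilon(B \cap G_\epsilon) \qquad (B \in \mathcal{B}([0,1]^n)),
\]
agreeing that any summand with $s(p_\epsilon) = 0$ is $0$. Since $\Sigma$ is finite and each $\mu_\epsilon$ is a Borel probability measure on the Borel set $G_\epsilon$, $\mu$ is a Borel probability measure on $[0,1]^n$ with total mass $1$; regularity is automatic, as $[0,1]^n$ is a compact metric space.

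For the representation identity, I decompose $f = \bigvee_\epsilon (f \wedge p_\epsilon)$ into pairwise disjoint pieces and compute
\[
\int_{[0,1]^n} f \,\mathrm{d}\mu \;=\; \sum_{\epsilon} s(p_\epsilon) \int_{G_\epsilon} f_\epsilon \,\mathrm{d}\mu_\epsilon \;=\; \sum_{\epsilon} s(p_\epsilon)\,\tau_\epsilon(f_\epsilon) \;=\; \sum_{\epsilon} s(f \wedge p_\epsilon) \;=\; s(f),
\]
invoking in turn the definition of $\mu$, Lemma \ref{lemma:riesz}, the definition \eqref{eq:tauEpsilon} of $\tau_\epsilon$ applied to the single-term combination $f_\epsilon = 1\cdot f_\epsilon$ (the degenerate case $s(p_\epsilon)=0$ is absorbed via the convention $s(p_\epsilon)\tau_\epsilon(f_\epsilon) = s(f \wedge p_\epsilon)$, the equality holding because monotonicity forces $s(f \wedge p_\epsilon) = 0$ there), and finally finite additivity of $s$ on the pairwise-disjoint join.

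For uniqueness, suppose $\mu'$ also represents $s$. Plugging $f = p_\epsilon$ gives $\mu'(G_\epsilon) = s(p_\epsilon)$, fixing the mass of each cell. For every $\epsilon$ with $s(p_\epsilon) > 0$, the normalized restriction $\nu_\epsilon := \mu'|_{G_\epsilon}/s(p_\epsilon)$ is a Borel probability measure on $G_\epsilon$ satisfying $\int_{G_\epsilon} f_\epsilon \, \mathrm{d}\nu_\epsilon = s(f \wedge p_\epsilon)/s(p_\epsilon) = \tau_\epsilon(f_\epsilon)$ for every $f \in \FP(n)$, and by linearity $\int_{G_\epsilon} g \, \mathrm{d}\nu_\epsilon = \tau_\epsilon(g) = \int_{G_\epsilon} g \,\mathrm{d}\mu_\epsilon$ on all of $\mathscr{L}_\epsilon(n)$. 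Restricting to each compact $G_\epsilon^q$, Proposition \ref{fact3} together with Riesz's uniqueness clause \cite[Theorem 2.14]{Ru} forces $\nu_\epsilon|_{G_\epsilon^q} = \mu_\epsilon|_{G_\epsilon^q}$; passing to the monotone limit along the nested exhaustion $G_\epsilon = \bigcup_{q \in (0,1] \cap \mathbb{Q}} G_\epsilon^q$ (Remark \ref{rem:nested}) yields $\nu_\epsilon = \mu_\epsilon$ on all Borel subsets of $G_\epsilon$, whence $\mu' = \mu$.

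The genuine hurdle is this last step, since a priori one only knows $\nu_\epsilon$ and $\mu_\epsilon$ agree on \emph{global} integrals of $\mathscr{L}_\epsilon(n)$-functions, while pinning them down as measures requires \emph{local} agreement on each compact $G_\epsilon^q$, and cutting off a $g \in \mathscr{L}_\epsilon(n)$ to $G_\epsilon^q$ need not land back in $\mathscr{L}_\epsilon(n)$. I would route around this via the auxiliary functionals $\tau_\epsilon^q$ and $\sigma_\epsilon^q$ of Section \ref{sec:integral}: both $\nu_\epsilon|_{G_\epsilon^q}$ and $\mu_\epsilon|_{G_\epsilon^q}$ induce the same positive linear functional on $\mathscr{C}(G_\epsilon^q)$ by uniform density, and the compatibility guaranteed by Lemma \ref{lemma:intTau} ensures the Riesz measures on the nested compacta glue consistently into a single measure on $G_\epsilon$, completing uniqueness. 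Existence, by contrast, is essentially a packaging exercise.
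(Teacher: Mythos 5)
Your existence argument coincides with the paper's: the same measure $\mu(B)=\sum_{\epsilon} s(p_\epsilon)\,\mu_\epsilon(B\cap G_\epsilon)$, the same decomposition $f=\bigvee_{\epsilon}(f\wedge p_\epsilon)$ into pairwise disjoint pieces, and the same chain through Lemma \ref{lemma:riesz} and the definition \eqref{eq:tauEpsilon} of $\tau_\epsilon$; that half is correct. For uniqueness you reorganize the argument: you normalize the competitor $\mu'$ on each cell and try to identify $\nu_\epsilon:=\mu'|_{G_\epsilon}/s(p_\epsilon)$ with $\mu_\epsilon$ directly, whereas the paper argues by contradiction, choosing a continuous $c$ with $\int c\,\mathrm{d}\mu_1\neq\int c\,\mathrm{d}\mu_2$ and descending through some $G_\epsilon$, some $G_\epsilon^q$, and an approximating sequence in $\mathscr{L}_\epsilon^q(n)$ until it collides with the fact that $\tau_\epsilon^q$ is determined by $s$. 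Up to the point where you reduce to ``two Borel probability measures on $G_\epsilon$ agreeing on $\int g$ for all $g\in\mathscr{L}_\epsilon(n)$,'' your reduction is sound.

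The gap is exactly the hurdle you name, and the detour you propose does not close it. Agreement of $\nu_\epsilon$ and $\mu_\epsilon$ on integrals over the whole (non-compact) $G_\epsilon$ of functions in $\mathscr{L}_\epsilon(n)$ does not by itself yield that their restrictions to a compact slice $G_\epsilon^q$ induce the same functional on $\mathscr{C}(G_\epsilon^q)$: a priori the signed measure $\nu_\epsilon-\mu_\epsilon$ could distribute mass so that the contributions from $G_\epsilon^q$ and from $G_\epsilon\setminus G_\epsilon^q$ cancel for every such $g$. Proposition \ref{fact3} (uniform density) only helps once local agreement on $\mathscr{L}_\epsilon^q(n)$ is already known, and Lemma \ref{lemma:intTau} is a compatibility statement about the canonical family $\{\mu_\epsilon^q\}$ produced from $\sigma_\epsilon^q$ via Riesz; it says nothing about the competitor $\nu_\epsilon$. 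So the sentence asserting that ``both $\nu_\epsilon|_{G_\epsilon^q}$ and $\mu_\epsilon|_{G_\epsilon^q}$ induce the same positive linear functional by uniform density'' states the conclusion rather than proving it. (In fairness, the paper's own uniqueness proof makes the analogous leap when it identifies $\int_{G_\epsilon^q}g_k\,\mathrm{d}\mu_i$ with $\tau_\epsilon^q(g_k)$ for an \emph{arbitrary} representing measure $\mu_i$.) A short genuine repair is available: $\mathscr{L}_\epsilon(n)$ contains the restrictions of all monomials $\prod_{\epsilon_i=2}x_i^{a_i}$ together with the constants, so the zero-extensions of $\nu_\epsilon$ and $\mu_\epsilon$ to the compact box $\overline{G_\epsilon}$ have the same polynomial moments and therefore coincide by Stone--Weierstrass and the uniqueness clause of Riesz's theorem on $\overline{G_\epsilon}$; restricting back to Borel subsets of $G_\epsilon$ gives $\nu_\epsilon=\mu_\epsilon$ and hence $\mu'=\mu$. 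Either insert such an argument or prove the local agreement directly; as written, the uniqueness half is incomplete.
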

\begin{proof}
For every $f\in \FP(n)$ and for every $\epsilon\in \Sigma$, 
$$
f=\bigvee_{\epsilon\in \Sigma} (f_{\epsilon}\wedge p_\epsilon)=\sum_{\epsilon\in \Sigma} (f_{\epsilon}\wedge p_\epsilon).
$$ 
Moreover, for distinct $\epsilon_1, \epsilon_2$, $(f_{\epsilon_1}\wedge p_{\epsilon_1})\wedge (f_{\epsilon_2}\wedge p_{\epsilon_2})=0$ (since $G_{\epsilon_1}\cap G_{\epsilon_2}=\emptyset$), whence $s((f_{\epsilon_1}\wedge p_{\epsilon_1})\wedge (f_{\epsilon_2}\wedge p_{\epsilon_2}))=0$. Thus, by axiom S2 
\begin{equation}\label{int1}
s(f)=s\left(\bigvee_{\epsilon\in \Sigma} (f_{\epsilon}\wedge p_\epsilon)\right)=
\sum_{\epsilon\in \Sigma}s(f_{\epsilon}\wedge p_\epsilon).
\end{equation}
Now, from the definition of $\tau_\epsilon$, Proposition \ref{prop:linear} and Lemma \ref{lemma:riesz} it follows that
\begin{equation}\label{int2}
s(f_{\epsilon}\wedge p_\epsilon)=s(p_\epsilon)\cdot \tau_\epsilon(f_{\epsilon})=s(p_\epsilon)\cdot\int_{G_\epsilon} f_{\epsilon}\;{\rm d}\mu_\epsilon
\end{equation}
for a Borel measure $\mu_\epsilon$ on the Borel subsets of $G_\epsilon$.

Let hence define $\mu$ on the Borel subsets of $[0,1]^n$ by the following stipulation: for every $X$ Borel subset of $[0,1]^n$,
$$
\mu(X)=\sum_{\epsilon\in \Sigma} s(p_\epsilon)\cdot \mu_\epsilon(X\cap G_\epsilon).
$$
Since $\sum_{\epsilon\in \Sigma}s(p_\epsilon)=s(\bigvee_{\epsilon\in \Sigma}p_\epsilon)=s(\top)=1$,  $\mu$ is  a convex combination of the  $\mu_\epsilon$'s. Moreover $\mu$ is defined for every $X$ since $G_\epsilon$ is a Borel subset of $[0,1]^n$ (recall Lemma \ref{lemma:borel}), whence $G_\epsilon\cap X$ is Borel as well. 
Thus, from (\ref{int1}) and (\ref{int2}),
$$
\begin{array}{lll}
s(f)&=&\displaystyle\sum_{\epsilon\in \Sigma} s(p_\epsilon)\cdot \tau_\epsilon(f_{\epsilon})\\
&=&\displaystyle\sum_{\epsilon\in \Sigma} \left(s(p_\epsilon)\cdot \displaystyle\int_{G_\epsilon}f_{\epsilon}\;{\rm d}\mu_\epsilon\right)\\
&=&\displaystyle\int_{\bigcup_{\epsilon\in \Sigma}G_\epsilon} \sum_{\epsilon\in \Sigma} f_\epsilon\;{\rm d}\left( s(p_\epsilon)\cdot \mu_\epsilon\right)\\
&=&\displaystyle\int_{[0,1]^n} f\;{\rm d}\mu.
\end{array}
$$
It is left to show that $\mu$ is unique. Suppose, by way of contradiction, that for a state $s$ there are two distinct regular Borel measures $\mu_1$ and $\mu_2$ such that, for every $f \in \FP(n)$, $s(f)=\int_{[0,1]^n} f\; {\rm d}\mu_1 = \int_{[0,1]^n} f\; {\rm d}\mu_2$. 

If  $\mu_1 \neq \mu_2$, then there must exist a  $c\in \mathscr{C}([0,1]^n)$  such that $\int_{[0,1]^n} c\; {\rm d}\mu_1 \neq \int_{[0,1]^n} c\; {\rm d}\mu_2$. Since  $[0,1]^n = \bigcup_\epsilon G_\epsilon$, there is an $\epsilon$ such that $\int_{G_\epsilon} c\; {\rm d}\mu_1 \neq \int_{G_\epsilon} c\; {\rm d}\mu_2$. Now, since $G_\epsilon = \bigcup_{q > 0} G^q_\epsilon$, we have $\lim_{q \to 0} \int_{G^q_\epsilon} c\; {\rm d}\mu_1 \neq \lim_{q \to 0} \int_{G^q_\epsilon} c\; {\rm d}\mu_2$. 

Without loss of generality, assume  $\lim_{q \to 0} \int_{G^q_\epsilon} c\; {\rm d}\mu_1 < \lim_{q \to 0} \int_{G^q_\epsilon} c\; {\rm d}\mu_2$. Hence, there is $q$ such that for every $q'$, $\int_{G^{q'}_\epsilon} c\; {\rm d}\mu_1 <  \int_{G^q_\epsilon} c\; {\rm d}\mu_2$. In particular,  $\int_{G^{q}_\epsilon} c\; {\rm d}\mu_1 <  \int_{G^q_\epsilon} c\; {\rm d}\mu_2$. 

Now, over $G^q_\epsilon$, $c$ is the limit of an increasing sequence $\{g_k\}_{k\in \mathbb{N}} \subseteq \mathscr{L}^q_\epsilon(n)$, and by the continuity of the integral, 
$$\sup_k \int_{G^{q}_\epsilon} g_k\; {\rm d}\mu_1 <  \sup_k \int_{G^q_\epsilon} g_k\; {\rm d}\mu_2$$
So there is $k$ such that, for every $k'$, 
$$\int_{G^{q}_\epsilon} g_{k'}\; {\rm d}\mu_1 <  \int_{G^q_\epsilon} g_k\; {\rm d}\mu_2$$
in particular, 
$$\int_{G^{q}_\epsilon} g_{k}\; {\rm d}\mu_1 <  \int_{G^q_\epsilon} g_k\; {\rm d}\mu_2.$$

But $g_k$ is the restriction of a function $g \in \mathscr{L}_\epsilon(n)$ on $G^{q}_\epsilon$ and hence $g$ is of the form $\sum_i \lambda_i f_i$, with $f_i \in  \mathscr{P}_\epsilon(n)$.
Thus, we can apply Theorem \ref{cor:int} and, since the $\tau_\epsilon^q$'s are uniquely determined from the state $s$, we get that $\tau^q_\epsilon(g_k)  = \int_{G^{q}_\epsilon} g_{k}\; {\rm d}\mu_1$ and $\tau^q_\epsilon(g_k)  = \int_{G^q_\epsilon} g_k\; {\rm d}\mu_2$, whence:
$$\tau^q_\epsilon(g_k) < \tau^q_\epsilon(g_k),$$
which is a contradiction. 
\end{proof}

We shall now see that the converse also holds.
\begin{theorem}\label{inv-rep}
For every regular Borel probability measure $\mu:\mathcal{B}([0,1]^n)\to[0,1]$, the function $s: \FP(n) \to [0,1]$ defined as 
$$
s(f)=\int_{[0,1]^n} f\; {\rm d}\mu.
$$
is a state of $\FP(n)$.
\end{theorem}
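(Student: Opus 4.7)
The plan is to verify the four axioms S1--S4 directly from elementary properties of the Lebesgue integral, relying on the fact (already established earlier in the paper) that every $f \in \FP(n) \cong \mathscr{P}(n)$ is Borel measurable, so the integral $\int_{[0,1]^n} f\,{\rm d}\mu$ makes sense for every regular Borel probability measure $\mu$. Once measurability is in hand, the bulk of the argument is routine measure theory, and the only axiom requiring a bit of thought is S4.

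First I would dispatch S1--S3. Axiom S1 is immediate: $s(1)=\mu([0,1]^n)=1$ and $s(0)=0$ because $\mu$ is a probability measure. Axiom S2 follows from the pointwise identity $\min(a,b)+\max(a,b)=a+b$ valid for all real numbers, hence $(f\wedge g)+(f\vee g)=f+g$ as functions $[0,1]^n\to[0,1]$, and linearity of the integral gives $s(f\wedge g)+s(f\vee g)=s(f)+s(g)$. Axiom S3 is simply the monotonicity of the Lebesgue integral applied to the pointwise inequality $f\leq g$.

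The only slightly delicate step is S4. Assume $f\neq 0$ and $s(f)=\int f\,{\rm d}\mu=0$. Since every $f\in\FP(n)$ satisfies $0\leq f\leq 1$ pointwise, the standard fact that a non-negative measurable function with zero integral vanishes $\mu$-almost everywhere yields $\mu(\{x\in[0,1]^n : f(x)>0\})=0$. Now observe that on $[0,1]_\Pi$ the double negation acts as $\neg\neg t=1$ if $t>0$ and $\neg\neg t=0$ if $t=0$, so pointwise $\neg\neg f$ coincides with the characteristic function $\chi_{\{f>0\}}$. Therefore
\[
s(\neg\neg f)=\int_{[0,1]^n}\chi_{\{f>0\}}\,{\rm d}\mu=\mu(\{f>0\})=0,
\]
as required. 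I expect no real obstacle in this verification; the main subtlety to make explicit is precisely the observation that $\neg\neg f$ is the indicator of the positivity set of $f$, which is what links the purely algebraic axiom S4 to the standard measure-theoretic fact that non-negative functions of zero integral vanish almost everywhere.
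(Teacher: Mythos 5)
Your proof is correct. For S1--S3 it coincides with the paper's argument (the pointwise identity $\min(a,b)+\max(a,b)=a+b$ together with linearity and monotonicity of the Lebesgue integral, after noting that product functions are Borel measurable). The genuine difference is in S4: the paper actually verifies the equivalent condition S4' of Proposition \ref{prop:S4}, splitting $\int (f\wedge p_\epsilon)\,{\rm d}\mu$ over $G_\epsilon$ and its complement and invoking the structural fact that a nonzero $f\wedge p_\epsilon$ is strictly positive on \emph{all} of $G_\epsilon$, so that a zero integral forces $\mu(G_\epsilon)=0$ and hence $\int p_\epsilon\,{\rm d}\mu=0$. You instead verify S4 directly, via the pointwise identity $\neg\neg f=\chi_{\{f>0\}}$ in $[0,1]_\Pi$ combined with the standard fact that a non-negative measurable function with zero integral vanishes $\mu$-almost everywhere. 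Your route is more elementary and slightly more general: it uses nothing about product functions beyond Borel measurability and the pointwise semantics of $\neg$, whereas the paper's argument leans on the functional representation (the all-or-nothing positivity of $f_\epsilon$ on each $G_\epsilon$) that it has already set up for the forward direction. Both arguments are complete; note only that yours does not even need the hypothesis $f\neq 0$, which is harmless since S4 is an implication guarded by that hypothesis.
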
 
\begin{proof}
First we observe that for each $f \in \FP(n)$, $\int_{[0,1]^n} f\; {\rm d}\mu \in [0,1]$, since $\mu$ is normalized to 1 and the functions of the free product algebra take values in $[0,1]$. In order to prove that $s$ is a state, we need to show that the integral of product functions satisfy the properties S1-S4:
\begin{itemize}
\item[(S1)] $\int_{[0,1]^n} {\bf 0} \; {\rm d}\mu = 0$ and $\int_{[0,1]^n} {\bf 1} \; {\rm d}\mu = 1$, where ${\bf 0}$ and ${\bf 1}$ are respectively the functions constantly equal to 0 and 1.
\item[(S2)] $\int_{[0,1]^n} (f \land g)\; {\rm d}\mu + \int_{[0,1]^n} (f \lor g)\; {\rm d}\mu = \int_{[0,1]^n} f \; {\rm d}\mu + \int_{[0,1]^n} g\; {\rm d}\mu$, for each $f, g \in \FP(n)$.
\item[(S3)] If $f, g \in \FP(n)$ are such that $f \leq g$, then $\int_{[0,1]^n} f\; {\rm d}\mu \leq \int_{[0,1]^n} g\; {\rm d}\mu$.
\item[(S4)] For every $\epsilon \in \Sigma$, for any $f \in \FP(n)$ such that $f \land p_{\epsilon}$ is non-zero, $\int_{[0,1]^n} f \land p_{\epsilon}\; {\rm d}\mu = 0$ implies $\int_{[0,1]^n} p_{\epsilon}\; {\rm d}\mu = 0$.
\end{itemize}
Properties (S1) and (S3) are well-known properties of the integral with respect to probability measures. 
About property (S2), it is not difficult to realize that, since the operations are defined pointwise, it holds that $f + g = \min (f,g) + \max (f,g)$, which settles the proof. In order to prove (S4), we shall observe that 
$$
\int_{[0,1]^n} (f \land p_{\epsilon})\; {\rm d}\mu = \int_{[0,1]^n \setminus G_{\epsilon}} (f \land p_{\epsilon})\; {\rm d}\mu + \int_{G_{\epsilon}} (f \land p_{\epsilon})\; {\rm d}\mu.
$$ 
The first integral is $0$ since the function $f \land p_{\epsilon}$ is $0$ outside $G_{\epsilon}$. Thus, if $\int_{[0,1]^n}(f\land p_\epsilon)\; {\rm d}\mu=0$ the second one must be $0$ as well, and since $f \land p_{\epsilon}$ is strictly positive over $G_{\epsilon}$ (if it is $0$ in one point, it is $0$ in the whole $G_{\epsilon}$,  \cite[Lemma 3.2.3]{ABG}) then it must be $\mu(G_{\epsilon}) = 0$, whence $\int_{[0,1]^n} p_{\epsilon}\; {\rm d}\mu = 0$.
\end{proof}
 Therefore, our main result can be stated in the following concise way.
\begin{corollary}\label{cor:main}
For every $n\in \mathbb{N}$, and for every map $s:\FP(n)\to[0,1]$ the following are equivalent:
\begin{itemize}
\item[(1)] $s$ is a state,
\item[(2)] there is a unique regular Borel measure $\mu$ such that, for every $f\in \FP(n)$,
$$
s(f)=\int_{[0,1]^n} f\; {\rm d}\mu.
$$
\end{itemize}
\end{corollary}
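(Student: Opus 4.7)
The plan is to observe that Corollary \ref{cor:main} is essentially a packaging of the two preceding theorems, namely Theorem \ref{int-repr} and Theorem \ref{inv-rep}, which together establish the two directions of the equivalence. Hence the ``proof'' amounts to combining these results and verifying that the uniqueness clause in (2) is handled.

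For the implication $(1) \Rightarrow (2)$, I would simply cite Theorem \ref{int-repr}, which gives, for any state $s$ on $\FP(n)$, the existence of a regular Borel probability measure $\mu$ on $[0,1]^n$ such that $s(f) = \int_{[0,1]^n} f \, {\rm d}\mu$ for every $f \in \FP(n)$, together with the uniqueness of such $\mu$. The construction of $\mu$ there, recall, goes through the auxiliary functionals $\tau_\epsilon$, their extensions $\tau_\epsilon^q$ and $\sigma_\epsilon^q$ on compact pieces $G_\epsilon^q$, and a Riesz-theoretic assembly on each $G_\epsilon$ followed by a convex combination weighted by the values $s(p_\epsilon)$ on the atoms of the Boolean skeleton.

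For the converse direction $(2) \Rightarrow (1)$, I would invoke Theorem \ref{inv-rep}, which verifies directly that, for any regular Borel probability measure $\mu$ on $[0,1]^n$, the functional $f \mapsto \int_{[0,1]^n} f \, {\rm d}\mu$ satisfies the state axioms S1--S4, using standard properties of the Lebesgue integral (normalization, monotonicity, the pointwise identity $f+g = (f\wedge g) + (f\vee g)$ for S2) and the positivity of $f \wedge p_\epsilon$ on $G_\epsilon$ together with \cite[Lemma 3.2.3]{ABG} for S4.

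There is essentially no further obstacle: no new ideas are needed beyond those already contained in Theorems \ref{int-repr} and \ref{inv-rep}. The only minor point worth explicitly noting is that the uniqueness asserted in (2) is precisely the uniqueness already proven at the end of Theorem \ref{int-repr}, so no additional argument is required. Thus the corollary can be stated as an immediate consequence of the conjunction of the two theorems, and the proof reduces to a single sentence citing them both.
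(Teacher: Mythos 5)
Your proposal is correct and matches the paper exactly: the corollary is stated there immediately after Theorems \ref{int-repr} and \ref{inv-rep} as their direct conjunction, with no further argument given or needed. Citing Theorem \ref{int-repr} for $(1)\Rightarrow(2)$ (including uniqueness) and Theorem \ref{inv-rep} for $(2)\Rightarrow(1)$ is precisely the intended proof.
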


\section{The state space and its extremal points}\label{sec:extremal}

In this section we shall prove that states of $\mathcal{F}_{\mathbb{P}}(n)$ are actually convex combinations of  product logic valuations. The idea is to show first that the state space is convex and compact and hence, by Krein-Milman theorem\footnote{
Recall that Krein-Milman theorem  says that if $X$ is a locally convex topological vector space and $K$ is a compact convex subset of $X$,  then $K$ coincides with the closure of the convex hull of its extreme points, see e.g. \cite[Theorem 5.17]{Goodearl}.}, every state is in the closure of convex combination of extremal. Second, we show that the extremal states coincide with  product logic valuations, i.e.\ homomorphisms of $\FP(n)$ into $[0,1]_\Pi$. 

Let $n$ be any positive integer. Let us denote by $\mathcal{H}(n)$  the set of homomorphisms of $\FP(n)$ to the product algebra $[0,1]_\Pi$;
$\mathcal{S}(n)$ stands for the set of  states of $\FP(n)$; $\mathcal{M}(n)$ denotes the set of  regular Borel probability measures on ${\bf B}([0,1]^n)$, the $\sigma$-algebra of Borel subsets of $[0,1]^n$. 
\begin{proposition}\label{homoprod}
For every $n\in \mathbb{N}$, there is a bijection between $\mathcal{H}(n)$ and $[0,1]^{n}$.
\end{proposition}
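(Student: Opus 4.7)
The plan is to invoke the universal property of the free product algebra together with the functional representation of $\FP(n)$ given by Theorem \ref{thm:repProd}. Recall that under this representation, $\FP(n) \cong \mathscr{P}(n)$, and the free generators $x_1,\ldots,x_n$ correspond to the projection maps $\pi_i\colon [0,1]^n \to [0,1]$, with all operations computed pointwise.

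First, I would define a map $\Phi\colon [0,1]^n \to \mathcal{H}(n)$ by sending a point $\mathbf{t} = (t_1,\ldots,t_n) \in [0,1]^n$ to the evaluation homomorphism $\mathrm{ev}_{\mathbf{t}}\colon \FP(n) \to [0,1]_\Pi$ defined by $\mathrm{ev}_{\mathbf{t}}(f) = f(\mathbf{t})$. Because the operations in $\mathscr{P}(n)$ are pointwise and $[0,1]_\Pi$ is a product algebra, $\mathrm{ev}_{\mathbf{t}}$ preserves all the operations and constants, hence lies in $\mathcal{H}(n)$. Injectivity of $\Phi$ is immediate: if $\mathbf{t}\neq \mathbf{t}'$, there is an index $i$ with $t_i \neq t_i'$, and then $\mathrm{ev}_{\mathbf{t}}(x_i) = t_i \neq t_i' = \mathrm{ev}_{\mathbf{t}'}(x_i)$.

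For surjectivity, I would use the universal property: since $\FP(n)$ is freely generated by $x_1,\ldots,x_n$ in the variety $\mathbb{P}$, any homomorphism $h\colon \FP(n) \to [0,1]_\Pi$ is completely determined by the tuple $(h(x_1),\ldots,h(x_n)) \in [0,1]^n$. Setting $t_i := h(x_i)$, the homomorphism $\mathrm{ev}_{\mathbf{t}}$ agrees with $h$ on the generators, hence on all of $\FP(n)$, so $h = \Phi(\mathbf{t})$.

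There is no real obstacle here; the statement is essentially the content of the universal property of free algebras once the functional representation of Theorem \ref{thm:repProd} is in hand. The only thing to double-check is that evaluation at a point of $[0,1]^n$ genuinely lands in $[0,1]_\Pi$ (not merely in some quotient or extension), but this is guaranteed because every $f \in \mathscr{P}(n)$ is by definition a $[0,1]$-valued function on $[0,1]^n$.
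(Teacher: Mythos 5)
Your proof is correct and follows essentially the same route as the paper: the paper also defines the evaluation map $\varphi_x(f)=f(x)$, notes injectivity via the generators, and proves surjectivity by setting $x=(h(\pi_1),\ldots,h(\pi_n))$ and using that every $f$ is a term $t_f[\pi_1,\ldots,\pi_n]$ so that $h(f)=t_f[h(\pi_1),\ldots,h(\pi_n)]=f(x)$. No discrepancies worth noting.
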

\begin{proof}
Let $\varphi : [0,1]^{n} \to [0,1]^{\FP(n)}$ be the map that associates to every $x \in [0,1]^{n}$ the function $\varphi_{x}:f \mapsto f(x)$, for every $f \in \FP(n)$.
Clearly, $\varphi_{x}$ is a homomorphism, and it is easy to see that if $x_{1} \neq x_{2}$ then $\varphi_{x_{1}} \neq \varphi_{x_{2}}$. Moreover, every homomorphism $h$ is such that $h = \varphi_{x}$, for some $x \in [0,1]^{n}$. Indeed, let $x=(h(\pi_{1}), \ldots, h(\pi_{n}))$, where $\pi_{i}$ denotes the $i$-th projection. Moreover, for every $f\in \FP(n)$ there is a term $t_f$ such that $f=t_f[\pi_1,\ldots,\pi_n]$. 
Thus $h(f)=h(t_f[\pi_1,\ldots,\pi_n])=t_f[h(\pi_1),\ldots, h(\pi_n)]=f(x)=\varphi_x(f)$. 
\end{proof}
It is quite obvious that $\mathcal{S}(n)$ and $\mathcal{M}(n)$ are convex subsets of $[0,1]^{\FP(n)}$ and $[0,1]^{{\bf B}([0,1]^n)}$ respectively. Furthermore, $\mathcal{M}(n)$ is clearly compact with respect to the subspace product topology. 
As for $\mathcal{S}(n)$, let us prove that it is closed, whence compact.

\begin{proposition}\label{prop:closed}
$\mathcal{S}(n)$ is closed in the Tychonoff cube $[0,1]^{\FP(n)}$.
\end{proposition}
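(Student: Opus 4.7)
The plan is to exhibit $\mathcal{S}(n)$ as the intersection of four subsets $C_1, C_2, C_3, C_4$ of the Tychonoff cube $[0,1]^{\FP(n)}$, each cut out by one of the defining axioms S1--S4, and to verify that each $C_i$ is closed in the product topology. For axioms S1--S3 this is routine, since every evaluation map $\mathrm{ev}_f : s \mapsto s(f)$ is a continuous coordinate projection. Accordingly, $C_1 = \mathrm{ev}_0^{-1}(\{0\}) \cap \mathrm{ev}_1^{-1}(\{1\})$ is an intersection of preimages of singletons; $C_2 = \bigcap_{f,g}(\mathrm{ev}_{f\wedge g} + \mathrm{ev}_{f\vee g} - \mathrm{ev}_f - \mathrm{ev}_g)^{-1}(\{0\})$ is an intersection of zero sets of continuous functions; and $C_3 = \bigcap_{f\leq g}\{s : s(g) - s(f) \geq 0\}$ is an intersection of closed half-spaces. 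An arbitrary intersection of closed sets is closed, so this handles three quarters of the argument.

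The main obstacle I expect is S4, because the implication ``$s(f)=0 \Rightarrow s(\neg\neg f) = 0$'' is not on its face a closed condition: its contrapositive ``$s(\neg\neg f) > 0 \Rightarrow s(f) > 0$'' involves a strict inequality, and a naive coordinatewise rewriting presents $C_4$ as a union of a closed set and an open set, which need not be closed. Rather than fight this directly, my plan is to bypass the pointwise-limit argument for S4 and instead leverage the integral representation proved earlier: by Corollary \ref{cor:main}, $\mathcal{S}(n)$ is exactly the image of $\mathcal{M}(n)$ under the map $\Phi \colon \mu \mapsto (f \mapsto \int f\,\mathrm{d}\mu)$, and $\mathcal{M}(n)$ is compact in its subspace product topology as already noted in the preamble to this section.

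Thus the key step becomes the continuity of $\Phi$ from the subspace product topology on $\mathcal{M}(n) \subseteq [0,1]^{\mathbf{B}([0,1]^n)}$ to the Tychonoff cube $[0,1]^{\FP(n)}$. Coordinatewise, this reduces to showing that for each fixed $f \in \FP(n)$ the functional $\mu \mapsto \int f\,\mathrm{d}\mu$ is continuous. Since every such $f$ is bounded and Borel measurable (Section \ref{sec:pre}), it can be uniformly approximated by simple functions $\sum_i c_i \chi_{A_i}$, whose integrals $\sum_i c_i \mu(A_i)$ are finite linear combinations of coordinate projections, hence continuous. A standard $\varepsilon/3$ argument exploiting the uniform $L^\infty$ bound then transfers continuity from simple functions to $f$. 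Once $\Phi$ is known to be continuous, $\mathcal{S}(n) = \Phi(\mathcal{M}(n))$ is the continuous image of a compact space, hence compact, and therefore closed in the Hausdorff cube $[0,1]^{\FP(n)}$, which is what we wanted.
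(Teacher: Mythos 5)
Your treatment of S1--S3 is fine, and you have correctly located the real difficulty in S4. The gap is in the step you lean on to dispose of S4: the compactness of $\mathcal{M}(n)$ in the subspace topology inherited from $[0,1]^{\mathbf{B}([0,1]^n)}$. Although the paper asserts this in passing, it is not available in the topology your argument requires. In the product (i.e.\ setwise--convergence) topology $\mathcal{M}(n)$ is not even closed in the ambient cube: already for $n=1$, any cluster point $\mu$ of the sequence of Dirac measures $\delta_{1/k}$ (such cluster points exist by compactness of $[0,1]^{\mathbf{B}([0,1])}$) satisfies $\mu((0,1/m])=1$ for every $m$ while $(0,1/m]\downarrow\emptyset$, so $\mu$ is a finitely additive charge that fails $\sigma$-additivity and lies outside $\mathcal{M}(1)$. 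Since the cube is Hausdorff, a non-closed subset cannot be compact. Nor can you retreat to the weak* topology, where $\mathcal{M}(n)$ \emph{is} compact, because there your map $\Phi$ ceases to be continuous: product functions such as $\neg\neg x=\chi_{(0,1]}$ are discontinuous on $[0,1]^n$, and indeed $\int\neg\neg x\,\mathrm{d}\delta_{1/k}=1$ for all $k$ although $\delta_{1/k}\to\delta_0$ weak*. Your $\varepsilon/3$ argument for continuity of $\Phi$ in the setwise topology is correct, but it is exactly incompatible with the topology in which compactness holds; the two properties you need cannot be secured simultaneously.

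This defect is not repairable, because the obstruction you identified at the outset is genuine: S4 really is not a closed condition on $[0,1]^{\FP(n)}$. Take $n=1$ and the evaluation states $s_i(f)=f(1/i)=\int f\,\mathrm{d}\delta_{1/i}$, which are states by Theorem \ref{inv-rep}. Since $\FP(1)$ is countable the cube $[0,1]^{\FP(1)}$ is metrizable, and the pointwise limit $s=\lim_i s_i$ exists, with $s(x^m)=\lim_i i^{-m}=0$, $s(\neg x)=0$ and $s(\neg\neg x)=1$. This $s$ satisfies S1--S3 (closed conditions, as you observed) but violates S4 at $f=x$, so $s\in\overline{\mathcal{S}(1)}\setminus\mathcal{S}(1)$. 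The same example locates the breakdown in the paper's own appendix proof: in Fact \ref{claimm} the interchange $\lim_i\sum_q\mu_i(G^q_\epsilon)=\sum_q\lim_i\mu_i(G^q_\epsilon)$ is illegitimate, since for $\mu_i=\delta_{1/i}$ the series $\sum_{q\in\mathbb{Q}\cap(0,1]}\mu_i(G^q_\epsilon)$ diverges, and indeed $\lim_i\int_{(0,1]}x\,\mathrm{d}\delta_{1/i}=0$ while $\mu_i((0,1])=1$ for all $i$, contradicting the Fact's conclusion. So no proof along either your route or the paper's can close this gap without first restricting the class of limits under consideration.
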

\begin{proof}
See Appendix A.
\end{proof}

By Proposition \ref{prop:closed} above and Krein-Milman theorem, $\mathcal{S}(n)$ and $\mathcal{M}(n)$ are generated by their extremal points. It is well-known that the extremal points of $\mathcal{M}(n)$ are Dirac measures, i.e. those maps $\delta_x:{\bf B}([0,1]^n)\to \{0,1\}$,  for each $x\in [0,1]^n$, such that $\delta_x(B)=1$ iff  $x\in B$ and $\delta_x(B)=0$ otherwise. 

Let us consider the map
$$
\delta: \mathcal{S}(n)\to\mathcal{M}(n)
$$ 
which associates, to each state $s\in \mathcal{S}(n)$ the unique regular Borel measure $\mu\in \mathcal{M}(n)$ provided by Theorem \ref{int-repr}, such that for every $f \in \FP(n)$, $s(f)=\int_{[0,1]^n} f\; {\rm d}\mu.$
The following holds:
\begin{proposition}\label{prop:affine}
For every $n\in \mathbb{N}$, the map $\delta:\mathcal{S}(n)\to \mathcal{M}(n)$ defined as above is bijective and affine.
\end{proposition}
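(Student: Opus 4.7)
The plan is to derive both bijectivity and affineness directly from the integral representation theorems established earlier, so the proof should be almost immediate.

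First, I would address bijectivity. Injectivity is built into the definition of $\delta$ together with Theorem \ref{int-repr}: if $s_1 \neq s_2$ then there exists $f \in \FP(n)$ with $s_1(f) \neq s_2(f)$, so $\int f \,\mathrm{d}\delta(s_1) \neq \int f\,\mathrm{d}\delta(s_2)$, forcing $\delta(s_1) \neq \delta(s_2)$. Surjectivity is the content of Theorem \ref{inv-rep}: given any $\mu \in \mathcal{M}(n)$, the functional $s_\mu(f) := \int_{[0,1]^n} f\,\mathrm{d}\mu$ is a state, and by the uniqueness clause of Theorem \ref{int-repr}, $\delta(s_\mu) = \mu$. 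Together these give the claimed bijection (this is essentially a repackaging of Corollary \ref{cor:main}).

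For affineness, fix states $s_1, s_2 \in \mathcal{S}(n)$ and $\lambda \in [0,1]$, and let $s := \lambda s_1 + (1-\lambda) s_2$. A direct check using S1--S4 shows that $s$ is again a state (convex combinations of states are easily seen to be states because S1--S3 are preserved by convex combinations, and S4 is preserved as well: if $f \neq 0$ and $s(f) = 0$ then $s_1(f) = s_2(f) = 0$, hence $s_1(\neg\neg f) = s_2(\neg\neg f) = 0$, and so $s(\neg\neg f) = 0$). Setting $\mu_i := \delta(s_i)$, linearity of the Lebesgue integral yields, for every $f \in \FP(n)$,
$$
\int_{[0,1]^n} f \,\mathrm{d}\bigl(\lambda \mu_1 + (1-\lambda)\mu_2\bigr) \;=\; \lambda \int f\,\mathrm{d}\mu_1 + (1-\lambda)\int f\,\mathrm{d}\mu_2 \;=\; \lambda s_1(f) + (1-\lambda)s_2(f) \;=\; s(f).
$$
Thus $\lambda \mu_1 + (1-\lambda)\mu_2$ is a regular Borel probability measure representing $s$. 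The uniqueness part of Theorem \ref{int-repr} then forces $\delta(s) = \lambda\delta(s_1) + (1-\lambda)\delta(s_2)$, which is exactly the affineness of $\delta$.

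There is no real obstacle here: the only subtlety is remembering to invoke the uniqueness of the representing measure supplied by Theorem \ref{int-repr} to conclude the affineness identity. Everything else reduces to linearity of the integral and the two characterizations of states already proved. The most delicate background fact is the uniqueness in Theorem \ref{int-repr}, which carries the technical weight of the entire argument.
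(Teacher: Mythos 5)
Your proof is correct and follows essentially the same route as the paper: injectivity and surjectivity from Theorems \ref{int-repr} and \ref{inv-rep}, and affineness via linearity of the integral combined with the uniqueness of the representing measure. If anything, you are slightly more careful than the paper's own proof, which leaves implicit both the check that a convex combination of states is a state and the final appeal to uniqueness.
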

\begin{proof}
Injectivity follows from Theorem \ref{int-repr}, and surjectivity from Theorem \ref{inv-rep}.
In order to prove that $\delta$ is affine, let us suppose that $s = \lambda s_{1} + (1 - \lambda)s_{2}$, with $\lambda \in [0,1]$. Then we have, for every $f \in \FP(n)$, 
$$
\begin{array}{lll}
s(f) &= &\lambda s_{1}(f) + (1 - \lambda)s_{2}(f)\\
& =& \lambda \displaystyle\int_{[0,1]^n} f\; {\rm d}\mu_{1} + (1 - \lambda) \displaystyle\int_{[0,1]^n} f\; {\rm d}\mu_{2} \\
&= &\displaystyle\int_{[0,1]^n} f\; {\rm d} (\lambda  \mu_{1}) +  \displaystyle\int_{[0,1]^n} f\; {\rm d} [(1 - \lambda) \mu_{2}] \\
& =& \displaystyle\int_{[0,1]^n} f\; {\rm d} [(\lambda \mu_{1}) + (1 - \lambda) \mu_{2}].
\end{array}
$$ 
Thus, $\delta(s) = \delta(\lambda s_{1} + (1 - \lambda) s_{2}) = \lambda \delta(s_{1}) + (1 - \lambda) \delta(s_{2})$, which proves that $\delta$ is affine.
\end{proof}

Before showing the main result of this section (Theorem \ref{homo-ext} below), let us point out an immediate but interesting consequence of Proposition \ref{prop:affine} above which reveals a remarkable analogy between states of MV-algebras and states of product algebras. Indeed, the Kroupa-Panti theorem shows that for every positive integer $n$, the state space $\mathcal{S}_{MV}(n)$ of the  free MV-algebra over $n$-free generators is affinely isomorphic to $\mathcal{M}(n)$. Thus, in particular, $\mathcal{S}(n)$ and $\mathcal{S}_{MV}(n)$ are affinely isomorphic via an isomorphism which is defined in the obvious way. 

The main result of this section hence reads as follows.
\begin{theorem}\label{homo-ext}
The following are equivalent for a state $s: \FP(n) \to [0,1]$
\begin{enumerate}
\item $s$ is extremal;
\item $\delta(s)$ is a Dirac measure;
\item $s\in \mathcal{H}(\FP(n))$. 
\end{enumerate}
\end{theorem}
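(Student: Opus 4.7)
The strategy is to use the affine bijection $\delta: \mathcal{S}(n) \to \mathcal{M}(n)$ established in Proposition \ref{prop:affine} to transfer the classical characterization of extremal Borel probability measures (Dirac measures) over to states, and then identify Dirac measures with product logic valuations via the integral representation and Proposition \ref{homoprod}.

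\smallskip

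\noindent \textbf{(1) $\Leftrightarrow$ (2).} I would first observe that any affine bijection between two convex sets carries extremal points to extremal points. Indeed, if $s \in \mathcal{S}(n)$ and $s = \lambda s_1 + (1-\lambda) s_2$ with $\lambda\in(0,1)$ and $s_1\neq s_2$, then by affinity $\delta(s) = \lambda \delta(s_1) + (1-\lambda)\delta(s_2)$, and by injectivity $\delta(s_1)\neq \delta(s_2)$, so $\delta(s)$ is not extremal; the converse direction is identical, using surjectivity to lift a nontrivial convex decomposition of $\delta(s)$ back to one of $s$. Since it is classical (and noted just before the statement) that the extremal points of $\mathcal{M}(n)$ are exactly the Dirac measures $\delta_x$ ($x\in[0,1]^n$), this yields the equivalence of (1) and (2).

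\smallskip

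\noindent \textbf{(2) $\Rightarrow$ (3).} Suppose $\delta(s) = \delta_x$ for some $x\in[0,1]^n$. Then, by the integral representation (Theorem \ref{int-repr}),
\[
s(f) \;=\; \int_{[0,1]^n} f \, {\rm d}\delta_x \;=\; f(x) \;=\; \varphi_x(f) \qquad \text{for every } f\in \FP(n),
\]
where $\varphi_x$ is the evaluation homomorphism from Proposition \ref{homoprod}. Thus $s = \varphi_x \in \mathcal{H}(n)$.

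\smallskip

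\noindent \textbf{(3) $\Rightarrow$ (2).} Conversely, let $s \in \mathcal{H}(n)$. By Proposition \ref{homoprod} there exists $x\in[0,1]^n$ with $s = \varphi_x$, i.e.\ $s(f)=f(x)$ for every $f\in \FP(n)$. Plainly $f(x) = \int f\, {\rm d}\delta_x$, and moreover the functional $f\mapsto \int f\,{\rm d}\delta_x$ is a state by Theorem \ref{inv-rep}; since Theorem \ref{int-repr} guarantees that the measure representing a state is unique, we conclude $\delta(s) = \delta_x$, which is Dirac.

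\smallskip

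I do not anticipate a serious obstacle here: the whole argument rests on machinery already in place, namely the affine bijection $\delta$, the one-one correspondence between points of $[0,1]^n$ and homomorphisms via evaluation, and the standard description of extremal regular Borel probability measures. The only point to double-check is that evaluations $\varphi_x$ are genuinely states (so that (3) makes sense inside $\mathcal{S}(n)$), but this is immediate because $\varphi_x(f) = \int f\, {\rm d}\delta_x$ and Theorem \ref{inv-rep} certifies every such integral is a state.
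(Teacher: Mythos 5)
Your proposal is correct and follows essentially the same route as the paper: the equivalence of (1) and (2) via the affine bijection $\delta$ of Proposition \ref{prop:affine} together with the classical fact that extremal regular Borel probability measures are Dirac measures, and the equivalence of (2) and (3) via the integral representation of Theorem \ref{int-repr} and the evaluation homomorphisms of Proposition \ref{homoprod}. Your treatment of $(3)\Rightarrow(2)$ is in fact slightly more explicit than the paper's one-line appeal to Proposition \ref{homoprod}, since you invoke the uniqueness clause of Theorem \ref{int-repr} to conclude $\delta(s)=\delta_x$, but the underlying argument is the same.
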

\begin{proof}
$(1) \Rightarrow (2)$. 
If $s$ is extremal then its corresponding measure $\delta(s)$ is extremal in the space of Borel probability measures on $[0,1]$ since by Proposition \ref{prop:affine} $\delta$ is affine,  whence it preserves extremality. 
Extremal Borel measures on $[0,1]$ are exactly Dirac measures (see for instance \cite[Corollary 10.6]{Mu12}), thus $\delta(s) = \delta_{x}$ for some $x \in [0,1]^{n}$.
\vspace{.2cm}

$(2) \Rightarrow (1)$. If $s$ is such that $\delta(s)$ is a Dirac measure, then it is extremal. Indeed, by way of contradiction, let us suppose that $s$ can be expressed as a convex combination of two states $s_{1}, s_{2}$, that is, $s = \lambda s_{1} + (1- \lambda) s_{2},\; \lambda \in (0,1),$ but this would mean that $\delta(s) = \delta(\lambda s_{1} + (1 - \lambda) s_{2}) = \lambda \delta(s_{1}) + (1 - \lambda) \delta(s_{2})$, which contradicts the extremality of $\delta(s)$.

Hence, we proved that $(1) \Leftrightarrow (2)$.
\vspace{.2cm}

$(3) \Rightarrow (2)$. Follows from Proposition \ref{homoprod}.
\vspace{.2cm}

$(2) \Rightarrow (3)$. Let us suppose that $\delta(s)$ is a Dirac measure $\delta(s) = \delta_{x}$, and let us prove that $s$ is a homomorphism. By Theorem \ref{int-repr}, for every $f \in \FP(n)$, 
$$
s (f) = \int_{[0,1]^n} f\; {\rm d} \,\delta_{x} =  f(x),
$$ 
thus clearly $s$ is a homomorphism to $[0,1]$.

Hence we proved $(2) \Leftrightarrow (3)$, which settles the proof.
\end{proof}


Thus, via Krein-Milman theorem, we obtain the following:

\begin{corollary}\label{cor:CC}
The state space $\mathcal{S}(n)$ is the convex closure of the set of product homomorphisms from $\mathcal{F}_{\mathbb{P}}(n)$ into $[0,1]$.
\end{corollary}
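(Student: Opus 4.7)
The plan is to invoke the Krein-Milman theorem in the concrete setting already prepared by the preceding results. The ingredients are all in place: $\mathcal{S}(n)$ sits inside the Tychonoff cube $[0,1]^{\FP(n)}$, which is a compact Hausdorff locally convex topological vector space under pointwise operations. I would first record that $\mathcal{S}(n)$ is convex, which is immediate from Definition \ref{def-State}, since each defining condition (S1)--(S4) is preserved by taking convex combinations of two states (the linear ones (S1)--(S3) trivially, and (S4) because if $s = \lambda s_1 + (1-\lambda) s_2$ with $\lambda\in[0,1]$ and $s(f)=0$ for $f\neq 0$, then both $s_1(f)$ and $s_2(f)$ must vanish, forcing $s_1(\neg\neg f) = s_2(\neg\neg f) = 0$, and hence $s(\neg\neg f)=0$).

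Next, I would cite Proposition \ref{prop:closed} to conclude that $\mathcal{S}(n)$ is a closed subset of the compact cube $[0,1]^{\FP(n)}$, hence compact. Together with convexity, this puts us in the hypothesis of Krein-Milman, so
\[
\mathcal{S}(n) = \overline{\mathrm{conv}}\bigl(\mathrm{ext}\,\mathcal{S}(n)\bigr),
\]
where $\mathrm{ext}\,\mathcal{S}(n)$ denotes the set of extreme points of $\mathcal{S}(n)$ and the closure is taken in the product topology.

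Finally, I would apply Theorem \ref{homo-ext}, which identifies $\mathrm{ext}\,\mathcal{S}(n)$ precisely with $\mathcal{H}(n)$, the set of product homomorphisms $\FP(n)\to[0,1]$. Substituting into the Krein-Milman identity yields that $\mathcal{S}(n)$ coincides with the convex closure of $\mathcal{H}(n)$, which is exactly the claim.

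There is no real obstacle here, as every nontrivial ingredient has been established earlier: the integral representation (Theorems \ref{int-repr} and \ref{inv-rep}) gave the affine bijection $\delta$ with $\mathcal{M}(n)$, which in turn was used to identify extremal states with Dirac-type evaluations, i.e.\ homomorphisms. The only mild care to take is to make sure the ambient topological vector space structure is the correct one (pointwise convergence on $\FP(n)$), so that the topology of $\mathcal{S}(n)$ used in Proposition \ref{prop:closed} matches the one in which Krein-Milman is applied; this is transparent since both refer to the subspace topology inherited from $[0,1]^{\FP(n)}$.
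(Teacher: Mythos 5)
Your proposal is correct and follows essentially the same route as the paper: convexity of $\mathcal{S}(n)$ (which the paper treats as obvious, while you usefully spell out the preservation of S4 under convex combinations), closedness via Proposition \ref{prop:closed}, Krein--Milman, and the identification of extreme points with homomorphisms from Theorem \ref{homo-ext}. The only cosmetic slip is calling the cube $[0,1]^{\FP(n)}$ itself a topological vector space; one should say it is a compact convex subset of the locally convex space $\mathbb{R}^{\FP(n)}$ with the product topology, which is what both you and the paper implicitly use.
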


\begin{remark}
For every $n$, the set of extremal states of the free MV-algebra $\mathcal{F}_{\mathbb{MV}}(n)$, with the topology inherited by restriction from the product space $[0,1]^{\mathcal{F}_{\mathbb{MV}}(n)}$, constitutes a compact Hausdorff space ${\rm ext}(\mathcal{S}_{MV}(n))$ which is homeomorphic to $[0,1]^n$ (see \cite[Theorem 2.5]{Mu} and \cite[Corollary 10.6]{Mu12}). Thus, ${\rm ext}(\mathcal{S}_{MV}(n))$ is closed.
A similar result for extremal product states is false. Indeed, as a consequence of \cite[Theorem 4.6]{KuMu},
$\mathcal{H}(n)={\rm ext}(\mathcal{S}(n))$ is not closed in the Tychonoff cube $[0,1]^{\FP(n)}$. Thus, it cannot be homeomorphic to $[0,1]^n$. 
However, Theorem \ref{homo-ext} still provides us with a bijection between ${\rm ext}(\mathcal{S}_{MV}(n))$, ${\rm ext}(\mathcal{S}(n))$ and $[0, 1]^n$. 
\end{remark}

\section{A logic to reason about the probability of product logic events}\label{sec:logic}

In this section we define a logic to reason about probabilities (in the sense of states) of product logic events. The idea is to follow the same fuzzy logic approach that has been used in the literature to formalise reasoning with different models of uncertainty, like probabilistic \cite{HGE,H98,FG07}, possibilistic \cite{FGM11} or evidential models \cite{FGM13}. 

The logic we will define, $FP(\Pi, \L_\Delta)$, is a two-tiered logic: an inner logic to represent the events (which will be product logic), and an outer logic to reason about the probability of the inner logic events. To express the additivity property of states, the outer logic will be a 
suitable (modal-like) extension of {\L}ukasiewicz logic:  
for each product logic proposition $\varphi$, $P\varphi$ will be an atomic modal formula in the outer logic that will be read as ``$\varphi$ is probable''. Note that we will not allow the nesting of the modality $P$. 

In more detail, the language of $FP(\Pi, \L_\Delta)$ contains the following sets of formulas: 

\begin{itemize}

\item Non-modal formulas: built from a countable set of propositional variables using product logic connectives, i.e.\ propositional product logic formulas.

\item Atomic modal formulas: of the form $P \varphi$, where $\varphi$ is a non-modal formula (of product logic).
 
\item Modal formulas: built from atomic modal formulas using $\L_\Delta$ logic\footnote{$\L_\Delta$ is the expansion of {\L}ukasiewicz logic   
with the Baaz-Monteiro projection connective $\Delta$, see \cite[\S 2.4]{H98}.} connectives. 

\end{itemize}
We will denote by $Fm$ the set of non-modal formulas and by $PFm$ the set of all modal formulas of $FP(\Pi, \L{_\Delta})$. In the following, by a {\em modal theory} we will refer to an arbitrary set of modal formulas. 

We will provide semantics for $FP(\Pi, \L{_\Delta})$ based on states on product logic formulas, that is, by mappings $\sigma: Fm \to [0, 1]$ satisfying the following conditions: 

\begin{itemize}
\item[S1.] $\sigma(\top)=1$ and $\sigma(\bot)=0$,
\item[S2.] $\sigma(\varphi \wedge \psi)+\sigma(\varphi \vee \psi)=\sigma(\varphi)+\sigma(\psi)$,
\item[S3.] If $\vdash_\Pi \varphi\to \psi$, then $\sigma(\varphi)\leq \sigma(\psi)$,
\item[S4.] If $\not \vdash_\Pi \neg\varphi$, then $\sigma(\varphi) = 0$ implies $\sigma(\neg\neg \varphi) = 0$.
\end{itemize}
Note that, due to S3, logical equivalence is preserved by states on formulas, that is, if $\varphi$ and $\psi$ are logically equivalent product logic formulas, then necessarily $\sigma(\varphi)  = \sigma(\psi)$ for any state $\sigma$. 

\begin{remark}  It is worth noticing that, if  $\sigma$ is a state on $Fm$, its  restriction $\sigma^n$ on $FM^n$, the set of formulas built from a finite subset of propositional variables $\{p_1, \ldots, p_n\}$, is again a state (in the sense that $\sigma^n: FM^n \to [0, 1]$ satisfies all the above properties $S1$ - $S4$).  Therefore, there is a one-one correspondence between states on formulas of $FM^n$ and states on the free algebra $\FP(n)$. 
\end{remark}

Following \cite[\S 7.2]{Montagna}, interpretations for  $FP(\Pi, \L{_\Delta})$-formulas will consist of pairs $(e, \sigma)$, where $e$ is a $[0, 1]$-evaluation of propositional variables, that extends to propositional formulas as usual with product logic truth-functions from $[0, 1]_\Pi$, and $\sigma$ is a state on $Fm$. Every interpretation  $(e, \sigma)$ assigns a truth-value $\| \Phi \|_{e, \sigma} \in [0,1]$ to every $FP(\Pi, \L{_\Delta})$-formula $\Phi$ as follows: 

\begin{itemize}
\item If $\Phi = \varphi$ is a propositional formula from $Fm$, $\| \Phi \|_{e, \sigma}  = e(\varphi)$.
\item if $\Phi = P \varphi$ is an atomic modal formula, $\| \Phi \|_{e, \sigma}  = \sigma(\varphi)$.
\item If  $\Phi$ is a propositional combination with $\L_\Delta$ connectives, then  $\| \Phi \|_{e, \sigma}$ is computed from its atomic modal subformulas by using their truth-functions from $[0, 1]_{\textrm{\L}_\Delta}$.
\end{itemize}



%
%
%
%
%
Note that for non-modal formulas $\varphi \in Fm$, $\| \varphi \|_{e, \sigma}$ only depends on $e$, while for modal formulas $\Phi \in PFm$, $\| \Phi \|_{e, \sigma}$ only depends on the state $\sigma$. Therefore, for the sake of a simpler notation, we will also write $\| \varphi \|_{e}$ and $\| \Phi \|_{\sigma}$ respectively. Now we define the following notion of logical consequence for $FP(\Pi, \L{_\Delta})$. 

\begin{definition}
Let  $\Gamma \cup \{\Phi\}$ be a (arbitrary) set of $FP(\Pi, \L{_\Delta})$-formulas.  Then define $\Gamma \models_{\FPL} \varphi$  if, for any interpretation $(e, \sigma)$, it holds that if $\| \Psi \|_{e, \sigma} = 1$ for all $\Psi \in \Gamma$, then $\| \Phi \|_{e, \sigma} = 1$. 
\end{definition}

%
%

As for the axiomatization of  $ \models_{\FPL}$, we need to properly capture properties $S1 - S4$ of states in terms of product logic formulas.  
Actually $S1, S2$, and $S3$ can be suitably encoded only using  the  language of {\L}ukasiewicz logic 
 with the following schemes: 

\begin{itemize}

%

\item[P1.] $P \top$, $\neg P \neg \bot$,


\item[P2.] $P(\varphi \lor \psi) \leftrightarrow P \varphi \oplus (P \psi  \ominus P(\varphi \land \psi))$,

\item[P3.] $P\varphi \to P \psi$, for $\varphi, \psi$ such that $\vdash_\Pi \varphi \to \psi$.



\end{itemize}
However, the axiom S4 of Definition \ref{def-State} cannot be written within the language of {\L}ukasiewicz logic, since this logic cannot express that a formula is not totally false. This is the reason for considering   {\L}$_\Delta$, the expansion of  {\L}ukasiewicz logic with the well-known Monteiro-Baaz $\Delta$ operator, for the outer logic. Indeed, using the language of  {\L}$_\Delta$,  
then S4 can be encoded by the following scheme: 

\begin{itemize}

\item[P4.] $\Delta(\neg P\varphi) \to \neg P\neg\neg \varphi$ , for $\varphi$ such that  $\not \vdash_\Pi \neg \varphi$ .


\end{itemize}
As outlined in \cite[\S 7.2]{Montagna} and in the proof of Theorem \ref{soundComplProb} below, the usual technique to prove completeness for a probabilistic modal logic as $FP(\Pi, \L{_\Delta})$ consists, mainly, in the following steps: (1) translating, at the propositional level of the outer logic (in this case $\L_\Delta$), all  modal axioms and rules; (2) using the completeness of the outer logic with respect to a standard algebra, build a model for the probabilistic modal logic. The typical problem of this strategy is that, as in this specific case, the propositional translation of the modal axioms leads to an infinite theory for $\L_\Delta$ which, however, is not strongly complete with respect to the algebra on $[0,1]$, i.e., if $\Gamma\cup\{\varphi\}$ is an infinite set of propositional formulas of $\L_\Delta$, it might happen that, although every model of $\Gamma$ is a model of $\varphi$, $\varphi$ cannot be proved from $\Gamma$.
%
%
Therefore, we will need to equip the outer logic $\L_\Delta$ with the following infinitary rule that makes it strongly complete (see \cite{Montagna} for full details): 
$$(IR) \; \frac{\Theta\vee(\Phi \to \Psi^n), \mbox{ for each } n \in \mathbb{N}}{\Theta\vee(\neg \Phi \lor \Psi)}$$

For this reason, we will henceforth extend the outer logic $\L_\Delta$ with the previous rule $(IR)$ and we will denote it by $\L_\Delta^+$. 

\begin{definition}  $FP(\Pi,\L_\Delta)$ is the logic, in the language defined above, whose axioms and rules are the following:  
\begin{itemize}

\item[($\Pi$)] Axioms and rule of product logic for non-modal formulas

\item[(\L$_\Delta$)] Axioms and rules of {\L}$_\Delta$ for modal formulas

\item[(P1)] $P \top, \neg P \neg \bot$

\item[(P2)] $P(\varphi \lor \psi) \leftrightarrow P \varphi \oplus (P \psi  \ominus P(\varphi \land \psi))$


\item[(P3)] $P\varphi \to P \psi$, \quad for every $\varphi, \psi$ such that $\vdash_\Pi \varphi \to \psi$

\item[(P4)] $\Delta(\neg P\varphi) \to \neg P\neg\neg \varphi$,  \quad for every $\varphi$ such that   $\not \vdash_\Pi \neg \varphi$

\item[(IR)]  from $\{\Theta\vee(\Phi \to \Psi^n) \mid  n  \in \mathbb{N}\}$ infer $\Theta\vee(\neg \Phi \lor \Psi)$,  \quad  for $\Theta,\Phi, \Psi \in PFm$

\end{itemize}
\end{definition}
Note that since we have two arrows $\to$ in $FP(\Pi, \L_\Delta)$, the inner one (from product logic $\Pi$) and the outer one (from the logic \L$_\Delta$), and two Modus Ponens
rules, one for each arrow. Moreover, in the outer logic we have the necessitation rule for $\Delta$ for modal formulas. 

Notice that in $FP(\Pi, \L_\Delta)$ the presence of the infinitary rule (IR) requires to slightly change the notion of {\em proof} in such a way ensuring that    
if,  for every $n\inÊ\mathbb{N}$,  we have a proof  of $\Theta\vee(\Phi\to \Psi^n)$ from the same set of premises $\Gamma$, then we also have a proof of $\Theta\vee(\neg\Phi\vee \Psi)$ from $\Gamma$ as well.\footnote{Formally, a proof of a formula $\Phi$ from $\Gamma$ is defined as a well-founded tree (i.e. with of possibly infinite width and depth, but with no branches of infinite length) where (i) the root $\Phi$ can have an infinite degree, (ii) the leaves are formulas from $\Gamma$ or instances of the axioms of $FP(\Pi, \L_\Delta)$, and (iii) for each node of the tree with a formula $\Psi$ there is an inference rule in $FP(\Pi, \L_\Delta)$ deriving $\Psi$ from its predecessors.}


In the following we restrict ourselves to prove completeness for deductions from modal theories.

\begin{theorem}[Soundness and Completeness] \label{soundComplProb}
Let  $\Gamma \cup \{\Phi\} \subset PFm$ be an (arbitrary) modal theory. Then, $\Gamma \vdash_{\FPL} \Phi$ iff $\Gamma \models_{\FPL} \Phi$, that is, iff for every state $\sigma$, if  $\| \Psi \|_{\sigma} = 1$ for all $\Psi \in \Gamma$, then $\| \Phi \|_{ \sigma} = 1$ as well. 
\end{theorem}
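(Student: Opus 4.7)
The plan is to prove both directions by the standard two-tiered propositional translation technique, pivoting on the strong standard completeness of the outer logic $\L_\Delta^+$ (that is, $\L_\Delta$ plus $(IR)$) with respect to the standard $\L_\Delta$-algebra on $[0,1]$.

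For \emph{soundness}, I would fix an interpretation $(e,\sigma)$ and verify that every axiom evaluates to $1$ and every rule preserves degree $1$. The product logic axioms are handled by the evaluation of non-modal formulas, and the $\L_\Delta$-axioms are handled by the fact that, for a fixed state $\sigma$, modal formulas are combined via $\L_\Delta$-truth functions on $[0,1]$. Axioms $(P1)$--$(P3)$ are faithful encodings of the state conditions S1--S3; for $(P4)$ it suffices to note that $\|\Delta\neg P\varphi\|_\sigma = 1$ iff $\sigma(\varphi) = 0$, so the implication in $(P4)$ precisely encodes S4. The only non-routine rule is $(IR)$, whose soundness follows from the Archimedean property of $[0,1]_{\L_\Delta}$.

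For \emph{completeness} I argue by contraposition. To each non-modal formula $\varphi\in Fm$ I associate a fresh propositional variable $x_\varphi$, and define a translation $(\cdot)^{*}:PFm\to Fm_{\L_\Delta}$ by replacing each atomic modal subformula $P\varphi$ with $x_\varphi$ and commuting with the $\L_\Delta$-connectives. Let $T^{*}$ be the $\L_\Delta^+$-theory consisting of the translations of all instances of $(P1)$--$(P4)$. A routine induction on the shape of derivations shows that $\Gamma\vdash_{\FPL}\Phi$ iff $T^{*}\cup\Gamma^{*}\vdash_{\L_\Delta^+}\Phi^{*}$. Assuming $\Gamma\not\vdash_{\FPL}\Phi$, the strong standard completeness of $\L_\Delta^+$ (which is exactly why $(IR)$ is added, cf.\ \cite[\S 7.2]{Montagna}) yields an evaluation $v$ into $[0,1]_{\L_\Delta}$ such that $v(\Psi)=1$ for every $\Psi\in T^{*}\cup\Gamma^{*}$ while $v(\Phi^{*})<1$. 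Define $\sigma:Fm\to[0,1]$ by $\sigma(\varphi)=v(x_\varphi)$; the key point is to verify that $\sigma$ is a state. Properties S1--S3 follow directly from the translations of $(P1)$--$(P3)$, using the identity $a+b=\min(a,b)+\max(a,b)$ on $[0,1]$. The delicate case is S4: if $\not\vdash_\Pi\neg\varphi$ and $\sigma(\varphi)=0$, then $v(x_\varphi)=0$, whence $v(\Delta\neg x_\varphi)=1$, and the corresponding instance of $(P4)$ in $T^{*}$ forces $v(x_{\neg\neg\varphi})=0$, i.e.\ $\sigma(\neg\neg\varphi)=0$. Finally, for any propositional evaluation $e$, the interpretation $(e,\sigma)$ satisfies $\|\Psi\|_{e,\sigma}=v(\Psi^{*})$ for every $\Psi\in PFm$, and therefore $\Gamma\not\models_{\FPL}\Phi$, as desired.

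The main obstacle is managing the infinitary machinery: the theory $T^{*}$ is intrinsically infinite (one axiom per pair of $\Pi$-provably related formulas, and one per $\varphi$ with $\not\vdash_\Pi \neg\varphi$), while ordinary finitary $\L_\Delta$ is not strongly standard-complete. This is precisely the role of $(IR)$ and of the accompanying generalisation of derivations to well-founded trees; the non-trivial technical work is thus concentrated in the strong completeness theorem for $\L_\Delta^+$ that one imports from the literature. A secondary, bookkeeping-level subtlety is the lifting of $(IR)$ across the translation $(\cdot)^{*}$, which is handled by replaying derivation trees node by node.
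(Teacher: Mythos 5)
Your proposal is correct and follows essentially the same route as the paper: both arguments reduce the problem via the propositional translation $(\cdot)^*$ to the strong standard completeness of $\L_\Delta^+$ (imported from Montagna), and both hinge on the observation that $\L_\Delta^+$-models of the translated axioms $(P1)$--$(P4)$ correspond exactly to states on non-modal formulas. Your write-up is in fact somewhat more explicit than the paper's (notably in checking that $(P4)$ forces S4 via $\Delta$, and in noting the soundness of $(IR)$), but the decomposition and the key external ingredient are identical.
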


%
%
%
%

\begin{proof}  
Soundness is easy. As for completeness, we apply the usual technique in this kind of modal-like fuzzy probabilistic  logics of inductively defining a translation mapping $(\ )^*$ from the modal language $PFm$ into the propositional  \L$_\Delta^+$-language built from atomic modal formulas $P\varphi$ taken as propositional variables (see \cite{FG07,FGM11b} and \cite[\S 7.2]{Montagna}). 

Accordingly, each proof of $\Phi$ from $\Gamma$ in $\FPL$ can be translated into a proof of $\Phi^*$ in the logic  {\L}$_\Delta^+$  from the set of (propositional) formulas $\Gamma^* \cup AX^*$, where $AX^*$ is the set of all translated instances of axioms $(S1)$-$(S4)$. And viceversa, every proof of $\Phi^*$ from $\Gamma^* \cup AX^*$ in  {\L}$_\Delta^+$ gives rise to a proof of $\Phi$ from $\Gamma$ in $\FPL$. In other words, $\Gamma \vdash_{\FPL} \Phi$ iff $\Gamma^* \cup AX^* \vdash_{\textrm{\L}_\Delta^+} \Phi^*$. Note that, independently of whether $\Gamma$ is finite or not, $AX^*$ is infinite.  But now, since   {\L}$_\Delta^+$ is strongly complete \cite[Theorem 4]{Montagna}, $\Gamma^* \cup AX^* \vdash_{\textrm{\L}_\Delta^+} \Phi^*$ iff $\Gamma^* \cup AX^* \models_{\textrm{\L}_\Delta^+} \Phi^*$. 

Finally, we can check that  {\L}$_\Delta^+$-evaluations that are model of $AX^*$ are clearly in one-one correspondence with states on non-modal formulas. Namely, if $e$ is a  {\L}$_\Delta^+$-evaluation validating the translations of all instances of axioms (P1)--(P4), then the map $\sigma: Fm \to [0, 1]$ defined as $\sigma(\varphi) = e((P\varphi)^*)$ is state on product logic formulas, in the sense as defined above.  
In other words, $\Gamma^* \cup AX^* \models_{\textrm{\L}_\Delta^+} \Phi^*$ iff $\Gamma \models_{\FPL} \Phi$. This completes the proof. 
\end{proof}

\section{Conclusions and future work}

In this paper we have defined and studied the notion of state for free product algebras, i.e., the Lindenbaum-Tarski algebras of product logic. We may recall that Product logic is the third formalism which, together with \luk\ and G\"odel logics, stands at the ground of all continuous t-norm based logics, since any continuous t-norm can be obtained as an ordinal sum  of isomorphic copies of G\"odel t-norm (i.e. the minimum t-norm),   \luk\ t-norm and product t-norm. 

Our main result is a Kroupa-Panti-like representation for states. In other words we have proved that  our axiomatization of states captures the Lebesgue integral of product functions with respect to regular Borel probability measures and the relation between states and  measures is one-one. That result, besides supporting the appropriateness of our axiomatization, has several interesting consequences as welcome side effects. First of all, when studying the geometric properties of the state space,  it allows us to fully characterize extremal states in terms of $[0,1]$-valued product homomorphisms which, in turn, correspond one-one to Dirac measures on the space of extremal states. Furthermore the integral representation theorem shows that, for every natural number $n$, the state space of the free $n$-generated product algebra is affinely isomorphic to the state space of the free $n$-generated MV-algebra.  

In the last section of this paper, in order to point out the close relation between states and probabilistic logic, we  introduced a 
modal-like fuzzy logic obtained by the combination of product logic and a suitable expansion of \luk\ calculus  to reason about the probability of product logic events. The resulting logic turned out to be sound and complete with respect to the intended semantics given by states of free product algebras. 

The paper leaves several interesting open problems for further research. A first future direction clearly concerns with the generalization to the frame of product logic of a coherence (no-Dutch-book) criterion \emph{\`a la} de Finetti. In this regard,  the non-finiteness of free product algebras and the discontinuity of product implication, makes the problem of generalizing de Finetti's theorem to this setting non-trivial and hence particularly challenging. However, it is worth pointing out that the results contained in Section \ref{sec:extremal} pave the way for a first step in this direction.

Secondly, as \luk, G\"odel and product logics are the building blocks of H\'ajek logic BL, it is reasonable to think that the integral representation theorem for states of free product algebras, together with its analogous results for MV and G\"odel algebras, and the remarkable functional representation theorem for free BL-algebras \cite{AB}, are the necessary ingredients to shed a light on the problem of providing an appropriate axiomatization for states of free BL-algebras. 

\subsection*{Acknowledgments} The authors acknowledge partial support by the SYSMICS project (EU H2020-MSCA-RISE-2015 Project 689176). Also, Flaminio and Godo acknowledge partial support by the FEDER/MINECO project TIN2015-71799-C2-1-P.

 \appendix
\section{Proofs}\label{sec:proofs}

\subsection*{\bf Proof of Proposition \ref{lemma:tauQ}}

\begin{proof}
Let us start showing that $\tau_{\epsilon}^q$ is monotone.

Let $g, g' \in \mathscr{L}^q_\epsilon(n)$, with $g \leq g'$. In order to prove the monotonicity of $\tau_{\epsilon}^q$, we will show that for each $h \in \mathscr{L}_\epsilon(n)$ such that $h_{\rest G^q_\epsilon} = g'$ we can find  $k \in \mathscr{L}_\epsilon(n)$ such that 
\begin{equation}\label{eqProof1}
k_{\rest G^q_\epsilon} = g\mbox{ and }k \leq h. 
\end{equation}
Thus the claim will follow from the definition of $\tau_{\epsilon}^q$ and the monotonicity of $ \tau_{\epsilon}$. Let hence $h_{\rest G^q_\epsilon} = g'$ and let $l\in \mathscr{L}_\epsilon(n)$ which extends $g$. Thus, let $k=h\wedge l$. Clearly $k\in \mathscr{L}_\epsilon(n)$ and (\ref{eqProof1}) holds.

Now, we prove the linearity of $\tau_{\epsilon}^q$. 
First, we prove that if $h \in \mathscr{L}_\epsilon(n)$ is such that $h_{\rest G^q_\epsilon} = g + g'$, for some $g, g' \in \mathscr{L}^q_\epsilon(n)$, then there are $z, z' \in \mathscr{L}_\epsilon(n)$ such that $z_{\rest G^q_\epsilon} = g, z'_{\rest G^q_\epsilon} = g'$, and $h = z + z'$. Since $g\leq h_{\rest G^q_\epsilon}$, from the previous point, we know there is a $z\leq h$ and extending $g$. Thus, let $z'=h-z$.
Hence, $\tau_{\epsilon}^{q} (g + g')= \inf\{ \tau_{\epsilon}(h) \mid h_{\rest G^q_\epsilon} = g + g'\} = \inf\{ \tau_{\epsilon}(z + z') \mid z_{\rest G^q_\epsilon} = g, z'_{\rest G^q_\epsilon} = g'\}=\inf\{ \tau_{\epsilon}(z) + \tau_{\epsilon}(z') \mid z_{\rest G^q_\epsilon} = g, z'_{\rest G^q_\epsilon} = g'\}$, where the last equality follows by the linearity of $\tau_\epsilon$. 

Thus, we shall now prove the following: 
$\inf\{ \tau_{\epsilon}(z) + \tau_{\epsilon}(z') \mid z_{\rest G^q_\epsilon} = g, z'_{\rest G^q_\epsilon} = g'\} = \inf\{ \tau_{\epsilon}(z) \mid z_{\rest G^q_\epsilon} = g\} + \inf\{ \tau_{\epsilon}(z') \mid z'_{\rest G^q_\epsilon} = g'\}.$
 Since one inequality is obviously valid, we are left to prove that $$\inf\{ \tau_{\epsilon}(z) + \tau_{\epsilon}(z') \mid z_{\rest G^q_\epsilon} = g, z'_{\rest G^q_\epsilon} = g'\} \leq \inf\{ \tau_{\epsilon}(z) \mid z_{\rest G^q_\epsilon} = g\} + \inf\{ \tau_{\epsilon}(z') \mid z'_{\rest G^q_\epsilon} = g'\}.$$
 As to prove this claim, it suffices to notice that for any $z$ such that $z_{\rest G^q_\epsilon} = g$ and $z'$ such that $z'_{\rest G^q_\epsilon} = g'$ it is always possible to find a $\hat z  = \hat z + \hat z'$ where $\hat z \leq z, \hat z' \leq z'$, with $\hat z_{\rest G^q_\epsilon} = g$, $\hat z'_{\rest G^q_\epsilon} = g'$. Thus, being $\tau_{\epsilon}$ monotone, $\tau_{\epsilon}(\hat z) + \tau_{\epsilon}(\hat z') \leq \tau_{\epsilon}(z) + \tau_{\epsilon}(z')$, and the  claim is  settled.

 In a very similar way, we can show that $\tau_{\epsilon}^{q} (\lambda z) = \lambda \tau_{\epsilon}^{q}(z)$. Thus,  $\tau_{\epsilon}^{q}$ is linear.

Finally, in order to conclude the proof,  let $q_2 \leq q_1$ and let $g'\in \mathscr{L}^{q_2}_\epsilon(n)$ extending $g\in \mathscr{L}^{q_1}_\epsilon(n)$. Then, $\tau_{\epsilon}^{q_{1}} (g) \leq \tau_{\epsilon}^{q_{2}}(g')$  from the very definition of $\tau_\epsilon^q$.
\end{proof}
$$
\ast
$$

\subsection*{\bf Proof of Lemma \ref{lemmaSigmaEpsilon}}
\begin{proof}
The fact that $\sigma^q_{\epsilon}$ is a positive functional on $\mathscr{C}(G_\epsilon^q)$ follows by the very definition. In order to prove that $\sigma^q_{\epsilon}$ is monotone, let $c, c'\in \mathscr{C}(G^q_\epsilon)$ and assume $c\leq c'$. The following holds:
\begin{fact}\label{factX4}
For each sequence $\{g_1, g_2,\ldots\} \nnearrow\! c$, there is a sequence $\{g_1',g_2',\ldots\} \nnearrow\! c' $ and and index  $i_0$ such that, for every $i\geq i_0$, $g'_i\geq g_i$. 
\end{fact}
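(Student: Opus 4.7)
My plan is to construct $\{g'_i\}$ in two steps: first produce a base increasing sequence in $\mathscr{L}^q_\epsilon(n)$ converging uniformly to $c'$ from below via Proposition \ref{fact3}, then modify it to dominate $\{g_i\}$ eventually. For the first step, Proposition \ref{fact3} yields a sequence $\{\tilde h_i\}\subseteq\mathscr{L}^q_\epsilon(n)$ with $\tilde h_i\le c'$ and $\|\tilde h_i-c'\|_\infty\to 0$. Extracting a subsequence with $\|\tilde h_i-c'\|_\infty\le 2^{-i-1}$ and subtracting suitable decreasing constant functions $\alpha_i\in \mathscr{L}^q_\epsilon(n)$ (with $\alpha_i-\alpha_{i+1}$ chosen larger than the oscillation $\|\tilde h_{i+1}-\tilde h_i\|_\infty$) produces an increasing sequence $\{h_i\}\nnearrow c'$ in $\mathscr{L}^q_\epsilon(n)$.

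Setting $\delta_i=\|c'-h_i\|_\infty\to 0$, the triangle inequality together with $g_i\le c\le c'$ gives, at every $x\in G^q_\epsilon$,
\[
h_i(x)-g_i(x)=\bigl(h_i(x)-c'(x)\bigr)+\bigl(c'(x)-c(x)\bigr)+\bigl(c(x)-g_i(x)\bigr)\geq -\delta_i+\bigl(c'(x)-c(x)\bigr).
\]
In the easy case $\min_{G^q_\epsilon}(c'-c)>0$, picking $i_0$ so that $\delta_{i_0}<\min(c'-c)$ yields $h_i\ge g_i$ pointwise for all $i\ge i_0$, and $g'_i:=h_i$ is the required sequence.

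The main obstacle is the general case, where $c$ and $c'$ may coincide on a nontrivial subset of $G^q_\epsilon$: near such points the naive choice $g'_i=h_i$ may fail, while the natural lattice fix $g_i\vee h_i$ need not belong to $\mathscr{L}^q_\epsilon(n)$, which is only a linear span and is not closed under pointwise max. My plan to handle this is to apply Proposition \ref{fact3} separately to the nonnegative continuous function $c'-g_i$ on the compact set $G^q_\epsilon$ to obtain $d_i\in\mathscr{L}^q_\epsilon(n)$ with $d_i\le c'-g_i$ and $\|d_i-(c'-g_i)\|_\infty\to 0$, and then to set $g'_i:=g_i+d_i\in\mathscr{L}^q_\epsilon(n)$, so automatically $g'_i\le c'$ and $g'_i\to c'$ uniformly. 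Enforcing $d_i\ge 0$ pointwise (which is what guarantees $g'_i\ge g_i$) jointly with monotonicity of $\{g'_i\}$ is the delicate step; I plan to arrange it by approximating slightly shifted targets of the form $(c'-g_i)+\eta_i$ (with small positive constants $\eta_i$) with precision smaller than $\eta_i$, so that the resulting approximant is strictly positive, and then absorbing the small excess over $c'$ by further decreasing constant shifts and a diagonal selection, in the same spirit as the monotonization performed for $\{h_i\}$ above.
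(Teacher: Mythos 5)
The paper's own proof is a one-liner: pick any $\{g^*_1,g^*_2,\ldots\}\nnearrow c'$ and set $g'_i:=g_i\vee g^*_i$, so that $g'_i\geq g_i$ for every $i$, $g'_i\leq c'$ (since $g_i\leq c\leq c'$), and $g^*_i\leq g'_i\leq c'$ forces uniform convergence to $c'$. Your first step (monotonizing the output of Proposition \ref{fact3} by subtracting a decreasing sequence of constants) is correct, and your disposal of the case $\min_{G^q_\epsilon}(c'-c)>0$ is fine. Your worry that the lattice join need not stay inside $\mathscr{L}^q_\epsilon(n)$ is also legitimate: already for $n=1$ the span restricted to $G^q_\epsilon=[q,1]$ consists of ordinary polynomials, which are not closed under pointwise maximum, so you have in fact put your finger on the very point the paper's one-line argument glosses over.

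However, your substitute for the general case does not close the gap, and the step you flag as ``delicate'' is precisely where it fails. Suppose $x_0\in G^q_\epsilon$ satisfies $g_i(x_0)=c(x_0)=c'(x_0)$ for some $i$ (an increasing sequence converging to $c$ from below may touch $c$, e.g.\ $g_i=(1-\tfrac1i)x+\tfrac1i x^2$ touches $c(x)=x$ at $x=1$); then $g_j(x_0)=c'(x_0)$ for all $j\geq i$, so no choice of $i_0$ avoids the point. Any admissible $g'_j$ must satisfy $g_j(x_0)\leq g'_j(x_0)\leq c'(x_0)$ (the upper bound is forced because an increasing sequence converging uniformly to $c'$ lies below $c'$), hence $d_j(x_0)=0$ \emph{exactly}. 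Your shift device yields $e_j$ with $c'-g_j< e_j\leq (c'-g_j)+\eta_j$, so $g_j+e_j$ strictly overshoots $c'$ at $x_0$; subtracting a positive constant to repair the upper bound then makes the increment negative at $x_0$ unless the constant happens to equal $e_j(x_0)$, over which Proposition \ref{fact3} gives you no control (and different pinch points would demand different constants). Since $\mathscr{L}^q_\epsilon(n)$ affords no truncation, constant shifts plus a diagonal cannot meet both one-sided constraints at such a point. The entire content of the Fact lies in this touching case, so as it stands your argument proves strictly less than the statement.
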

\begin{proof} (of Fact \ref{factX4}).
Let $\{g_1, g_2,\ldots\} \nnearrow\! c $, and $\{g^*_1, g^*_2,\ldots\} \nnearrow\! c'$. Define, for every $i$, $g_i'=g_i\vee g^*_i$ and easily check that this settles the claim. 
\end{proof}

Thus, we prove that $\sigma^q_{\epsilon}(c)\leq \sigma^q_{\epsilon}(c')$. Indeed, 
$$
\sigma^q_{\epsilon}(c)=\bigvee_{\overline{g}\in Seq(c)}\sigma_{\overline{g}}(c) =\bigvee_{\overline{g}\in Seq(c)}(\bigvee_{i\in \mathbb{N}} \tau^q_\epsilon(g_{i}))
$$
Fact \ref{factX4} ensures that, given a $\{g_{i}\}{\nnearrow\! c}$, there is a $\{r_{i}\}{\nnearrow\! c'}$ and, for every $i\geq i_0$, $g_{i}\leq r_{i}$, and hence, since $\tau^q_\epsilon$ is monotone, $\tau^q_\epsilon(g_{i})\leq \tau^q_\epsilon(r_{i})$.  Whence, for every  $\overline{g}\in Seq(c)$ there is  $\overline{r}\in Seq(c')$ such that $\sigma_{\overline{g}}(c) \leq \sigma_{\overline{r}}(c')$. Therefore 
$$
\sigma^q_{\epsilon}(c)=\bigvee_{\overline{g}\in Seq(c)} \sigma_{\overline{g}} (c) \leq \bigvee_{\overline{r}\in Seq(c')} \sigma_{\overline{r}}(c')=\sigma^q_{\epsilon}(c')
$$
showing that $\sigma^q_{\epsilon}$ is monotone.

Now, it is left to show that $\sigma^q_{\epsilon}$ is linear. To this end let us begin with the following claims:
\begin{fact}\label{factX5}
For every $c, c'\in \mathscr{C}(G^q_\epsilon)$ and for every $\lambda\in\mathbb{R}$, the following hold
\vspace{.2cm}

\noindent(1) For each $\{t_1,t_2,\ldots\}{\nnearrow\! c+c'}$, there are $\{a_1,a_2,\ldots\}{\nnearrow\! c}$ and $\{a_1', a_2', \ldots\}{\nnearrow\! c'}$ such that, for every $i$, $t_i=a_i+a_i'$.
\vspace{.2cm}

\noindent(2) For each $\{t_1,t_2,\ldots\}{\nnearrow\! \lambda c}$, there is $\{a_1,a_2,\ldots\}{\nnearrow\! c}$  such that, for every $i$, $t_i=\lambda a_i$.
\end{fact}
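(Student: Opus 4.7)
The plan is to establish parts (1) and (2) separately, with (2) being algebraic and (1) requiring an inductive construction. Throughout I use that $\mathscr{L}^q_\epsilon(n)$ is closed under sums, differences and scalar multiplication, and that by Proposition~\ref{fact3} every $c\in\mathscr{C}(G^q_\epsilon)$ admits uniformly convergent approximations from below in $\mathscr{L}^q_\epsilon(n)$; after telescoping into partial sums of successive increments, such approximations can be taken nondecreasing, so that $Seq(c)$ is always nonempty.

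For (2), the relevant case is $\lambda>0$ (the only one actually needed for the monotone extension of $\sigma^q_\epsilon$). Setting $a_i:=t_i/\lambda$, each $a_i$ lies in $\mathscr{L}^q_\epsilon(n)$ by closure under scalar multiplication, $\{a_i\}$ is nondecreasing because $\{t_i\}$ is and $1/\lambda>0$, and $a_i\to c$ uniformly follows by dividing $t_i\to\lambda c$ by $\lambda$. The equality $\lambda a_i=t_i$ is automatic. The degenerate case $\lambda=0$ and the sign-negative case are not used in the subsequent linearity argument for $\sigma^q_\epsilon$ and are handled separately by symmetry in the ambient proof of Lemma~\ref{lemmaSigmaEpsilon}.

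For (1), my strategy is to anchor one side of the decomposition to a preselected approximation of $c$. Fix $\{b_j\}\subseteq\mathscr{L}^q_\epsilon(n)$ with $b_j\leq c$, $b_j\nnearrow c$, and (by thinning) $\|b_{j+1}-b_j\|_\infty<2^{-j}$. Define $a_i:=b_{j(i)}$ and $a'_i:=t_i-b_{j(i)}$, where the nondecreasing index function $j(i)$ is constructed inductively: advance $j(i)$ to $j(i)+1$ only at those steps $i$ where the pointwise inequality $b_{j(i)+1}-b_{j(i)}\leq t_{i+1}-t_i$ holds on $G^q_\epsilon$, and set $j(i+1):=j(i)$ otherwise. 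Both $a_i$ and $a'_i$ lie in $\mathscr{L}^q_\epsilon(n)$; the identity $t_i=a_i+a'_i$ is automatic; and provided $j(i)\to\infty$, the convergences $a_i\to c$ and $a'_i\to c'$ follow from $b_j\to c$ and $t_i\to c+c'$. Monotonicity of $\{a_i\}$ is immediate, and the inductive choice of $j(i)$ is precisely what secures the pointwise monotonicity of $\{a'_i\}$.

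The main obstacle is verifying that $j(i)\to\infty$, i.e.~that the opportunity to advance $j$ arises infinitely often; individual $t$-increments may shrink to zero, so no single step can be relied upon to dominate $b_{j(i)+1}-b_{j(i)}$ pointwise. The key ingredients to resolve this are the uniform smallness $\|b_{j+1}-b_j\|_\infty<2^{-j}$, which makes the defining inequality a weak condition, together with the uniform convergence $t_i\to c+c'$: the residual $c-b_{j(i)}$ on the anchor side is vanishingly small while the remaining $t$-growth $(c+c')-t_i$ on the target side is pointwise available, so cumulating enough $t$-increments eventually provides, at some individual step, the pointwise domination required. If necessary one re-thins $\{b_j\}$ dynamically to make the next candidate increment arbitrarily small, ensuring the induction never stalls. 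This produces the claimed decomposition.
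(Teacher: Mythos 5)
Your part (2) coincides with the paper's proof (divide by $\lambda$), and for part (1) you have correctly isolated the point that the paper's own one-line argument glosses over: the paper simply takes an arbitrary $\{a_i\}\nnearrow\! c$ and sets $a_i':=t_i-a_i$, which gives uniform convergence $a_i'\to (c+c')-c=c'$ but does not make $\{a_i'\}$ increasing. Your lazy-advancement construction is designed to repair exactly that, but it does not close: everything hinges on the claim that $j(i)\to\infty$, and that claim fails. The advancement condition asks for the \emph{pointwise} domination $b_{j(i)+1}-b_{j(i)}\leq t_{i+1}-t_i$ on all of $G^q_\epsilon$ at a \emph{single} step. Since $t_i\to c+c'$ uniformly, $\|t_{i+1}-t_i\|_\infty\to 0$; so once $j$ has stalled at a value $J$ with $b_{J+1}\neq b_J$, choose $x_0$ with $(b_{J+1}-b_J)(x_0)=\delta>0$: for all large $i$ one has $(t_{i+1}-t_i)(x_0)<\delta$ and the condition fails at every subsequent step. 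Dynamic re-thinning cannot rescue this, because any admissible replacement $b'$ with $b_J\leq b'\leq c$ and $b'\neq b_J$ is again strictly positive minus $b_J$ somewhere, and sup-norm smallness of $b'-b_J$ is not pointwise domination by a nonnegative function that may vanish on part of $G^q_\epsilon$ (in the extreme, $t_{i+1}-t_i$ may be identically $0$).

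This obstruction is not an artifact of your method. Read literally, with \emph{both} output sequences required to be increasing, statement (1) is false in degenerate cases: if $c+c'\in\mathscr{L}^q_\epsilon(n)$, take the constant sequence $t_i\equiv c+c'$; then $a_i+a_i'$ constant with both summands nondecreasing forces both summands to be constant, hence $c=a_1\in\mathscr{L}^q_\epsilon(n)$, which need not hold. So no construction can secure monotonicity of both $\{a_i\}$ and $\{a_i'\}$. The workable route is the paper's: accept that $\{a_i'\}$ is only uniformly convergent (not necessarily monotone), and check that the downstream use in the proof of Lemma \ref{lemmaSigmaEpsilon} survives, since $\tau^q_\epsilon$ is positive, linear and defined on a space containing the constants, hence $\|\cdot\|_\infty$-continuous, so $\tau^q_\epsilon(a_i')$ still converges to the right value. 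You should either adopt that weaker formulation or explicitly weaken the statement; as written, your induction stalls and the proof of (1) is incomplete.
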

\begin{proof}(of Fact \ref{factX5}). 
(1) Let $\{t_1,t_2,\ldots\}$ be as given by hypothesis  and let $\{a_1,a_2,\ldots\}$ be any sequence converging to $c$. Then let, for every $i$,  $a_i'=t_i-a_i\in \mathscr{L}^q_\epsilon(n)$. Thus, $\{a'_1,a'_2,\ldots\}{\nnearrow\! (c+c')-c}$, that is $\{a'_1,a'_2,\ldots\}{\nnearrow\! c'}$ and this settles the claim.
\vspace{.2cm}

\noindent(2) Let $\{t_1,t_2,\ldots\}$ as in the hypothesis  and since $\lambda\neq 0$ put, for every $i$, $a_i=t_i/\lambda$. Thus, $\{a_1,a_2,\ldots\}{\nnearrow\! \lambda c/\lambda}$, that is,  $\{a_1,a_2,\ldots\}{\nnearrow\! c}$.
\end{proof}   

Now we prove that $\sigma^q_\epsilon$ is linear. Let $c, c'\in \mathscr{C}(G^q_\epsilon)$. Then 
$$
\sigma^q_\epsilon(c+c')= \bigvee_{\overline{t}\in Seq(c+c')} \sigma_{\overline{t}} (c+c').
$$
Fact \ref{factX5}(1) shows that, for each $\{t_1,t_2,\ldots\}{\nnearrow\! (c+c')}$, we can find $\{a_1,a_2,\ldots\}{\nnearrow\! c}$ and $\{b_1, b_2, \ldots\}{\nnearrow\! c'}$ such that, for every $i$, $t_i=a_i+b_i$. Thus, $\sigma_{\overline{t}}(c+c')=\bigvee_{i\in\mathbb{N}}\tau^q_\epsilon(a_i+b_i)$ and since $\tau^q_\epsilon$ is linear, $\tau^q_\epsilon(a_i+b_i)=\tau^q_\epsilon(a_i)+\tau^q_\epsilon(b_i)$. Thus,
$$
\begin{array}{lll}
\sigma_{\overline{t}}(c+c')&=&\bigvee_{i\in \mathbb{N}} \tau^q_\epsilon(a_i)+\tau^q_\epsilon(b_i)\\
&=&\lim_{i\in \mathbb{N}}\tau^q_\epsilon(a_i)+\tau^q_\epsilon(b_i)\\
&=&\lim_{i\in \mathbb{N}}\tau^q_\epsilon(a_i)+ \lim_{i\in \mathbb{N}}\tau^q_\epsilon(b_i)\\
&=&\sigma_{\overline{a}}(c)+\sigma_{\overline{b}}(c'),
\end{array}
$$ 
where the previous limits exist because every sequence $\{a_i\}$ and $\{a_i'\}$ is bounded by $c$ and $c'$ which are continuous functions and $\{a_i\}$ and $\{a_i'\}$ converge to $c$ and $c'$ on the compact set  $G^q_\epsilon$. 
In a similar way, we can prove that 
$$
\begin{array}{lll}
\sigma^q_\epsilon(c+c')&=&  \bigvee  \{\sigma_{\overline{t}} (c+c') \mid  \overline{t}\in Seq(c+c') \} \\ 
&=&  \bigvee  \{\sigma_{\overline{t}} (c+c') \mid  \overline{t} = \overline{a} + \overline{b}, \overline{a}\in Seq(c), \overline{b} \in Seq(c') \}  \\ 
&=&  \bigvee  \{\sigma_{\overline{a}}(c) + \sigma_{\overline{b}}(c') \mid \overline{a}\in Seq(c), \overline{b} \in Seq(c') \}  \\ 
 &=& \bigvee_{\overline{a}\in Seq(c')} \sigma_{\overline{a}} (c) +  \bigvee_{\overline{b}\in Seq(c')} \sigma_{\overline{b}} (c') \\ 
&=&\sigma^q_\epsilon(c)+\sigma^q_\epsilon(c')
\end{array}
$$

Finally, using a similar argument, but using Fact \ref{factX5}(2), $\sigma_\epsilon^q(\lambda c)=\lambda \sigma_\epsilon^q(c)$ so proving that $\sigma_{\epsilon}^q$ is linear.

In order to conclude the proof, notice that, for each $g\in \mathscr{L}^q_\epsilon(n)$, the constant sequence $\{g\}$ belongs to $Seq(g)$, and for any other sequence $\overline{t} = \{t_1, t_2 \ldots\}{\nnearrow\! g}$ we have $t_i\leq g$, whence 
$$
\sigma^q_\epsilon(g)=\bigvee_{\overline{t} \in Seq(g)}(\bigvee_{i\in \mathbb{N}} \tau^q_\epsilon(t_i))=\bigvee_{i\in \mathbb{N}} \tau^q_\epsilon(g)=\tau^q_\epsilon(g).
$$  
\end{proof}
\begin{center}
$\ast$
\end{center}
\subsection*{\bf Proof of Lemma \ref{lemma:riesz}}

\begin{proof}
Let, for every $q\in \mathbb{Q}$,  
 $\mu_\epsilon^q$ be a Borel measure that provides an integral representation of $\tau_\epsilon^q$ (Corollary \ref{cor:int}). 
Let us define for each $\mu_{\epsilon}^{q}$, the map $\hat\mu_\epsilon^{q}$ over the Borel subset of $G_{\epsilon}$ in the following way: 
$$
\hat\mu_\epsilon^{q}(B) = \mu_{\epsilon}^{q}(B \cap G^q_\epsilon).
$$ 
From Proposition \ref{lemma:tauQ} the sequence $\{\hat\mu_\epsilon^{q}\}$ is increasing and clearly  bounded. Thus, by the Vitali-Hahn-Saks theorem \cite[\S III.10]{Doob}, it converges to a $\sigma$-additive measure $\mu_{\epsilon}$.  Further notice that, by Corollary \ref{cor:BorelMeasure}, for every Borel subset $X$ of $G_\epsilon^q$, 
\begin{equation}\label{eq:borelMu}
\mu_\epsilon(X)=\mu_\epsilon^q(X)=\hat\mu_\epsilon^{q}(X).
\end{equation}

Now, let us define for each $g\in \mathscr{L}_\epsilon(n)$, the function $g_{q}: G_{\epsilon} \to [0,1]$ which equals $g_{\rest G^q_\epsilon}$ over $G^q_\epsilon$ and takes $0$ outside. Observe that each $g_{q}$ is not continuous but it is measurable. Clearly, each sequence $\{g_{q}\}_{q \in \mathbb{Q}}$ is non-decreasing and it converges pointwise to $g$: $\lim_{q} g_{q}(x) = g(x)$, for every $x \in G_{\epsilon}$. Then, by Levi's theorem (cf. \cite[\S 30, Theorem 2]{KoF}), 
$$
\lim_{q} \int_{G_{\epsilon}} g_{q} {\rm d}\mu_{\epsilon} = \int_{G_{\epsilon}} g {\rm d}\mu_{\epsilon}.
$$
Finally, observe that 
$$
\tau_{\epsilon}^{q}(g_{\rest G^q_\epsilon}) =  \int_{G^q_\epsilon} g_{\rest G^q_\epsilon} {\rm d}\mu_{\epsilon}^{q} =  \int_{G_{\epsilon}} g_{q} {\rm d}\hat\mu_\epsilon^{q},
$$ 
and also, by the definition of $\tau_{\epsilon}^{q}$ and Proposition \ref{lemma:tauQ}, $\lim_{q}\tau_{\epsilon}^{q}(g_{\rest G^q_\epsilon}) = \tau_{\epsilon}(g)$.
Thus,  $$\tau_{\epsilon}(h) = \lim_{q}\tau_{\epsilon}^{q}(g_{\rest G^q_\epsilon}) = \lim_{q}  \int_{G_{\epsilon}} g_{q} {\rm d}\hat\mu_\epsilon^{q} = \lim_{q} \int_{G_{\epsilon}} g_{q} {\rm d}\mu_{\epsilon} = \int_{G_{\epsilon}} g {\rm d}\mu_{\epsilon},$$
where the third equality follows from (\ref{eq:borelMu}) recalling that $g_q(y)=0$ for each $y\in G_\epsilon\setminus G_\epsilon^q$.
\end{proof}
$$
\ast
$$

\subsection*{\bf Proof of Proposition \ref{prop:closed}}
\begin{proof}
Let $\{s_i\}_{i\geq 0}$ be a sequence of states of $\FP(n)$ such that $\lim_{i \in \mathbb{N}} s_i = s$ exists, and let us prove that such $s$ is a state. Condition $S1$ of Definition \ref{def-State} is clearly verified. Let us show that $s$ respects condition $S2$. We need to prove that $s(f\lor g) = s(f) + s(g) - s(f \land g)$. Being each $s_n$ a state, we have that: $$\displaystyle\lim_{i \in \mathbb{N}} s_n (f \lor g) = \displaystyle\lim_{i \in \mathbb{N}} (s_n(f) + s_n(g) - s_n(f \land g))$$ and also, it clearly holds that: $$\displaystyle\lim_{i \in \mathbb{N}} (s_n(f) + s_n(g) - s_n(f \land g)) = \displaystyle\lim_{i \in \mathbb{N}} s_n (f) + \displaystyle\lim_{i \in \mathbb{N}} s_n(G) - \displaystyle\lim_{i \in \mathbb{N}} s_n(f \land g),$$ thus the claim directly follows.
It is easy to prove condition $S3$, since given $f, g \in \FP(n)$, if $f \leq g$ then $s_n(f) \leq s_n(g)$ for every $n \in \mathbb{N}$. Thus, it follows that: $$s(f) = \displaystyle\lim_{i \in \mathbb{N}}s_n(f) \leq \displaystyle\lim_{i \in \mathbb{N}}s_n(g) = s(g).$$
Let us finally prove $S4$. Let $f \in \FP(n), f \neq 0$, such that $s(f) = 0$. We shall prove that $s(\neg\neg f) = 0$.  Let $\supp(f)=\{x\in [0,1]^n\mid f(x)>0\}$. Then $\supp(f)$ is a union of $G_\epsilon$'s, whence it is a Borel subset of $[0,1]^n$. This observation, with Corollary \ref{cor:main}, imply that:
$$
s(f)=\lim_i\int_{[0,1]^n}f\;{\rm d}\mu_i=\lim_i\int_{\supp(f)}f\;{\rm d}\mu_i=0.
$$

\begin{fact}\label{claimm}
If $\lim_i\int_{\supp(f)}f\;{\rm d}\mu_i=0$ then $\lim_i\mu_i(\supp(f))=0$.
\end{fact}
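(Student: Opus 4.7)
The plan is to decompose $\supp(f)$ and control each piece separately. By Theorem~\ref{thm:repProd}, $\supp(f)$ is the disjoint union $\bigcup_{\epsilon \in I} G_\epsilon$ where $I = \{\epsilon \in \Sigma : f_\epsilon > 0\}$ is \emph{finite}, so it will suffice to prove $\lim_i \mu_i(G_\epsilon) = 0$ for each $\epsilon \in I$.

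Fixing such an $\epsilon$, for each rational $q \in (0, 1]$ the restriction $f_\epsilon$ is continuous and strictly positive on the compact set $G_\epsilon^q$, hence attains a minimum $c_{\epsilon, q} > 0$ there. This yields
$$c_{\epsilon, q} \, \mu_i(G_\epsilon^q) \leq \int_{G_\epsilon^q} f \, {\rm d}\mu_i \leq \int_{\supp(f)} f \, {\rm d}\mu_i,$$
whose right-hand side tends to $0$ by hypothesis, so $\lim_i \mu_i(G_\epsilon^q) = 0$ for every fixed $q$. By the regularity of each $\mu_i$ together with Remark~\ref{rem:nested}, one has $\mu_i(G_\epsilon) = \sup_q \mu_i(G_\epsilon^q)$, reducing the task to the limit exchange $\lim_i \sup_q \mu_i(G_\epsilon^q) = \sup_q \lim_i \mu_i(G_\epsilon^q) = 0$.

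The hard part will be precisely this exchange: \emph{a priori} the mass of $\mu_i$ could concentrate on $G_\epsilon \setminus G_\epsilon^q$, where $f_\epsilon$ is uniformly small, so that $\mu_i(G_\epsilon)$ stays bounded away from $0$ even while $\int_{G_\epsilon} f \, {\rm d}\mu_i$ still tends to $0$ (the one-dimensional example $\mu_i = \delta_{1/i}$ with $f(x) = x$ already illustrates this danger). Overcoming this obstacle needs more input than $\int f \, {\rm d}\mu_i \to 0$ alone. Inside the ambient proof of Proposition~\ref{prop:closed} the actual hypothesis is the stronger $s_i \to s$ pointwise on \emph{all} of $\FP(n)$, which provides convergence of $\int g \, {\rm d}\mu_i$ for every $g \in \FP(n)$. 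My plan would be to exploit this by testing against suitable product functions with support close to the ``boundary'' of $G_\epsilon$ (for instance the idempotent $p_\epsilon$ itself, or monomials of the generator-projections $\pi_j \wedge p_\epsilon$) in order to extract a uniform tightness property of $\{\mu_i\}$ along the exhaustion $\{G_\epsilon^q\}_q$, which would then legitimise the limit exchange above and close the argument.
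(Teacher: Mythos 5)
You are right to stop where you do: the limit interchange you isolate is exactly where the difficulty lies, and your counterexample is genuine --- Fact~\ref{claimm} is false as stated. Concretely, take $n=1$, $f=x$, so that $\supp(f)=G_2=(0,1]$, and $\mu_i=\delta_{1/i}$; then $\int_{\supp(f)}f\,{\rm d}\mu_i=1/i\to 0$ while $\mu_i(\supp(f))=1$ for every $i$. The paper's own proof reaches the same intermediate conclusion you reach, namely $\lim_i\mu_i(G^q_\epsilon)=0$ for each fixed $q$ (via the minimum $r>0$ of $f$ on the compact set $G^q_\epsilon$), and then passes to $G_\epsilon$ through the chain $\lim_i\mu_i\bigl(\bigcup_q G^q_\epsilon\bigr)\leq\lim_i\sum_q\mu_i(G^q_\epsilon)=\sum_q\lim_i\mu_i(G^q_\epsilon)=0$. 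That step does not survive scrutiny: the sets $G^q_\epsilon$ are nested, not disjoint, so $\sum_{q\in\mathbb{Q}\cap(0,1]}\mu_i(G^q_\epsilon)$ diverges as soon as some $\mu_i(G^q_\epsilon)>0$, and the interchange of $\lim_i$ with the countable sum is unjustified --- on the example above the chain would read $1\leq\infty=0$. So you have not missed an idea that the paper supplies; the paper performs precisely the illegitimate exchange you flag.

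Your proposed repair, however, cannot succeed either. In the example the $\mu_i=\delta_{1/i}$ are the representing measures of the states $s_i=\varphi_{1/i}$ (evaluations at $1/i$, hence homomorphisms and extremal states), and these converge pointwise on \emph{all} of $\FP(1)$: $s_i(x^m)=i^{-m}\to 0$, $s_i(\neg x)=0$, $s_i(\neg\neg x)=1$, $s_i(x^m\vee\neg x)\to 0$. So the stronger ambient hypothesis of Proposition~\ref{prop:closed} is available and still no tightness of $\{\mu_i\}$ along $\{G^q_\epsilon\}_q$ follows; no choice of test functions in $\FP(n)$ can rule out mass escaping toward the boundary of $G_\epsilon$. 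Note that $\neg\neg f$ is the characteristic function of $\supp(f)$, so Fact~\ref{claimm} is literally the assertion that S4 passes to pointwise limits; the limit functional $s$ above has $s(x)=0$ and $s(\neg\neg x)=1$, violating S4. Consequently $\mathcal{S}(1)$ is not closed in $[0,1]^{\FP(1)}$, and the correct conclusion of your analysis is not a completed proof but the observation that the statement (and with it the S4 part of Proposition~\ref{prop:closed}) needs to be amended rather than re-derived.
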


\begin{proof} (of Fact \ref{claimm}) 
As we already noticed, $\supp (f) = \bigcup_{\epsilon \in \Sigma^{*}} G_{\epsilon}$, for some $\Sigma^{*}\subseteq\Sigma$ .  Thus, if $\lim_{i \in \mathbb{N}} \int_{\supp(f)}f\;{\rm d}\mu_i=0 $, and since the $G_\epsilon$'s are disjoint, the following holds:
$$
\lim_{i \in \mathbb{N}} \int_{\supp(f)}f\;{\rm d}\mu_i=\lim_{i \in \mathbb{N}} \int_{\bigcup_{\epsilon\in \Sigma^*}G_\epsilon}f\;{\rm d}\mu_i =\lim_{i \in \mathbb{N}}  \left(\sum_{\epsilon \in \Sigma^{*}} \int_{G_{\epsilon}} f {\rm d}\mu_i \right)  = \sum_{\epsilon \in \Sigma^{*}} \left( \lim_{i \in \mathbb{N}} \int_{G_\epsilon} f {\rm d}\mu_i\right).
$$ 
Therefore, $\sum_{\epsilon \in \Sigma^{*}} ( \lim_{i \in \mathbb{N}} \int_{G_\epsilon} f {\rm d}\mu_i)=0$, whence
 $\lim_{i \in \mathbb{N}} \int_{G_\epsilon} f {\rm d}\mu_i = 0$ for all $\epsilon \in \Sigma^{*}$.
Now, $G_{\epsilon}= \bigcup_{q \in \mathbb{Q} \cap (0,1]} G^q_{\epsilon}$, and hence 
$$
\int_{G_{\epsilon}} f {\rm d}\mu_i = \sup_{q \in \mathbb{Q} \cap (0,1]} \left(\int_{G^q_\epsilon} f {\rm d}\mu_i \right).
$$ 
Therefore: $$0 = \lim_i \int_{G_e} f {\rm d}\mu_i = \lim_i \sup_{q \in \mathbb{Q} \cap (0,1]} \left(\int_{G^q_e} f {\rm d}\mu_i \right) \geq \sup_{q \in \mathbb{Q} \cap (0,1]}  \lim_i  \left(\int_{G^q_e} f {\rm d}\mu_i\right).$$
Hence, for all $q \in \mathbb{Q} \cap (0,1]$, it follows that $\lim_i  (\int_{G^q_\epsilon} f {\rm d}\mu_i) = 0$. 
But since $G^q_\epsilon$ is compact, and $f$ is strictly positive on it, $\lim_i \mu_i(G^q_e) = 0$. Indeed, let $r=\min\{f(x)\mid x\in G_\epsilon^q\}$, thus $\lim_{i \in \mathbb{N}}  (\int_{G^q_\epsilon} r\, {\rm d}\mu_i) \leq \lim_{i \in \mathbb{N}}  (\int_{G^q_\epsilon} f {\rm d}\mu_i) = 0$. Thus, 
$$
0 = \lim_{i \in \mathbb{N}}  \left(\int_{G^q_\epsilon} r\, {\rm d}\mu_i\right) = r \cdot \lim_{i \in \mathbb{N}} \mu_i(G^q_e),
$$ 
that is, $\lim_{i \in \mathbb{N}}\mu_i(G^q_e) = 0$.
Therefore,
$$ \lim_{i \in \mathbb{N}} \mu_i\left(G_\epsilon\right) = \lim_{i \in \mathbb{N}} \mu_{i} \left(\bigcup_{q \in \mathbb{Q} \cap (0,1]} G^q_{\epsilon}\right) \leq \lim_{i  \in \mathbb{N}}\sum_{q \in \mathbb{Q} \cap (0,1]} \mu_{i}\left(G^q_{\epsilon}\right) =  \sum_{q \in \mathbb{Q} \cap (0,1]}  \lim_{i  \in \mathbb{N}} \mu_{i}\left(G^q_{\epsilon}\right) = 0$$
that is to say, 
 $\lim_{i \in \mathbb{N}} \mu_i(G_\epsilon) = 0$ for all $\epsilon \in \Sigma^{*}$.
%
%
%
%
%
%
%
%
Hence, finally,
$$\displaystyle\lim_{i  \in \mathbb{N}} \mu_i(\supp f) = \lim_{i  \in \mathbb{N}} \sum_{\epsilon \in \Sigma^{*}} \mu_i(G_\epsilon) =  \sum_{\epsilon \in \Sigma^{*}} \lim_{i  \in \mathbb{N}} \mu_i(G_e) = 0.$$
%
%
\end{proof}

Now, since $\supp(f) = \supp(\neg\neg f)$, and $(\neg\neg f) (x) = 1$ for every $x \in \supp(f)$,
%
$$
\begin{array}{lll}
s(\neg \neg f) &=& \displaystyle\lim_{i \in \mathbb{N}}s_i( \neg \neg f)\\  
&=&  \displaystyle\lim_{i \in \mathbb{N}}\displaystyle\int_{[0,1]^n} \neg \neg f\;{\rm d}\mu_i \\
&=&  \displaystyle\lim_{i \in \mathbb{N}}\displaystyle\int_{\supp(\neg\neg f)} \neg \neg f\;{\rm d}\mu_i \\
&=&  \displaystyle\lim_{i \in \mathbb{N}}\displaystyle\int_{\supp(f)} \neg \neg f\;{\rm d}\mu_i \\
&=&  \displaystyle\lim_{i \in \mathbb{N}}\displaystyle\int_{\supp(f)} 1\;{\rm d}\mu_i \\
&=&  \displaystyle\lim_{i \in \mathbb{N}} \mu_i(\supp(f))\\
& =& 0,
\end{array}
$$ 
where the last equality clearly follows from Fact \ref{claimm} above.
\end{proof}


\end{document}